\theoremstyle{plain}
\newtheorem{lemma}{Lemma}[section]
\newtheorem{theorem}[lemma]{Theorem}
\newtheorem{proposition}[lemma]{Proposition}
\newtheorem{corollary}[lemma]{Corollary}
\newtheorem{remark}[lemma]{Remark}
\newtheorem{definition}[lemma]{Definition}
\font\rm=cmr12
\def\pc{\pi_{cusp}}
\def\Z{\mathbb Z}
\def\d{\delta}
\def\r{\rtimes}
\def\t{\times}
\def\o{\otimes}
\def\e{\epsilon}
\def\jp{\text{\rm Jord}(\pi)}
\def\jrp{\text{\rm Jord}_\rho(\pi)}
\def\h{\hookrightarrow}
\def\a{\alpha}
\def\b{\beta}
\def\D{\Delta}
\def\s{\sigma}
\def\ep{\epsilon_\pi}
\def\av{ |\det  |_{_{_{\!F}}}}
\title[On tempered representations]
{On tempered and square integrable representations of classical $p$-adic groups}
\author{Marko Tadi\'c}
\address{Department of Mathematics, University of Zagreb
\\
Bijeni\v{c}ka 30, 10000 Zagreb,
 Croatia\\
Email: \tt tadic{\char'100}math.hr}
\keywords{non-archimedean local fields, classical groups, square integrable representations, tempered representations}
\subjclass[1991]{Primary: 22E50, 22E55, Secondary: 11F70, 11S37}
\thanks{
The   
author was partly supported by 
Croatian Ministry of Science, Education and Sports grant
{\#}037-0372794-2804.}
\date{\today}
\begin{document}

\begin{abstract} This paper has two aims. The first is to give a description of irreducible tempered representations of classical $p$-adic groups which follows naturally the classification of irreducible square integrable representations modulo cuspidal data obtained in \cite{Moe-Ex} and \cite{Moe-T}.  The second aim of the paper is to give description of an invariant (partially defined function)  of   irreducible square integrable representation of a classical $p$-adic group (defined by C. M\oe glin using embeddings) in terms of subquotients of Jacquet modules. 
As an application, we describe behavior of partially defined function in one  construction of square integrable representations of a bigger group from such  representations of a smaller group (which is related to deformation of Jordan blocks of representations).
\end{abstract}

\maketitle

\setcounter{tocdepth}{1}

%\begin{centerline}
%{--------- \ \ \ \  Preliminary version \ \ \ \  ---------}
%\end{centerline}

\tableofcontents

\section{Introduction}\label{intro}

In this paper we
 shall fix a non-archimedean local field $F$ and consider  irreducible tempered and square integrable representations of classical groups over $F$.
 
 First we shall describe parameterization of tempered representations obtained in this paper. These representations are important for a number of reasons (Plancherel measure, non-unitary dual, orbital integrals etc.).

At the beginning, we shall  recall a fundamental result of D. Goldberg  on tempered representations  of a classical group $G$ over $F$ (\cite{G}). Levi factor $M$ of a proper parabolic subgroup $P$ of $G$ is isomorphic to a direct product $GL(n_1,F)\t \dots \t GL(n_k,F)\t G'$, where $G'$ is a classical group from the same series as $G$, whose split rank is smaller then the split rank of $G$ (see section \ref{notation} for more details). 

\begin{theorem} {\rm (D. Goldberg)} Take irreducible square integrable (modulo center) representations $\d_i$ of $GL(n_i,F)$, $i=1,\dots,k$, and  an irreducible square integrable representation  $\pi$ of $G'$. Denote by $l$ the number of non-isomorphic $\d_i$'s such that  the parabolically induced representation 
$$
\text{Ind}^{G_i}(\d_i\o\pi)
$$
of the appropriate classical group $G_i$ reduces. Then  the parabolically induced representation 
\begin{equation}
\label{sub}
\text{Ind}_P^G(\d_1\o\dots\o\d_k\o\pi)
\end{equation}
is a multiplicity one representation of length $2^l$. Further, if $\tau$ is (equivalent to) an irreducible subrepresentation of some representation \eqref{sub} as above, then $\tau$ determines (equivalence class of) $\pi$, and it determines (equivalence classes of) $\d_1,\dots,\d_k$ up to a permutation and taking contragredients\footnote{In the case of unitary groups, one needs to consider Hermitian contragredients}.
\end{theorem}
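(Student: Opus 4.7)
The plan is to apply Harish-Chandra's $R$-group theory combined with Goldberg's explicit computation of $R$-groups for Levi subgroups of classical groups (\cite{G}). Set $\sigma := \delta_1 \otimes \dots \otimes \delta_k \otimes \pi$, an irreducible unitary (in fact tempered) representation of the Levi $M$. Then the induced representation \eqref{sub} is completely reducible, and Harish-Chandra's commuting algebra theorem identifies its endomorphism algebra with the group algebra $\mathbb{C}[R(\sigma)]$ of the $R$-group. Consequently, \eqref{sub} is multiplicity free with exactly $|R(\sigma)|$ irreducible constituents, parameterized by the characters of $R(\sigma)$.

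The task therefore reduces to computing $R(\sigma) \subseteq W(\sigma) \subseteq W(M)$. The little Weyl group $W(M) = N_G(M)/M$ is the semidirect product of the permutation group of the $GL$-factors of equal rank with the group of ``sign changes'' coming from the classical-group embedding: a sign change on the $i$-th $GL$-factor replaces $\delta_i$ by its contragredient (or Hermitian contragredient in the unitary case). Hence $W(\sigma)$ consists of those permutations respecting isomorphism classes of the $\delta_i$'s, paired with sign changes applied only to factors with $\delta_i \cong \tilde\delta_i$.

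Goldberg's computation then identifies $R(\sigma)$ as the subgroup of $W(\sigma)$ on which the relevant Plancherel measures do not vanish. Pure permutations contribute trivially, because for isomorphic square integrable $\delta_i \cong \delta_j$ on the same $GL$, the induced representation $\text{Ind}(\delta_i \otimes \delta_j)$ is irreducible (unlinked segments) and the corresponding intertwining operators are bijective. A sign change on the $i$-th factor survives iff the rank-one induced representation $\text{Ind}^{G_i}(\delta_i \otimes \pi)$ reduces, and only one representative per isomorphism class among the $\delta_i$'s contributes independently. This yields $R(\sigma) \cong (\mathbb{Z}/2\mathbb{Z})^l$, and Knapp-Stein gives length $|R(\sigma)| = 2^l$ together with multiplicity one.

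For the uniqueness statement, I would argue using cuspidal support and Jacquet modules. Any irreducible subquotient $\tau$ of \eqref{sub} has a well-defined supercuspidal support equal to that of $\sigma$, and Frobenius reciprocity forces $\sigma$ (or a $W(M)$-conjugate) to appear in an appropriate Jacquet module of $\tau$. Since $W(M)$ acts only on the $GL$-factors, the $G'$-component $\pi$ is unambiguously recovered (using that $\pi$ is singled out among square integrables of $G'$ by the shape of its cuspidal support appearing in the Jacquet module), while the $\delta_i$'s are determined up to permutation and contragredient. The main obstacle I anticipate is the $R$-group computation itself: isolating the correct sign-change contribution requires a careful rank-one Plancherel-measure analysis specific to classical groups, together with the multiplicativity of Plancherel measures over simple roots of the restricted root system of $M$ in $G$.
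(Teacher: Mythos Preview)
The paper does not give its own proof of this statement: it is quoted in the introduction as a result of Goldberg, with a reference to \cite{G} (and later, in the proof of Proposition~\ref{Pr-def-t}, to Theorem~13.1 of \cite{Moe-T}); the uniqueness part is obtained by invoking Proposition~III.4.1 of \cite{W}. Your sketch is essentially Goldberg's original argument via Harish-Chandra's $R$-group theory and is correct in outline; one small point worth making explicit is that the commuting algebra is a priori a \emph{twisted} group algebra $\mathbb{C}[R(\sigma)]_\eta$, and part of Goldberg's work is to show the $2$-cocycle $\eta$ is trivial for these classical groups (this is what ultimately yields multiplicity one, since $R(\sigma)\cong(\mathbb{Z}/2\mathbb{Z})^l$ is abelian).
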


This result reduces the problem of description of irreducible tempered representations to square integrable representations and tempered reducibilities in the generalized rank one case. The rank one reducibilities are part of the classification of square integrable representations of classical groups modulo cuspidal data in \cite{Moe-Ex} and \cite{Moe-T} (we shall say later more regarding this). The theory of $R$-groups gives a parameterization of irreducible pieces of $\text{Ind}_P^G(\d_1\o\dots\o\d_k\o\s)$ in terms of characters of $R$-groups. In this paper we shall give description of irreducible pieces by parameters coming from the parameters of square integrable representations  of the classification   in \cite{Moe-Ex} and \cite{Moe-T}.

First observe that for parameterizing irreducible pieces of $\text{Ind}_P^G(\d_1\o\dots\o\d_k\o\s)$, it is enough to know to parameterize them  in the case $l=k$ (further tempered parabolical induction is irreducible). Therefore, we shall assume $l=k$ in what follows. For this case,  we have the following simple reduction to the generalized rank one case.

Each representation $
\text{Ind}^{G_i}(\d_i\o\pi)
$ splits into two irreducible non-isomorphic representations. Denote these pieces by $\pi_{\d_i}$ and $\pi_{-\d_i}$, i.e.
\begin{equation}
\label{Eq-pm}
\text{Ind}^{G_i}(\d_i\o\pi)= \pi_{\d_i} \oplus \pi_{-\d_i}
\end{equation}
(later on, we shall come to the problem of parameterizing irreducible pieces of $\text{Ind}^{G_i}(\d_i\o\pi)$). 
  Let $j_1,\dots,j_k\in\{\pm\}$. Then there exists a unique irreducible subrepresentation $\tau$ of $
\text{Ind}_P^G(\d_1\o\dots\o\d_k\o\pi)
$
such that $\tau$ is a subrepresentation of 
$$
\text{Ind}_P^G(\d_1\o\dots\d_{i-1}\o\d_{i+1}\o\dots\d_k\o\pi_{j_i\d_i}),
$$
for each $i=1,\dots,k$. We denote such $\tau$   by
$$
\pi_{j_1\d_1,\dots,j_k\d_k}.
$$

Therefore, to get a parameterization of irreducible tempered representations of classical groups, 
it remains  to determine in \eqref{Eq-pm} which irreducible subrepresentation 
  will be denote  by $\pi_{\d_i}$  (we have two choices; the other irreducible subrepresentation is then denoted by $\pi_{-\d_i}$).  To describe which subrepresentation will be denoted by $\pi_{\d_i}$, we shall briefly recall  the notion of Jordan blocks attached to an irreducible square integrable representation of a classical group (Jordan blocks $Jord(\pi)$ attached to an irreducible square integrable representation $\pi$ of a classical group is one of three invariants which classify 
  irreducible square integrable representations  of a classical groups modulo cuspidal data, and a natural assumption).

Before we recall the definition of Jordan blocks, we shall   recall  some notation for general linear groups.
 Let $\rho$ be an irreducible cuspidal representation of $GL(p,F)$ and let $n$ be a positive integer (we  consider only smooth representations in this paper). Let 
 $$
 \big[\, \rho,\av^n\rho\,\big]\ :\,=\ \{\rho,\av \rho,
\av^2\rho,\dots,\av^n\rho\}
 $$
 ($ |\  \, |_{_{_{\!F}}}$ denotes the normalized absolute value on $F$).
The parabolically
induced representation
$$
\text{Ind}^{GL(np,F)}(\av^n\rho\otimes \av^{n-1}\rho\otimes \dots\otimes \av\rho\otimes \rho),
$$
induced from the appropriate  parabolic subgroup which is standard with respect to the minimal parabolic subgroup of all upper triangular matrices in the group,  contains a unique irreducible subrepresentation. This subrepresentation is  denoted by
$$
\delta( \big[\, \rho,\av^n\rho\,\big])
$$
(the parabolic induction that we consider in this paper is normalized). Then the representation $\delta( \big[\, \rho,\av^n\rho\,\big])$  is an essentially square integrable representation. Denote
$$
\delta( \rho,n):=\delta( \big[\, \av^{-\frac{n-1}2}
\rho,\av^{\frac{n-1}2}\rho\,\big]).
$$

For simplicity,  in the introduction we shall only deal  with symplectic and split special odd-orthogonal groups (for the definition of these groups see  section \ref{notation}). We shall fix one of these series of groups, and denote the group of split rank $n$ in the series by $S_n$.

Let $\pi$ be an irreducible square integrable representation  of $S_q$. In what follows, we shall assume that  a natural hypothesis, called basic assumption, holds (this is (BA) in section \ref{notation}). Fix an irreducible selfdual representation $\rho$ of a general linear group (selfdual means that the contragredient representation $\tilde \rho$ of $\rho$ is isomorphic to $\rho$).
Consider    representations
\begin{equation}\label{Eq-ind}
\text{Ind}^{S_{np+q}}(\delta(  \rho,n)\otimes \pi),
\end{equation}
parabolically induced from  appropriate parabolic subgroups.
Then for one parity of $n$ in $\mathbb Z_{>0}$, the corresponding representations \eqref{Eq-ind} are always irreducible, while for the other parity we have always reducibility, except for finitely many $n$. All the exceptions $n$  are denote by
$$
\jrp.
$$
Then the Jordan blocks of $\pi $ are defined by
$$
\jp=\bigcup_\rho \ \{\rho\}\t\jrp,
$$
when $\rho$ runs over all equivalence classes of  irreducible selfdual cuspidal representations of general linear groups (see section  \ref{notation} for more details).

Let us recall that Jordan blocks are one of the invariants that C. Moeglin has attached in \cite{Moe-Ex} to
 an irreducible square integrable representation $\pi$ of a classical group over $F$.
 To such $\pi$,  she has also attached invarinat
 $$
 \ep,
 $$  called the partially defined function of $\pi$ , and  an irreducible cuspidal representation 
 $$
 \pc
 $$
of a classical group,   called the partial cuspidal support of $\pi$.
  The importance of these invarinats come from the fact that  triples
$$
(\jp,\ep,\pc),
$$ 
 classify irreducible square integrable representations  of  classical groups modulo cuspidal data (and a natural assumption; see \cite{Moe-T}). The definition of the partial cuspidal support will be recalled later  in the introduction.

Since the above invariants classify irreducible square integrable representations, it is important  to know if their definition is canonical. This is the case for $\jp$ and $\pc$. The definition of $\ep$ in \cite{Moe-Ex} is given in terms of embeddings in some cases, and in terms of normalized standard integral intertwining operators in the other cases\footnote{One can find in \cite{T-inv} the definition which does not use the  normalized standard intertwining operators}. The part of the definition  given by embeddings is also canonical. Only the part relaying on normalized standard  intertwining operators  is not canonical, since it depends on the choice of normalization of the operators that one uses in  the definition.
This non-canonical case of the definition occurs precisely when
$Jord_\rho(\pi)$ is a non-empty subset  of   odd integers, while  $Jord_\rho(\pc)=\emptyset$.
The last condition is
equivalent to the fact that 
\begin{equation}
\label{norm-red}
\text{Ind}^{S_{p+q'}}(\rho\otimes\pi_{cusp})
\end{equation}
reduces.

 Let us briefly explain how one can fix a normalization as above. We suppose that \eqref{norm-red} reduces (as was the case above).
Then \eqref{norm-red} reduces into two
nonequivalent irreducible pieces:
\begin{equation}
\label{norm-red-p}
\text{Ind}^{S_{p+q'}}(\rho\otimes\pi_{cusp})=\tau_1\oplus\tau_{-1}
\end{equation}
 (it would be more precise to denote these pieces  by $\tau^{(\rho,\pc)}_1$
and $\tau^{(\rho,\pc)}_{-1}$, but to simplify notation, we   drop the superscripts $(\rho,\pc)$). J.-L. Waldspurger
observed that the normalization of standard intertwining operators which uses C.
M\oe glin  in her definition of $\epsilon_\pi((\rho,a))$  is determined by
the choice of the signs that one attaches to the irreducible pieces in \eqref{norm-red-p}. 

In this paper we work with the classification obtained in \cite{Moe-T}. Therefore, we assume that the normalization of standard intertwining operators in \cite{Moe-Ex} is fixed. This implies that the choice of indexes in \eqref{norm-red-p} is always fixed, when we have situation as above. We shall use this choice of indexes several times in what follows.

 Let us go back to  irreducible square integrable representations  $\pi$ 
of a classical and $\d$ of a general linear group. We shall assume below that 
$$
\text{Ind$^G(\d\o\pi)$}
$$
 reduces. Then $\d$ is selfdual. Therefore, we can find a selfdual irreducible cuspidal representation $\rho$ of a general linear group and $b\in\mathbb Z_{>0}$ such that 
 $$
 \d\cong\d(\rho,b).
 $$
  Now we shall define tempered representation
 $$
 \pi_\d.
 $$
The parabolically induced representations that we  consider in the introduction will be again assumed to be  induced from the appropriate parabolic subgroup, which is  standard with respect to the minimal parabolic subgroup of all upper triangular matrices in the group that we consider. 

  Below, we denote by 
  $$
  \pc
  $$
   the partial cuspidal support of $\pi$ ($\pc$  is the unique irreducible cuspidal representation  of a classical group for which  there exists an irreducible representation $\theta$ of a general linear group such that $\pi\h \text{Ind}(\theta\o\pc)$).

Let  $\tau$ be a representation of a group $G$. Then in the theorem below, $\tau^{\o2}$ will denote the representation $\tau\o\tau$ of the direct product $G\t G$.

\begin{theorem} {\bf  (Definition of $\pi_\d$)} 
Let $\pi$ be an irreducible square integrable representation  of a classical group, 
let $\rho$ be an irreducible selfdual representation  of a general linear group
and let $b$ be a positive integer. Denote
$$
 \d=\d(\rho,b).
 $$
  Assume that 
$$
\text{Ind}^{G}(\delta\otimes \pi)
$$
reduces.
\begin{enumerate}
\item
Suppose
\begin{equation*}
\jrp\cap [1,b]\ne\emptyset.
\end{equation*}
Denote
\begin{equation*}
a
=\max(\jrp \cap[1,b]
).
\end{equation*}
Then there exists a unique irreducible subrepresentation  of Ind$^G(\d\o\pi)$,  denoted by $\pi_\d$,  satisfying
$$
\pi_\d\h \text{Ind}^G(\delta([\nu^{(a-1)/2+1}\rho,\nu^{(b-1)/2}\rho])^{\o2}
\o \lambda)
$$
 for some irreducible representation $\lambda$ of a classical group.  
 
\item
Suppose
\begin{equation*}
\jrp\cap [1,b]=\emptyset.
\end{equation*}

\begin{enumerate}
\item
Let $b$ be even. 
Then there exists a unique irreducible subrepresentation  of Ind$^G(\d\o\pi)$,  denoted by $\pi_\d$,  satisfying
$$
\pi_\d
\h
\text{Ind}^G(\delta([\nu^{1/2}\rho,\nu^{(b-1)/2}\rho])^{\o2}
\o \lambda)
$$
 for some irreducible representation $\lambda$ of a classical group.

\item
Let $b$ be odd.

\begin{enumerate}

\item
Suppose
$$
\jrp\ne\emptyset.
$$
 Denote
$$
a:=\min(\jrp).
$$
Then there exists a unique irreducible subrepresentation  of Ind$^G(\d\o\pi)$,  denoted by $\pi_\d$,  satisfying
$$
\pi_\d
\h
\text{Ind}^G(\delta([\nu\rho,\nu^{(b-1)/2}\rho])^{\o2}\o \delta([\nu\rho,\nu^{(a-1)/2}\rho])
\o \lambda)
$$
 for some irreducible representation $\lambda$  of a classical group.

\item
Suppose
 $$
  \jrp=\emptyset.
  $$
    Then $\rho\r\pc$ reduces. Decompose
\begin{equation}
\label{Eq-not-can}
\rho\r\pc=\tau_1\oplus\tau_{-1}
\end{equation}
into the sum of irreducible (tempered) subrepresentations.
Then there exists a unique irreducible subrepresentation  of Ind$^G(\d\o\pi)$,  denoted by $\pi_\d$,  satisfying
$$
\pi_\d\h \text{Ind}^G(\theta\o\tau_{1}).
$$
for some irreducible representation $\theta$ of a general
linear group.

(Analogously we can define $\pi_{-\d}$ using $\tau_{-1}$ instead $\tau_1$.)

\end{enumerate}
\end{enumerate}
\end{enumerate}
\end{theorem}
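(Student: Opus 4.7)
The plan is to establish existence and uniqueness of $\pi_\d$ in each case by combining M\oe glin's structural results for square integrable representations (from \cite{Moe-Ex}, \cite{Moe-T}) with an analysis of Jacquet modules via the geometric lemma. By Goldberg's theorem, $\text{Ind}^G(\d\o\pi)$ is a multiplicity one representation of length two; I shall write it as $\tau_+\oplus\tau_-$, the sum of two non-isomorphic irreducible tempered subrepresentations. Each clause asserts that precisely one of $\tau_\pm$ admits the stated embedding, and that piece will be declared $\pi_\d$.

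For existence in cases (1), (2)(a), and (2)(b)(i), I would argue by explicit construction. The key segment identity is
$$
\d(\rho,b)\h \delta([\nu^{(a+1)/2}\rho,\nu^{(b-1)/2}\rho])\t\delta([\nu^{-(b-1)/2}\rho,\nu^{(a-1)/2}\rho]),
$$
which supplies one copy of the desired segment on the GL side. Combining this with M\oe glin's embedding of $\pi$ reflecting its Jordan block data (which, when $a\in\jrp$, supplies a matching second copy of the same segment extractable from $\pi$ via a partial Jacquet module), transitivity of parabolic induction together with commutation of the GL slots of the Levi yields an embedding of $\text{Ind}^G(\d\o\pi)$ into $\text{Ind}^G(\delta(\Delta)^{\o2}\o\lambda)$ for an appropriate irreducible $\lambda$. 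Any irreducible subrepresentation of the source then serves as a candidate for $\pi_\d$. Cases (2)(a) and (2)(b)(i) follow the same pattern, with slightly different initial segments ($\nu^{1/2}\rho$ in the even case, and an extra factor $\delta([\nu\rho,\nu^{(a-1)/2}\rho])$ extracted using $a=\min(\jrp)$ in the odd case).

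For uniqueness, I would apply Frobenius reciprocity: the claimed embedding is equivalent to the irreducible $\delta(\Delta)^{\o2}\o\lambda$ (with $\Delta$ the relevant segment) appearing as a quotient of the appropriate Jacquet module of $\pi_\d$. Computing this Jacquet module on all of $\text{Ind}^G(\d\o\pi)$ via the geometric lemma, and invoking M\oe glin's explicit description of Jacquet modules of square integrable $\pi$ in terms of $\jp$, $\ep$ and $\pc$, one should find that the isotypic component of $\delta(\Delta)^{\o2}\o\lambda$ has total length exactly one. Since $\tau_+$ and $\tau_-$ are non-isomorphic, this forces the contribution to lie entirely in a single summand, giving uniqueness.

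The hardest case is (2)(b)(ii), where $\jrp=\emptyset$. Here the Jordan block data of $\pi$ carries no information about $\rho$, so the Jacquet modules of $\tau_\pm$ along $\rho$-segments are symmetric and cannot by themselves distinguish the two pieces --- this is exactly the non-canonical setting behind the normalization of standard intertwining operators recalled earlier. The resolution is to descend to the partial cuspidal support: starting from an embedding $\pi\h \text{Ind}(\theta''\o\pc)$ provided by M\oe glin's classification and commuting GL factors inside $\text{Ind}^G(\d\o\pi)$, one reaches $\text{Ind}^G(\theta'\o\rho\r\pc)$. By the fixed decomposition $\rho\r\pc=\tau_1\oplus\tau_{-1}$ from \eqref{norm-red-p}, the two irreducible pieces of $\text{Ind}^G(\d\o\pi)$ embed into the two respective summands $\text{Ind}^G(\theta'\o\tau_{\pm1})$, and $\pi_\d$ is defined as the one mapping into the $\tau_1$-summand (consistent with the fixed normalization); the requisite uniqueness comes again from a multiplicity-one statement in the Jacquet module, now restricted to the cuspidal block determined by $\pc$.
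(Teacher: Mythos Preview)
Your overall strategy---multiplicity-one in the Jacquet module for uniqueness, and descent to $\pc$ in case (2)(b)(ii)---is the paper's. The gap is in your existence argument for cases (1), (2)(a), (2)(b)(i).

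You claim that after splitting $\d(\rho,b)\h\delta(\Delta)\t\delta(\Delta')$, a ``matching second copy'' of $\delta(\Delta)$ can be extracted from $\pi$ via M\oe glin's structural embeddings. This is false. In case (1), for $\pi$ to have $\delta([\nu^{(a+1)/2}\rho,\nu^{(b-1)/2}\rho])\o\sigma$ as a subquotient of any Jacquet module, Proposition \ref{Pr-jb-jac} would force some element of $\{a+2,\dots,b\}$ into $\jrp$, contradicting $a=\max(\jrp\cap[1,b])$ and $b\notin\jrp$. The same obstruction applies in (2)(a) and (2)(b)(i). So $\pi$ cannot supply the second copy.

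In the paper both copies of $\delta(\Delta)$ come from $M^*(\d(\rho,b))$ alone: in formula \eqref{Eq-M-seg} the choice $-i=j+1=(a+1)/2$ already gives $\delta(\Delta)\t\delta(\Delta)\o\d(\rho,a)$, and pairing with $1\o\pi\leq\mu^*(\pi)$ produces the target term $\delta(\Delta)^2\o\d(\rho,a)\r\pi$ in $\mu^*(\d(\rho,b)\r\pi)$. The same analysis shows that any other index pair (or any nontrivial contribution from $\mu^*(\pi)$) would put a forbidden element into $\jrp$, so the multiplicity is exactly one. Because the GL-side cuspidal support isolates this term, it is a direct summand of the Jacquet module (Bernstein center, section \ref{elem-b-c}), hence a quotient, and Frobenius reciprocity gives the embedding. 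This single computation delivers existence and uniqueness at once; a separate embedding construction is neither needed nor, as above, available.

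Your sketch for (2)(b)(ii) is essentially Lemma \ref{Le-def-odd-1}. The uniqueness there is by contradiction: if both pieces embedded into representations of the form $\theta\r\tau_i$ with the same $i$, then no subquotient of the form $\ast\o\tau_{-i}$ could occur in $\mu^*(\d(\rho,b)\r\pi)$, but one does via the term $\d([\nu\rho,\nu^{(b-1)/2}\rho])^2\t\theta''\o\rho\r\pc$.
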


One can find Jacquet module definition of representations $\pi_\d$ in section \ref{Primitive}.  Let us note that for the above  parameterization of tempered representations,  we did not need to make any new choice besides the choices that we needed to make for the classification of square integrable representations of classical groups (for square integrable representations we needed to make choices of  $\tau_1$ in \eqref{Eq-not-can}, which  in general  are  not canonical).

\bigskip

Now we shall describe the second  aims of this paper.  
The partially defined function $\ep$ (as well as the partial cuspidal support $\pc$) is defined using embeddings. In general, if we have an irreducible representation $\s$ of a reductive group $G$ and an irreducible representation $\tau$ of a Levi factor $M$ of a parabolic subgroup $P$, 
the fact that $\s$ embeds into $\text{Ind}_P^G(\tau)$, i.e.
$$
\s\h\text{Ind}_P^G(\tau),
$$
implies by Frobenius reciprocity that $\tau$ is a quotient of the corresponding Jacquet module of $\s$ with respect to $P$  (the converse also holds). Then, in particular,  $\tau$ is a subquotient of the corresponding Jacquet module of $\s$. On the other side,  the fact that $\tau$ is a subquotient of the corresponding Jacquet module of $\s$, does not  imply  in general the existence of embedding $\s\h\text{Ind}_P^G(\tau)$ (one can see such examples in Remark \ref{Rm-ex}).

Let us recall that we have fairly good control of subqutients of Jacquet modules of parabolically induced representations (through Geometric Lemma of \cite{B-Z}). The question of exact structure of  Jacquet module is usually much more delicate (see \cite{C-int} already for the case of $SL(2,F)$). Therefore, it would be much more convenient to have characterization of partially defined function  in terms of subqutients of Jacquet modules, instead of quotients.

In this paper we show that in the definition of the partially defined function $\ep$, it is enough to require only the subquotient condition instead of the quotient condition of the corresponding Jacquet module (actually, we shall show more; see section \ref{jac-mod-int}). We shall explain this in more detail  below.

Regarding partial cuspidal support $\pc$ of an irreducible (square integrable) representation $\pi$ of a classical group over $F$, it is easy to show (and it is well-known) that one can define $\pc$ requiring only subquotient (instead of quotient) condition (see Proposition \ref{Pr-pcs}).

 To define partially defined function attached to an irreducible square integrable representation $\pi$ of a classical group over $F$ (defined in \cite{Moe-Ex}; see also \cite{T-inv}), it is enough to consider  three cases of the following theorem (the Jacquet modules that we consider below are all with respect to the standard parabolic subgroups; see section \ref{notation} for more details). 
 
 \begin{theorem}
 {\it
\begin{enumerate}
\item
Suppose that $\jrp$ has at least two elements. Take any $a_-,a\in\jrp$ such that $a_-<a$ and $\{b\in\jrp; a_-<b<a\}=\emptyset$.
   Then
$$
\delta([\nu^{(a_--1)/2+1}\rho, \nu^{(a-1)/2}\rho])\o\sigma
$$
 is a subquotient of the appropriate Jacquet module of $\pi$ for some  irreducible representation $\s$, if and only if $\e_\pi((\rho,a))\e_\pi((\rho,a_-))^{-1}=1$\footnote{By the original definition,
$
\e_\pi((\rho,a))\e_\pi((\rho,a_-))^{-1}=1
$
if and only if there exists a representation $\sigma'$ of a classical group such that
$
\pi\h \text{Ind}(\delta([ \nu^{(a_- -1)/2+1} \rho, 
\nu^{(a-1)/2}\rho])\o \s').
$
}.

\item
Suppose $\jrp\cap 2\mathbb Z\ne \emptyset$. Denote
$$
a_{\pi,\min,\rho}=\min(\jrp).
$$
Then $\ep(\rho,a_{\pi,\min,\rho})$ is defined, and it is 1 if and only if  some  irreducible representation of the form $\delta([\nu^{1/2}\rho, \nu^{(a_{\pi,\min,\rho}-1)/2}\rho])\o\sigma$ is a subquotient of the corresponding Jacquet module of $\pi$\footnote{By the original definition, $\ep(\rho,a_{\pi,\min,\rho})=1$  if and only if $\pi\h\text{Ind}(\delta([\nu^{1/2}\rho, \nu^{(a_{\pi,\min,\rho}-1)/2}\rho])\o\sigma')$ for some irreducible representation $\s'$. 
}.

\item
Suppose $\jrp\cap(1+2\mathbb Z)\ne \emptyset$ and $Jord_\rho(\pc)=\emptyset$ (the last condition is equivalent to the fact that $\rho\r\pc$  reduces). Then  $\rho\r\pc$  reduces into two irreducible nonequivalent representations: $\rho\r \pc=\tau_1\oplus\tau_{-1}.$
 Then for any $k\in \mathbb Z_{>0}$ the representation $\text{Ind}(\delta([\nu\rho,\nu^k\rho])\o\tau_i)$, $i\in\{\pm1\}$, has the unique irreducible subrepresentation, denoted by
$$
\delta([\nu\rho,\nu^k\rho]_{\tau_i};\pc).
$$
This subrepresentation is square integrable.

Denote
$$
a_{\pi,\max,\rho}=\max(\jrp).
$$
Then $\ep(\rho,a_{\pi,\max,\rho})$ is defined and    $\ep(\rho,a_{\pi,\max,\rho})=i$  if and only if an irreducible representation of the form   $\theta\o\delta([\nu\rho, \nu^{(a_{\pi,\max,\rho}-1)/2}\rho]_{\tau_i};\pc)$ is a subquotient of the Jacquet module of $\pi$\footnote{By \cite{T-inv}, $\ep(\rho,a_{\pi,\max,\rho})=i$ if and only if
$
\pi\h\text{Ind}(\theta'\o\delta([\nu\rho, \nu^{(a_{\pi,\max,\rho}-1)/2}\rho]_{\tau_i};\pc))
$
for some representation $\theta'$. 
}.

\end{enumerate}
}
\end{theorem}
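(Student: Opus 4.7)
The ``easy direction'' in all three cases is immediate from Frobenius reciprocity: if $\pi$ embeds into a parabolically induced representation $\text{Ind}_P^G(\tau)$, then $\tau$ is a quotient, and in particular a subquotient, of the Jacquet module $r_P(\pi)$. Combined with the embedding characterizations recalled in the three footnotes, this settles one implication in each of the three statements.

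For the substantive direction (the subquotient condition $\Rightarrow$ the $\ep$ condition), my plan is to use the M\oe glin--Tadi\'c classification from \cite{Moe-T} to realize $\pi$ as an irreducible subrepresentation of an induced representation of a specific form, and then to trace the prescribed subquotient through the Bernstein--Zelevinsky geometric lemma applied to this induction. Casselman's square-integrability criterion forces the exponents appearing in $r_P(\pi)$ to lie in a strict positive cone, and this will be used repeatedly to eliminate most summands in the geometric lemma decomposition. Once the subquotient has been localized to a single term, one can read off an embedding of $\pi$ into an induced representation whose inducing data contains the prescribed segment, and the footnoted characterizations then convert this back into the required statement about $\ep$. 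I would organize the argument by induction on the total length of $\jp$, treating the three cases in parallel: in case (1), consecutiveness of $a_-,a$ in $\jrp$ is expected to localize the subquotient uniquely by an admissibility-of-exponents argument, while in case (2), minimality of $a_{\pi,\min,\rho}$ plays the analogous role, preventing a shorter initial segment starting at $\nu^{1/2}\rho$ from being extracted elsewhere in the decomposition.

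The main obstacle is case (3), where the subquotient must match the specific square integrable constituent $\delta([\nu\rho,\nu^{(a_{\pi,\max,\rho}-1)/2}\rho]_{\tau_i};\pc)$, built from a noncanonical choice of $\tau_i$ in the decomposition $\rho\r\pc=\tau_1\oplus\tau_{-1}$. One must show that exactly one value $i\in\{\pm1\}$ can actually produce such a subquotient, and that this value coincides with $\ep(\rho,a_{\pi,\max,\rho})$. Here I would exploit the nonequivalence of $\tau_1$ and $\tau_{-1}$, together with the fixed normalization of standard intertwining operators recalled in the introduction (via Waldspurger's observation), so that the sign read off from the Jacquet module agrees with the sign encoded in $\ep$. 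Controlling the coupling between the segment combinatorics for $\rho$ and the sign at the cuspidal end is the delicate step of the argument.
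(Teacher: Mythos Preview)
Your easy direction is fine. The gap is in the substantive direction: the step ``once the subquotient has been localized to a single term, one can read off an embedding of $\pi$'' is not justified. Localizing a subquotient of $r_P(\pi)$ inside the geometric-lemma decomposition of the Jacquet module of an \emph{ambient} induced representation tells you nothing about whether that term is a \emph{quotient} of $r_P(\pi)$; and a subquotient, even of multiplicity one, need not be a quotient. You have not named any mechanism that upgrades ``subquotient'' to ``quotient'', and this is precisely the point of the theorem.

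The paper's approach is more direct and does not use the classification of $\pi$, induction on $|\jp|$, or any realization of $\pi$ inside a specific induced representation. The key tool is an elementary Bernstein-center observation (Section~\ref{elem-b-c}, specifically Corollary~\ref{Cor-bc}): if $\sigma\otimes\tau$ is an irreducible subquotient of $r_P(\pi)$ with $\sigma$ cuspidal, then already $\pi\hookrightarrow\text{Ind}(\sigma\otimes\tau')$ for some $\tau'$; more generally one gets an embedding with the same infinitesimal character on the $GL$-factor. For parts (1) and (2) the argument is then: pass to a cuspidal Jacquet module by transitivity, apply the Bernstein-center corollary to get $\pi\hookrightarrow\nu^{(a-1)/2}\rho\times\cdots\times\nu^{(a_--1)/2+1}\rho\rtimes\sigma'$, and then upgrade to $\pi\hookrightarrow\theta\rtimes\sigma''$ with $\theta$ having the prescribed cuspidal support. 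Now a short permutation argument shows $\theta$ must be the segment representation itself: if not, one can commute a factor $\nu^{k}\rho$ to the front with $a_-<2k+1<a$, and then Proposition~\ref{Pr-jb-jac} forces $2k+1\in\jrp$, contradicting consecutiveness (resp.\ minimality in part (2)). Casselman's criterion is not used; the Jordan-block constraint from Proposition~\ref{Pr-jb-jac} does all the work. For part (3) the paper simply invokes the end of the proof of Proposition~4.1 in \cite{T-inv}, where it was already shown that if $\epsilon_\pi((\rho,a))=-i$ then no subquotient of the form $\theta\otimes\delta([\nu\rho,\nu^{(a-1)/2}\rho]_{\tau_i};\pc)$ can occur; your plan for (3) is too vague to assess, and the sign-matching is genuinely handled elsewhere.
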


Some other useful descriptions in terms of Jacquet modules of partially defined functions,   are also given in section \ref{jac-mod-int}. They are related to the infinitesimal characters.

After  discussion of the role of Jacquet modules in the definition of the invariant $\e_\pi$ (and $\pc$), we shall very briefly discuss the role of Jacquet modules in determining the  invariant $\jp$.
Jacquet modules in this case do not give complete answers, as they do for the other two invariants. For example, we can have an irreducible cuspidal representation $\pi$ of a classical group which has many Jordan blocks, but we certainly can not detect the Jordan blocks from the Jacquet modules of $\pi$ (since all the proper Jacquet modules of a cuspidal representation are trivial, i.e. zero spaces). Nevertheless, from non-trivial Jacquet modules we can get some information about Jordan blocks. An example is Proposition \ref{Pr-jb-jac} (there are possible further results in that direction, but we do not go in that direction in this paper; one can find such results in \cite{J3}).

 As an application of the Jacquet module interpretation of invariants, we give (an expected) description of partially defined function of an irreducible square  integrable representation when one Jordan block of the representation  is deformed. This is one of two important constructions of square integrable representations from \cite{Moe-Ex}. The other  construction is adding two neighbor Jordan blocks (the description of partially defined function for this case is obtained in \cite{Moe-Ex}; see also  \ref{Th-eps=} of this paper).

We explain very briefly the application that we mentioned  above.
Take an irreducible square integrable representation $\pi$ of a classical group and take an irreducible selfdual cuspidal representation $\rho$ of a general linear group .
Let $a\in\jrp$. Suppose that there exists  an integer $b$ of the same parity as $a$, greater than $a$, which satisfies $[a+1,b]\cap\jrp=\emptyset$. Then the representation
\begin{equation}\label{ext}
 \text{Ind}(\delta([ \nu^{(a+1)/2} \rho, 
\nu^{(b-1)/2}\rho])\o \pi)
\end{equation}
has an irreducible square integrable subrepresentation. Denote it by $\pi'$. Then $\pi$ and $\pi'$ have the same partial cuspidal supports, and we know how to get Jordan blocks of one of the representations from the other representation (see Proposition \ref{Pr-jb-jac}). In Theorem \ref{Th-red} we show how to get partially defined function of one of the representations from the  partially defied function of the other representation.

We describe now the content of the paper.
The notation that we use in this paper is introduced in the second section. The third section recalls  some basic results that we use in the paper, in particular about Jordan blocks. We define basic irreducible tempered representations $\pi_\d$ in the fourth section. The fifth section gives a description of all irreducible tempered representations by the basic ones. 
In the sixth section we present some simple observations on the action of the Bernstein center of a factor of  a direct product of two reductive groups, on the representations of the direct product. 
We apply these simple observations in the seventh  section and obtain the Jacquet module interpretation of invariants. The eighth section deals with connection of partially defined functions of   square integrable representations of  smaller and   bigger classical group.
For the convenience of reader, in the appendix  we bring a proof of a result on irreducibility used in this paper, which follows from the paper \cite{Mu3} of G. Mui\'c.

Let us note that at least some of the results of this paper were known to experts. Nevertheless, we present the proofs of the results for which we did not know  written references. Further, description  of tempered representations in some cases has been obtained and used already by some authors (when they where describing irreducible subquotients of parabolically induced representations; see for example  \cite{T-RPI}, \cite{J1}, \cite{BJ} or \cite{Mu4}).

Some of the main topics of this paper 
(like irreducible tempered or square integrable representations of  classical groups over $F$)  show up as main local objects of the recent book \cite{A} of J. Arthur. It would be very important to understand explicitly the relation between the
fundamental classification in \cite{A}
and the problems studied in our paper.
 We do not go in that direction in this paper. We shall say only a few words regarding this. 

Arthur's book contains Langlands classification of irreducible tempered representations of  classical groups over $F$ in characteristic zero (as far as we know, his classification is still conditional, but it is soon expected  to be unconditional). 
Our description of irreducible tempered representations is modulo cuspidal data (i.e. irreducible cuspidal representations and cuspidal reducibilities in the generalized rank one cases). These are   different aspects of the same problem (our description is only one of the steps in understanding the irreducible tempered representations of classical groups). 
For a number of problems of harmonic analysis of classical groups (and automorphic forms), it is important to understand irreducible tempered representations in terms of square integrable ones.
As well, it  is important to understand irreducible square integrable representations in terms of cuspidal representations.
  For example,  the problem of unitarizability is the place where  such information is crucial.
   Such understanding was also important in the case of general linear groups, where the Bernstein-Zelevinsky theory provides  us with such understanding (clearly, the situation there is much simpler).

Arthur's classification of  irreducible square integrable representations is very important for our approach. 
In that classification,    cuspidal representations are very simply recognized  among all the irreducible square integrable  representations. This is done by C. M\oe glin. The  requirement on the admissible homomorphism corresponding to an irreducible cuspidal representation is a very simple condition, "without gaps", while the requirement on the character of the component group is also a simple condition, "being alternate" (for  precise assumptions see   \cite{Moe-mult-one}, and Theorem 2.5.1 there).
From the Langlands parameter  of such a cuspidal representation,  one sees directly all the "exceptional" cuspidal reducibilities, i.e. those ones which are  $\geq 1$ (at least for the odd orthogonal and symplectic groups). This is done by C. M\oe glin and J.-L. Waldspurger (see the remark (ii) in Remarks 4.5.2 of \cite{MW}). The remaining cuspidal reducibilities are controlled already by the local Langlands correspondences for general linear groups. 
Therefore, the Arthur's results  give us precisely the parameters that we are using. 
Let us recall that the case of unitary groups was completed earlier  by C. M\oe glin (in \cite{Moe-Pac}).

Let us mention that C. M\oe glin has obtained in \cite{Moe-mult-one} a parameterization of the packets determined by Arthur's parameters. Since special cases of these packets are tempered,  there is also a parameterization of irreducible tempered representations there. We do not go in this paper into  the relation between the parameterization in her paper and our paper.

After this paper has been completed (and submitted), we have learned for C. Jantzen's paper \cite{J3}. The main aims of  both papers are very close. The  results were independently obtained, and some of them are more or less the same (there is a
slight difference in the choice of parameters of some tempered
representations).
The approaches and the methods of proofs in the papers are very different. Therefore, the
papers are complementary and  give different understanding of the the problems studied in the papers. We are thankful to C. Jantzen for clearing the relation between parameterizations in the papers.

We are very thankful for particularly useful 
discussions on the topic of this paper to C. M\oe glin, and for her explanations. Discussions with M. Hanzer, A. Moy and G. Mui\'c   were also very helpful. The referees gave some very useful suggestions. Parts of this paper were written while the author was the guest of the Hong Kong University of Science and Technology. We are thankful to the University for the hospitality.

\section{Notation}\label{notation}
\setcounter{equation}{0}

In this section we shall  briefly recall  the notation that we
 use  for general linear and classical groups  groups in the paper. This notation we have already used in \cite{Moe-T} and \cite{T-inv} (see also \cite{T-Str}, \cite{Ban}  and \cite{MVW}).
More details regarding this notation can be found in those papers (see \cite{Z} for the case of general linear groups).

We have fixed a local non-archimedean field $F$. 
We  consider in this paper  symplectic, orthogonal and unitary
groups.
  If we consider
unitary groups, then 
$F'$  denotes the separable quadratic extension of $F$ which enters the
definition of the unitary groups. For the other series of groups, we take $F'=F$.
We denote by
$
\theta
$
  the non-trivial $F$-automorphism of
$F'$ if  $F'\ne F$. In the case $F'=F$, $\theta$  denotes the
identity mapping on $F$.
The modulus
character of
$F'$ is  denoted by $|\ \ |_{F'}$, and the character $|\text{det}|_{F'}$ of $GL(n,F')$ is denoted by
$\nu$.

For the group $G$ of rational points of a reductive group  defined
over $F$, the Grothendieck
group of the category Alg$_{\text{\,f.l.}}$($G)$ of  the representations of
$G$ of finite length, is denoted by  $\mathfrak R(G)$ (we  consider only smooth representations in this paper). The Grothendieck group $\mathfrak R(G)$ carries  a natural
ordering $\leq$.
The semi simplification of  
$\tau\in\text{Alg$_{\text{\,f.l.}}$}(G)$ is denoted by  $\text{s.s.}(\tau)$.
For representations  $\pi_1,\pi_2\in\text{Alg$_{\text{\,f.l.}}$}(G)$, the fact
$\text{s.s.}(\pi_1)\leq \text{s.s.}(\pi_2)$ we   write shortly as 
$\pi_1\leq \pi_2$.

For $0\leq k\leq n$, there exists a unique standard (with respect to the minimal parabolic subgroup consisting of the upper triangular matrices in $GL(n,F')$) parabolic
subgroup  $P_{(k,n-k)}=M_{(k,n-k)}N_{(k,n-k)}$ of $GL(n,F')$ whose Levi
factor $M_{(k,n-k)}$ is naturally isomorphic to $GL(k,F')\times
GL(n-k,F')$.
For smooth representations   $\pi_1$ of $GL(n_1,F')$ and
$\pi_2$ of $GL(n_i,F')$, $\pi_1\t
\pi_2$ denotes  the smooth representation of $GL(n_1+n_2,F')$   
     parabolically induced by
$\pi_1\otimes
\pi_2$ from $P_{(k,n-k)}$. Let
$$
R=\oplus_{n\geq 0}\mathfrak R(GL(n,F')).
$$
 We lift $\t$ to a $\mathbb  Z$-bilinear mapping
$R\t R\to R$. We denote this operation again by  $\times$. We can factor the mapping
$\times:R\t R\to R$   through
$R\o R$. The  mapping $R\o R\to R$  which we obtain in this way is denoted by
$m$. 

For a smooth representation   $\pi$ of $GL(n,F')$,
 we denote  by $r_{(k,n-k)}(\pi)$ the
normalized Jacquet module   with respect
to the parabolic subgroup $P_{(k,n-k)}$. 
The comultiplication $m^*:R\to R\o R$ is an additive mapping which  is given by
$
m^*(\pi)=\sum_{k=0}^n \text{s.s.}(r_{(k,n-k)}(\pi))
$
on irreducible representations.
With these two operations, the additive group $R$
becomes a graded Hopf algebra.

Let $\rho$ be an  irreducible cuspidal representation  of a general
linear group (over $F'$), and $n\in \Z_{\geq0}$.
Define
$
[\rho,\nu^n\rho]
:=
\{\rho,\nu\rho,\dots,\nu^n\rho\}.
$ The representation
$\nu^{n}\rho\t\nu^{n-1}\rho\t\dots\t\nu\rho\times\rho$ has a unique
irreducible subrepresentation. This is an essentially square
integrable  representations. It is  denoted 
$\d([\rho,\nu^n\rho])$.
We have
\begin{equation}
\label{Eq-m-seg}
m^*(\d([\rho,\nu^n\rho]))
=
\sum_{i=-1}^n
\d([\rho^{i+1},\nu^n\rho])
\o
\d([\rho,\nu^i\rho])
\end{equation}
(we take formally $\d(\emptyset)$ to be the trivial representation of $GL(0,F)=\{1\}$, which is identity of algebra $R$).
   Denote
$$\delta(\rho,a)=\delta([\nu^{-\frac{a-1}2}\rho,\nu^{\frac{a-1}2}\rho]),
\quad
a\in\mathbb  Z_{\geq0}.
$$

An irreducible essentially square integrable representation
$\delta$ of $GL(n,F')$ can be written as $
\d=\nu^{e(\d)}\d^u
$, where $e(\d)\in \mathbb  R$ and $\d^u$ is an
irreducible unitarizable square integrable representation. This defines $e(\d)$ and $\d^u$.

 In this paper, we shall fix either a series of symplectic, odd or even orthogonal, or unitary groups.

If we fix the symplectic series,  we consider   the Witt
tower of symplectic spaces, and the space in this tower of dimension $2n$  is denoted by  $V_n$ (for symplectic groups we define $Y_0$ to be the trivial vector space $\{0\}$). Then $S_n$ denotes the group of isomorphisms of $V_n$.

In the case of a series of odd orthogonal groups, we fix 
an anisotropic orthogonal vector space $Y_0$ over $F$ of odd
dimension (1 or 3) and  consider the Witt tower based on
$Y_0$. Take $n$ such that $2n+1\geq
$ $\text{dim}\, Y_0$. Then we have exactly one space $V_n$ in the 
tower of dimension  $2n+1$. The special orthogonal group of
this space is denoted by $S_n$. 

For an even-orthogonal series of groups, we fix an anisotropic
orthogonal space $Y_0$ over $F$ of even dimension, and consider the
Witt tower based on $Y_0$. Take $n$ such that $2n \geq \text{dim}_F(Y_0)$. Then
 there is exactly one space  $V_n$ in the  tower of dimension
$2n$. The orthogonal group of $V_n$ is denoted by $S_n$.

In the unitary  case, we consider   unitary groups $U(n,F'/F)$, where $F'$ 
is the separable quadratic extension of $F$ entering the definition of unitary groups. Here we also fix  an
anisotropic unitary space $Y_0$ over $F'$, and  consider the Witt tower of
unitary spaces
$V_n$ based on $Y_0$. Similarly as in the case of orthogonal groups, we have here also odd and even cases.
For $\text{dim}_{F'}(Y_0)$ odd (i.e. 1) and  for  $n$ satisfying  
$2n+1\geq \text{dim}_{F'}(Y_0)$, let   $V_n$ be the space in
the  tower of dimension $2n+1$. Denote the unitary group of this space
 by $S_n$.
For $\text{dim}_{F'}(Y_0)$  even (i.e. 0) and for  $n$ satisfying
$2n\geq \text{dim}_{F'}(Y_0)$, let  $V_n$ be the space in the
 tower of dimension $2n$. Denote its unitary group  by $S_n$.

  We fix a minimal parabolic subgroup in $S_n$ (in this paper we shall consider only standard
parabolic subgroups with respect to this 
minimal parabolic subgroup). One can see in \cite{T-Str}
matrix realizations of the split connected classical groups (and a description of their standard parabolic subgroups).

In what follows,  one  series of groups $\{S_n\}_n$ as above will be fixed. Denote by $n'$
 the Witt index of $V_n$ (i.e. $n'=n-\frac12 \text{dim}_{F'}(Y_0)$ if
$V_n$ is symplectic or even-orthogonal or even-unitary group,
and otherwise $n'=n-\frac12 (\text{dim}_{F'}(Y_0)-1)$). For an integer  $k$ which satisfies 
$0\leq k\leq n'$,  there is a standard parabolic
subgroup $P_{(k)}=M_{(k)}N_{(k)}$ of
$S_n$, whose Levi factor  $M_{(k)}$ is naturally isomorphic to
$GL(k,F')\t S_{n-k}$  
(the group $P_{(k)}$ is the stabilizer of an
isotropic space of dimension $k$). 
Let  $\pi$ and $\s$ be smooth representations of  $GL(k,F')$ and
$S_{n-k}$ respectively. We denote by 
$$
\pi\r\sigma
$$
 the representation parabolically
induced by $\pi\o
\sigma$ (from $P_{(k)}$).
A very useful property of operations $\t$ and $\r$ is
\begin{equation}
\label{iso}
\pi_1\rtimes (\pi_2\rtimes\sigma )\cong (\pi_1\times\pi_2)\rtimes\sigma.
\end{equation}
For a smooth representation $\pi$ of $GL(k,F')$,  denote by $\check\pi$  the representation
$
g\mapsto \tilde\pi(\theta(g))
$
(here $\tilde\pi$ denotes the contragredient representation of
$\pi$). The representation $\pi$  is called  $F'/F$-selfdual if 
$
\pi\cong\check\pi.
$
In the case $F'=F$,  we  say also simply that $\pi$ is selfdual.
 If    
 $\pi$ and $\s$ are representations of finite length, then we have 
\begin{equation}
\label{asso}
\text{s.s.}(\pi\rtimes\sigma)=\text{s.s.}(\check\pi\rtimes\sigma).
\end{equation}
Observe that in the case that $\pi \rtimes \sigma$ is irreducible, we have 
$\pi\rtimes\sigma\cong\check\pi\rtimes\sigma$.
Denote 
$$
R(S)=\underset n \oplus \,\mathfrak R(S_n),
$$
where the above sum runs 
 over all integers $n \geq
\frac12 \,(\text{dim}_{F'}(Y_0)-1)$ if we consider odd-orthogonal or
odd-unitary groups, and otherwise over all $n\geq \frac12\,
\text{dim}_{F'}(Y_0)$). 
Now
$\r$ lifts in a natural way to a bilinear mapping
$
R\t R(S) \to R(S)
$,  denoted again by $\rtimes$. 

 For a smooth representation $\tau$ of $S_n$, the normalized Jacquet module of $\tau$
with respect to $P_{(k)}$ is denoted by
$s_{(k)}(\tau)$.
If $\tau$ is irreducible,
denote
$$
\mu^*(\tau)=\sum_{k=0}^{ n'}
\text{s.s.}\left(s_{(k)}(\tau\right)),
$$ 
where $n'$ denotes
the Witt index of $V_n$. Extend
$\mu^*$ additively to $\mu^*:R(S)\to R\o R(S)$.
Let \
\begin{equation}
\label{Eq-M*}
 M^*= (m \otimes 1) \circ (\, \check{\ }\, \otimes m^\ast)  \circ
\kappa  \circ m^\ast : R\to R\o R,
\end{equation}
 where $\check{\ }\,:R\to R$ is a
group homomorphism determined by the requirement that
$\pi\mapsto\check\pi$ for all irreducible representations $\pi$, and where $\kappa:R\t R
\to R\t R$ maps $\sum x_i\o y_i$ to $\sum y_i\o x_i.$ The action
$\rtimes$ of
$R\otimes R$ on $R\otimes R(S)$ is defined in a natural way.
Then
\begin{equation}
\label{Eq-jm}
\mu^*(\pi \rtimes \sigma)= M^*(\pi) \rtimes \mu^*(\sigma)
\end{equation}
holds  for $\pi\in R$ and $\sigma \in R(S)$.

Let
$\rho$ be an irreducible  $F'/F$-selfdual cuspidal representation of a
general linear group and $x,y\in \mathbb  R$ which satisfy $y-x\in \mathbb 
Z_{\geq0}$. Then \eqref{Eq-m-seg} and \eqref{Eq-jm} imply
\begin{equation}
\label{Eq-M-seg}
M^*\big(\delta([\nu^{x}\rho,\nu^{y}\rho])\big) =
\sum_{i= x-1}^{ y} 
\sum_{j=i}^{ y}
\delta([\nu^{-i}\rho,\nu^{-x}\rho]) 
 \times
\delta([\nu^{j+1} \rho,\nu^{y}\rho]) \otimes
\delta([\nu^{i+1} \rho,\nu^{j}\rho]),
\end{equation}
where $y-i,y-j\in \mathbb  Z_{\geq 0}$ in the above sums. The part of $M^*\big(\delta([\nu^{x}\rho,\nu^{y}\rho])\big)$ contained in $R\o R_0$ will be denoted by $M_{GL}^*\big(\delta([\nu^{x}\rho,\nu^{y}\rho])\big)\o1$. Then
\begin{equation}
\label{Eq-MGL-seg}
M_{GL}^*\big(\delta([\nu^{x}\rho,\nu^{y}\rho])\big) =
\sum_{i= x-1}^{ y} 
\delta([\nu^{-i}\rho,\nu^{-x}\rho]) 
 \times
\delta([\nu^{i+1} \rho,\nu^{y}\rho])
\end{equation}
(again  $y-i\in \mathbb  Z_{\geq 0}$ in the above sum).

Now we shall recall  the definition of Jordan blocks attached to an irreducible square integrable representation of a classical group. First we shall define representations $R_k$ of the $L$-group of  $F$-group $GL(k,F')$.
Suppose that we  consider the symplectic series of 
groups (resp. one of orthogonal series of groups), Let $k\in\mathbb  Z_{>0}$. 
Then by $R_k$ will be denote the representation  of
$GL(k,\mathbb  C)$ on $\wedge^2\mathbb  C^k$ (resp., $\text{Sym}^2\mathbb  C^k$).

Now suppose that we  consider a series of unitary groups. Then the  $L$-group
of $F$-group
 $GL(k,F')$ is isomorphic to a semidirect
product
$
\big(GL(k,\mathbb  C)\times GL(k,\mathbb  C)\big) \rightthreetimes
Gal(F'/F).
$
The non-trivial element  $\theta$ of $Gal(F'/F)$ acts on the normal
subgroup $GL(k,\mathbb  C)\times GL(k,\mathbb  C)$ by
$
\theta(g_1,g_2,1)\theta^{-1}=(^t\!g_2^{-1},^t\!\!g_1^{-1},1)
$, where $^t\!g$ denotes the transposed matrix of $g$ (see \cite{Moe-Ex}).
For $\eta\in\{\pm1\}$ denote by  $R_{k}^{(\eta)}$   the
representation of the above $L$-group (of $GL(k,F')$) on
$\text{End}_{\mathbb  C}(\mathbb  C^{k})$, determined  by
$
(g_1,g_2,1)\,u=g_1\, u\, ^t\!g_2
$
 and 
$
(1,1,\theta)\,u=\eta\, ^t\!u.
$
Suppose that $S_n$ is a series of (unitary) groups such that
dimensions of $V_n$ are even (resp. odd). Then by  
$R_{k}$ will be denoted the representation $R_{k}^{(1)}$ (resp.
$R_{k}^{(-1)}$).

Now we have the following  definition  of the Jordan blocks (from \cite{Moe-Ex};   $L(\rho,R_{d_\rho},s)$ denotes  the $L$-function
defined by F. Shahidi).

 \begin{definition}
 \label{Def-jb} The set
$Jord(\pi)$ of Jordan blocks attached to an irreducible square integrable representation $\pi$ of a classical group $S_n$ is the set of all pairs
$(\rho,a)$ where
$\rho$ is an irreducible
$F'/F$-selfdual cuspidal representation of 
$GL(d_\rho,F')$  and
$a\in
\mathbb  Z_{>0}$, such that:
\begin{enumerate}
\item[(J1)]
$ a \text{ is even if } L(\rho,R_{d_\rho},s) \text{ has a pole at
}s=0, \text{ and odd otherwise,}
$

\item[(J2)]
$
\d(\rho,a)\r\pi \text{ is irreducible.}
$
\end{enumerate}
\enddefinition

Denote
$
Jord_\rho(\pi)=\{a;(\rho,a)\in Jord(\pi)\}
$
($\rho$ is  an
irreducible $F'/F$-selfdual cuspidal representation of a general linear group).

\end{definition}

In this paper we shall assume   that the following (basic)
assumption holds for any 
 irreducible cuspidal representation $\pi_c$ of any $S_q$, and for any
 irreducible $F'/F$-selfdual cuspidal representation $\rho$ of any
$GL(p,F')$:
\begin{enumerate}
\item[(BA)] If we denote by
$$
a_{\pi_c,\max,\rho}= 
\begin{cases}
\ \text{max}\, Jord_\rho(\pi_{c}) 
&
\text{ if } 
Jord_\rho(\pi_{c}) \ne\emptyset,
\\
\ 0 
&
\text{ if } 
L(\rho,R_{d_\rho},s) \text{ has a pole at $s=0$} \text{ and }
Jord_\rho(\pi_{c}) =\emptyset, 
\\ -1 
&
 \text{ otherwise. }
\end{cases}
$$
\end{enumerate}
then 
$$
\nu^{\pm(1+a_{\pi_c,\max,\rho})/2}\rho \rtimes\pi_{c} 
$$
$\text{ reduces}$.

\begin{remark}\label{rem-BA}
  The basic assumption is known to hold in some cases, while for (the most) of other cases, it is expected to be known fact   very soon (when  the facts on which relays \cite{A} become available).

  For unitary groups, it follows from C. M\oe glin's paper \cite{Moe-Pac}. Namely, by \cite{Si} we know that there is only one non-negative exponent $x$ for which $\nu^x\rho\r\pi_c$ is reducible. Propositions 3.1 of \cite{Moe-Pac} gives that this exponent is integer or half-integer. If it is 0 or 1/2, then  Proposition 13.2 of \cite{T-RPI} and \cite{Moe-fin} imply that $Jord_\rho(\pi_{c})=\emptyset$. Suppose that  $\nu^x\rho\r\pi_c$ reduces for some $x\geq 1$. Now Theorem 13.2 of \cite{T-RPI} implies that $\d(\rho,n)\r\pi_c$ is irreducible for all positive integers of the parity same as the parity of $2x$, while for the other parity we have irreducibility for $n\leq 2x-1$, and reducibility otherwise. Note that now the unique irreducible subrepresentation of $\nu^x\rho\r\pi_c$ is square integrable. Denote it by $\pi$. Then the extended cuspidal support of $\pi$ contains $\nu^x\rho$. Now Proposition
   5.6 of \cite{Moe-Pac} implies that $2x+1$ has even parity if the $L$-function from the Definition  \ref{Def-jb} has a pole at $s=0$, and odd otherwise. Therefore, the basic assumption holds for unitary groups.
   
   Observe that \cite{Moe-Pac} gives much more then the basic assumption. It gives the full classification of irreducible square integrable representations, and it gives also the full classification  of the (subfamily) of cuspidal representations of these groups (moreover, from the parameters on the Galois side one can see what are the cuspidal reducibilities).

    For the odd orthogonal and symplectic groups,  it follows from  remark (ii) in Remarks 4.5.2 of the paper \cite{MW} of C. M\oe glin and J.-L. Waldspurger and the  book \cite{A} of J. Arthur (Theorem 1.5.1). We are very thankful to C. M\oe glin for explaining this to us.

\end{remark}

In this section we have recalled  the definition of invariants $\jp$ and $\pc$. The definition of the third invariant $\ep$, convenient for our purposes, can be found in \cite{T-inv} (see also section \ref{jac-mod-int} of this paper). Let us  recall that $\ep$ is defined on a subset of $\jp\ \cup\ \jp\t\jp$, and that it takes values in $\{\pm1\}$.

\section{Two basic preliminary facts about Jordan blocks and partially defined functions}\label{prelim}
\setcounter{equation}{0}

Here we shall recall  two basic facts about Jordan blocks and partially defined functions which we shall use often in the paper.

First we shall recall  a general fact regarding Jordan blocks (Proposition  2.1 of \cite{Moe-T}):

\begin{proposition}\label{Pr-jb-main} Let $\pi'$ be an irreducible square
integrable representation of
$S_q$ and let
$x,y \in (1/2){ {\mathbb Z}}$ such that $x-y\in {\mathbb Z}_{\geq 0}$. Let
$\rho$ be an $F'/F$-selfdual cuspidal 
(unitarizable)
representation of
$GL(d_\rho)$. We assume that $x$,
$y\in\Z$ if and only if $L(\rho,R_{d_\rho},s)$ has no pole
at
$s=0$. Further, suppose that there is an irreducible square
integrable representation
$\pi$ embedded in the induced representation
\begin{equation}\label{Eq-jb1-}
\pi \hookrightarrow 
\d([\nu^{y}\rho,  \nu^x \rho])\rtimes \pi'.
\end{equation}
or more generally, into
\begin{equation}\label{Eq-jb1}
\pi \hookrightarrow \nu^x \rho\times \cdots \times
\nu^{x-i+1}\rho
\times \cdots \times \nu^y \rho\rtimes \pi'.
\end{equation}
Then holds:

\begin{enumerate}
\item
If $y>0$, then 
$$ 
Jord(\pi)=
\left(Jord(\pi')\ \backslash\ \{(\rho,2y-1)\}\right)\cup
\{(\rho,2x+1)\}.
$$ 
Further, $2y-1\in Jord_\rho(\pi')$ if $y\geq 1$.

\item If $y\leq 0$ , then
$$ 
Jord(\pi)=Jord(\pi')\cup \{(\rho,2x+1),(\rho,-2y+1)\}.
$$ 
In particular, $2x+1$ and $-2y+1$ are not in
$Jord_\rho(\pi')$.
\end{enumerate}
\end{proposition}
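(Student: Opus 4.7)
The plan is to use the Jacquet module machinery (formulas \eqref{Eq-M*}, \eqref{Eq-jm}, \eqref{Eq-MGL-seg}) together with Casselman's square-integrability criterion to track how Jordan blocks transform under the embedding. First I would reduce to the more general case \eqref{Eq-jb1}: since $\delta([\nu^y\rho,\nu^x\rho])$ is the unique irreducible subrepresentation of $\nu^x\rho\times\cdots\times\nu^y\rho$, any $\pi$ satisfying \eqref{Eq-jb1-} automatically satisfies \eqref{Eq-jb1}. Thus it suffices to analyze \eqref{Eq-jb1}. By Frobenius reciprocity, $\nu^x\rho\o\nu^{x-1}\rho\o\cdots\o\nu^y\rho\o\pi'$ appears as a quotient of the iterated Jacquet module $s_{GL}(\pi)$ along the chain of maximal parabolics peeling off $GL(d_\rho)$'s; this pins down the partial cuspidal support $\pi_{cusp}=\pi'_{cusp}$ and places strong constraints on the cuspidal support of $\pi$.

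The core of the argument is the following test. For $a\in \mathbb Z_{>0}$ of the appropriate parity (i.e.\ satisfying (J1)), the claim is that $(\rho,a)\in \jp$ if and only if $\delta(\rho,a)\rtimes\pi$ is irreducible, and one can test this by computing $\mu^*(\delta(\rho,a)\rtimes\pi)$ via \eqref{Eq-jm}. Using the embedding \eqref{Eq-jb1}, one gets $\mu^*(\pi)$ as a sum of terms built from $M^*$ applied to the segment factors, and ultimately the problem reduces to comparing $\mu^*(\delta(\rho,a)\rtimes\pi)$ and $\mu^*(\delta(\rho,a)\rtimes\pi')$, because the Jacquet modules of $\pi$ and $\pi'$ differ in a controlled way: all "new" irreducible subquotients of $\mu^*(\pi)$ arise from inserting the fixed segment $[\nu^y\rho,\nu^x\rho]$ (or parts of it and its contragredient) into terms of $\mu^*(\pi')$.

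The case analysis then splits naturally. \textbf{Case} $y>0$: every $\nu^{i}\rho$ appearing in the inducing datum has positive exponent, so applying $M^*$ to the segment only contributes $\nu^i\rho$ with $i\geq y$ to the $GL$-part and does not pair with $\nu^{-i}\rho$. One checks by Casselman's criterion that square-integrability of $\pi$ forces $2y-1\in \jrp(\pi')$ when $y\geq 1$ (otherwise the exponents in $s_{(*)}(\pi)$ would exceed what square-integrability allows). The reducibility test then shows that the Jordan block $(\rho,2y-1)$ of $\pi'$ is replaced by $(\rho,2x+1)$ in $\pi$, while every other Jordan block is preserved. \textbf{Case} $y\leq 0$: the segment $[\nu^y\rho,\nu^x\rho]$ contains $\rho$ (when $y\leq 0\leq x$) and hence is selfdual up to the two halves $[\nu\rho,\nu^x\rho]$ and $[\nu\rho,\nu^{-y}\rho]$; applying $M^*$ via \eqref{Eq-MGL-seg} produces contributions with both $2x+1$ and $-2y+1$ as new "ends", and neither can already be in $\jrp(\pi')$ (else one obtains a non-square-integrable exponent in $\mu^*(\pi)$ contradicting Casselman). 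Testing reducibility of $\delta(\rho,2x+1)\rtimes\pi$ and $\delta(\rho,-2y+1)\rtimes\pi$ via the same Jacquet module computation then gives irreducibility for each, so both are Jordan blocks of $\pi$.

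The main obstacle is the combinatorial bookkeeping inside $\mu^*(\delta(\rho,a)\rtimes\pi)$: one must verify that the contributions to this Jacquet module which would witness reducibility cancel precisely for $a\in\{2x+1\}$ (and $\{-2y+1\}$ in the second case), and persist for all other $a$ that were Jordan blocks of $\pi'$. This requires exploiting \eqref{Eq-M-seg} together with the associativity \eqref{iso} and the selfduality \eqref{asso} to rewrite the relevant terms as segments that either match or fail to match the segments arising from $\mu^*(\pi')$, and then applying Casselman's criterion to rule out exponents incompatible with square integrability of $\pi$. Once the reducibility/irreducibility of $\delta(\rho,a)\rtimes\pi$ is matched with that of $\delta(\rho,a)\rtimes\pi'$ (respectively inverted on the specified Jordan blocks), the statement of the proposition follows from Definition \ref{Def-jb}.
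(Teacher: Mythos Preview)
The paper does not supply its own proof of this proposition: it is quoted verbatim as Proposition~2.1 of \cite{Moe-T} and used as a black box. So there is no in-paper argument to compare against; the relevant comparison would be with the proof in \cite{Moe-T}.

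Your sketch has a genuine gap at its central step. You propose to determine whether $(\rho,a)\in\jp$ by ``comparing $\mu^*(\delta(\rho,a)\rtimes\pi)$ and $\mu^*(\delta(\rho,a)\rtimes\pi')$'', asserting that contributions ``witnessing reducibility'' cancel exactly for $a=2x+1$ (and $-2y+1$). But reducibility of $\delta(\rho,a)\rtimes\pi$ is not read off from the semisimplified Jacquet module alone: two representations can have identical $\mu^*$ yet differ in reducibility, and conversely a multiplicity count in $\mu^*$ does not by itself force a splitting. You never specify which subquotient of which Jacquet module would certify reducibility, nor why its presence or absence is equivalent to $(\rho,a)\notin\jp$. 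The phrase ``contributions\ldots cancel precisely'' is a placeholder for the entire argument. Similarly, your Casselman-criterion step (``square-integrability of $\pi$ forces $2y-1\in Jord_\rho(\pi')$'') is asserted, not proved: you would need to exhibit a specific term in some $s_{(k)}(\pi)$ with a non-positive exponent and explain why it cannot be avoided when $2y-1\notin Jord_\rho(\pi')$.

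The proof in \cite{Moe-T} does not proceed by a global Jacquet-module comparison of $\delta(\rho,a)\rtimes\pi$ against $\delta(\rho,a)\rtimes\pi'$. It relies on the structural theory built up there (in particular the characterization of reducibility points $\nu^\alpha\rho\rtimes\pi$ in terms of $\jrp$, analogous to Proposition~\ref{Pr-red-si} in the present paper) together with an inductive reduction on the length of the segment. If you want to salvage your approach, you would need at minimum a lemma of the form ``$\delta(\rho,a)\rtimes\pi$ reduces iff a specific irreducible subquotient occurs in $\mu^*(\delta(\rho,a)\rtimes\pi)$ with multiplicity $\geq 2$'' (or some equally concrete criterion), and then carry out the bookkeeping explicitly rather than gesturing at it.
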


The following theorem follows directly from Lemmas 5.4, 5.5 and (ii) of Proposition 6.1 (all) from \cite{Moe-Ex}:

\begin{theorem}\label{Th-eps=} Let $\pi'$ be an irreducible square
integrable representation of
$S_q$. Take
$a_,a_- \in  \mathbb Z$   such that $a-a_-\in {2\mathbb Z}_{>0}$.  Let
$\rho$ be an $F'/F$-selfdual cuspidal  representation of
$GL(d_\rho)$. Suppose that $a_-$,
$a\in1+2\Z$ if and only if $L(\rho,R_{d_\rho},s)$ has no pole
at
$s=0$. Assume that 
$$
\text{\rm Jord}_\rho(\pi')\cap [a_-,a]=\emptyset.
$$
 Let
$\pi$ be an  irreducible subrepresentation of
\begin{equation}\label{Eq-eps=}
\delta([ \nu^{-(a_--1)/2} \rho, 
\nu^{(a-1)/2}\rho])
\rtimes \pi'.
\end{equation}
Then $\pi$ is square integrable and
$$ 
\jrp=Jord_\rho(\pi')\cup \{(\rho,a_-),(\rho,a)\}.
$$ 
Further, for $(\rho',b)\in Jord_\rho(\pi')$, $\e_{\pi}(\rho',b)$ is defined if and only if $\e_{\pi'}(\rho',b)$ is defined. If they are defined, then
$$
\e_{\pi}(\rho',b)=\e_{\pi'}(\rho',b).
$$
If there exists $(\rho',c)\in \text{\rm Jord}(\pi')$ with $c \ne b$, then
$$
\e_{\pi}(\rho',b)\e_{\pi}(\rho',c)^{-1}=\e_{\pi'}(\rho',b)\e_{\pi'}(\rho',c)^{-1}.
$$
\end{theorem}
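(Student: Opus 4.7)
My plan is to split the theorem into two claims: (a) the computation of $\jrp$ (given that $\pi$ is square integrable), and (b) the transfer of the $\ep$ invariant. Part (a) will follow directly from Proposition \ref{Pr-jb-main}(2) applied with $y=-(a_--1)/2\leq 0$ and $x=(a-1)/2$, since then $-2y+1=a_-$ and $2x+1=a$, giving the claimed $\jrp=\text{Jord}_\rho(\pi')\cup\{a_-,a\}$ and in particular $a_-,a\notin\text{Jord}_\rho(\pi')$. What remains is to verify square integrability of $\pi$; for this I would apply Casselman's temperedness criterion to the Jacquet modules of $\pi$, computed via \eqref{Eq-jm} and \eqref{Eq-M-seg}. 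Each term in the expansion of $\mu^\ast\bigl(\delta([\nu^{-(a_--1)/2}\rho,\nu^{(a-1)/2}\rho])\rtimes\pi'\bigr)$ either carries $\nu^{(a-1)/2}\rho$ as a strictly positive leading exponent on the general linear side, or inherits strict positivity from the square integrability of $\pi'$. The hypothesis $\jrp\cap[a_-,a]=\emptyset$ rules out the intermediate reducibilities that could otherwise allow a subquotient to violate the strict Casselman inequalities.

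For (b), I would read off the embedding definition of $\e_{\pi'}$ and transport it along the embedding of $\pi$. Concretely, if either the value $\e_{\pi'}((\rho',b))$ or the ratio $\e_{\pi'}((\rho',b))\e_{\pi'}((\rho',c))^{-1}$ is witnessed by an embedding
$$
\pi'\hookrightarrow\delta(\Delta)\rtimes\sigma'
$$
for a suitable $\rho'$-segment $\Delta$ and an irreducible square integrable $\sigma'$, then composition yields
$$
\pi\hookrightarrow\delta\bigl([\nu^{-(a_--1)/2}\rho,\nu^{(a-1)/2}\rho]\bigr)\times\delta(\Delta)\rtimes\sigma'.
$$
Provided the two factors commute, as they do when their cuspidal supports are suitably separated, this rearranges as
$$
\pi\hookrightarrow\delta(\Delta)\times\delta\bigl([\nu^{-(a_--1)/2}\rho,\nu^{(a-1)/2}\rho]\bigr)\rtimes\sigma'.
$$
Applying part (a) with $\sigma'$ in place of $\pi'$ produces an irreducible square integrable subquotient $\sigma$ of $\delta([\nu^{-(a_--1)/2}\rho,\nu^{(a-1)/2}\rho])\rtimes\sigma'$ with $\jrs=\text{Jord}_\rho(\sigma')\cup\{a_-,a\}$, and the resulting embedding $\pi\hookrightarrow\delta(\Delta)\rtimes\sigma$ witnesses the desired value (or ratio) for $\e_\pi$.

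The principal obstacle is the commutation step when $\rho'=\rho$: the auxiliary segment $\Delta$ used to witness $\e_{\pi'}$ has cuspidal support in $\rho$-exponents and may \emph{a priori} overlap the exponents of $[\nu^{-(a_--1)/2}\rho,\nu^{(a-1)/2}\rho]$, even though the Jordan blocks themselves are disjoint from $[a_-,a]$. Carrying this out systematically requires the case analysis of Lemmas 5.4, 5.5 and Proposition 6.1(ii) of \cite{Moe-Ex}; in particular, when only the ratio of $\e$-values is canonical (the setting of Proposition 6.1(ii)), one must argue at the level of relative signs rather than absolute values, which is where the bulk of the technical work lies.
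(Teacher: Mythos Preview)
Your proposal is aligned with the paper's treatment: the paper does not give its own proof of this theorem but simply records it as a direct consequence of Lemmas 5.4, 5.5 and Proposition 6.1(ii) of \cite{Moe-Ex}, exactly the references you arrive at in your final paragraph. Your outline for the Jordan-block computation via Proposition~\ref{Pr-jb-main}(2) is correct, and your embedding-transport scheme for the $\epsilon$-invariant is the right shape.

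Two remarks on points where your sketch is looser than it should be. First, your Casselman argument for square integrability is not yet a proof: the $M^*$-expansion of $\mu^*$ of the full induced representation certainly contains terms with negative leading exponents (coming from the $\check{\ }$-branch in \eqref{Eq-M*}), so the work lies in showing that those terms do not survive in $\mu^*(\pi)$ for the particular subrepresentation $\pi$. This is where the hypothesis $\text{Jord}_\rho(\pi')\cap[a_-,a]=\emptyset$ and the structure theory in \cite{Moe-Ex} enter, and it is not something one can read off the raw expansion. Second, in your transport argument for the $\epsilon$-ratio you appeal to commuting the two $\delta$-factors; when $\rho'=\rho$ the segments can genuinely be linked (e.g.\ the witness segment for a neighbouring pair in $\text{Jord}_\rho(\pi')$ may abut $[a_-,a]$), so irreducibility of the tensor product is not automatic and one must instead argue via Jacquet-module subquotients or the refined embeddings in \cite{Moe-Ex}. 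You flag this correctly as the principal obstacle, but be aware that it is not merely a bookkeeping case split---it is the substantive content of the cited results.
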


\begin{definition}
\label{reduction=}
 The square integrable representation $\pi'$ in the above theorem is uniquely determined (up to an equivalence) by $\pi$, and we shall denote it by
$$
\pi'=\pi^{-\{(\rho,a_-),(\rho,a)\}}.
$$
\end{definition}
 
 Now we shall recall  Lemma 5.3 of \cite{Moe-T}:

\begin{lemma}\label{Le-sr} Let $\pi$ be an irreducible square integrable
representation and suppose that there exists
$a\in Jord_\rho(\pi)$ such that $a+2\not\in Jord_\rho(\pi)$. Then
$$
\nu^{(a+1)/2}\rho\r \pi
$$ reduces. Further, it contains a unique irreducible subrepresentation
\end{lemma}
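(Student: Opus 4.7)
The lemma has two parts: reducibility of $\nu^{(a+1)/2}\rho\rtimes\pi$, and uniqueness of its irreducible subrepresentation. I will address uniqueness first since it reduces to a clean Jacquet module computation.

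For uniqueness, I would observe that by Frobenius reciprocity every irreducible subrepresentation $\tau$ of $\nu^{(a+1)/2}\rho\rtimes\pi$ gives rise to a nonzero surjection $s_{(d_\rho)}(\tau)\twoheadrightarrow \nu^{(a+1)/2}\rho\otimes\pi$, so it suffices to show that $\nu^{(a+1)/2}\rho\otimes\pi$ occurs with multiplicity one in $s_{(d_\rho)}(\nu^{(a+1)/2}\rho\rtimes\pi)$. Using \eqref{Eq-jm} together with
$$
M^*(\nu^{(a+1)/2}\rho)=\nu^{(a+1)/2}\rho\otimes 1+\nu^{-(a+1)/2}\rho\otimes 1+1\otimes\nu^{(a+1)/2}\rho,
$$
the only contribution matching $\nu^{(a+1)/2}\rho\otimes\pi$ comes from pairing $\nu^{(a+1)/2}\rho\otimes 1$ with the level-zero part $1\otimes\pi$ of $\mu^*(\pi)$. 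The term $1\otimes\nu^{(a+1)/2}\rho$ would require $\nu^{(a+1)/2}\rho$ to appear as a left tensorand in $\mu^*(\pi)$, which is forbidden by Casselman's square-integrability criterion for $\pi$ since the exponent $(a+1)/2>0$. Hence multiplicity one holds, and there is at most one irreducible subrepresentation.

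For reducibility, I would produce an irreducible square integrable subrepresentation $\pi^+$ of $\nu^{(a+1)/2}\rho\rtimes\pi$. Since $\nu^{(a+1)/2}\rho\rtimes\pi$ is a standard module with strictly positive exponent, its unique irreducible Langlands quotient is nontempered, so a square integrable irreducible constituent is necessarily a proper subrepresentation, forcing reducibility. To produce $\pi^+$, I invoke the M\oe glin classification from \cite{Moe-Ex} and \cite{Moe-T}: the triple
$$
\big(\,(\jp\setminus\{(\rho,a)\})\cup\{(\rho,a+2)\},\ \epsilon^+,\ \pc\,\big),
$$
with $\epsilon^+$ obtained from $\epsilon_\pi$ by transporting its values across the deformation $(\rho,a)\to(\rho,a+2)$, satisfies the admissibility axioms and therefore corresponds to a unique irreducible square integrable representation $\pi^+$. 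The embedding $\pi^+\hookrightarrow \nu^{(a+1)/2}\rho\rtimes\pi$ is then obtained by unwinding M\oe glin's inductive construction: both $\pi$ and $\pi^+$ arise from a common smaller square integrable representation by iterated applications of Theorem \ref{Th-eps=}, and the only divergence in the two chains of embeddings occurs at the final step, where the segment used for $\pi^+$ is one character $\nu^{(a+1)/2}\rho$ longer than the one used for $\pi$.

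The main obstacle is verifying the embedding $\pi^+\hookrightarrow\nu^{(a+1)/2}\rho\rtimes\pi$ in the reducibility step. This amounts to a careful comparison of the parallel construction chains for $\pi$ and $\pi^+$, using the linking relation $\delta([\nu^{x}\rho,\nu^{(a+1)/2}\rho])\hookrightarrow \nu^{(a+1)/2}\rho\times\delta([\nu^{x}\rho,\nu^{(a-1)/2}\rho])$ to commute the extra $\nu^{(a+1)/2}\rho$ to the outside, and invoking the multiplicity-one input from the uniqueness step to identify $\pi^+$ as the unique subrepresentation.
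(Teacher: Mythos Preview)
Your uniqueness argument contains a genuine error: Casselman's square-integrability criterion is invoked backwards. For a square integrable representation $\pi$, the criterion says that the central exponents appearing in proper Jacquet modules are \emph{strictly positive}, not that positive exponents are forbidden. So the possibility that $\nu^{(a+1)/2}\rho\otimes\tau'\leq\mu^*(\pi)$ is not at all excluded by Casselman; the exponent $(a+1)/2>0$ is exactly what Casselman \emph{requires}. The way to rule out this term is to use the hypothesis of the lemma: if $\nu^{(a+1)/2}\rho\otimes\tau'\leq\mu^*(\pi)$, then Proposition~\ref{Pr-jb-jac}(1) forces $(\rho,a+2)\in\jp$, contradicting $a+2\notin\jrp$. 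With this correction the multiplicity-one computation goes through and uniqueness follows.

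For reducibility, note that the paper does not give a proof but simply records this as Lemma~5.3 of \cite{Moe-T}; moreover the statement is exactly item~(v) of Proposition~\ref{Pr-red-si}. Your route via constructing an explicit square integrable $\pi^+$ with the deformed Jordan block and then verifying $\pi^+\hookrightarrow\nu^{(a+1)/2}\rho\rtimes\pi$ is a legitimate alternative, but it is considerably heavier than needed and, as you yourself flag, the embedding step is the real work. Since $\pi^+$ is produced abstractly from an admissible triple, you would still need the reduction-to-cuspidal-support machinery of \cite{Moe-T} (essentially the content of Lemma~10.2.2 there, which is the converse direction of what you want) to pin down the embedding; this is not a triviality that follows just from ``unwinding the construction''.
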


We  recall next the elementary Lemma 3.2  of \cite{Moe-T} which shall be used often without mentioning.

\begin{lemma}\label{Le-e2}  Let $\pi$ be an irreducible representation of a
reductive $p$-adic group and let $P=MN$ be a parabolic subgroup of
$G$. Suppose that $M$ is a direct product of two reductive subgroups
$M_1$ and $M_2$. Let
$\tau_1$ be an irreducible representation of $M_1$, and let $\tau_2$
be a representation of
$M_2$. Suppose
$$
\pi\h \text{Ind}_P^G(\tau_1\o \tau_2).
$$ Then there exists an irreducible representation $\tau_2'$ such that
$$
\pi\h \text{Ind}_P^G(\tau_1\o \tau_2').
$$
\end{lemma}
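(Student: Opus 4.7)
The plan is to use Frobenius reciprocity to translate the embedding into a map of Jacquet modules, extract an irreducible quotient of its image, and verify that this quotient has the required tensor product form.

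First, I would invoke Frobenius reciprocity for the embedding $\pi\hookrightarrow\text{Ind}_P^G(\tau_1\otimes\tau_2)$ to obtain a nonzero $M$-homomorphism $f\colon r_M(\pi)\to\tau_1\otimes\tau_2$, where $r_M$ denotes the normalized Jacquet functor along $P$. Let $\tau':=f(r_M(\pi))$ be its image. Since $\pi$ is an irreducible smooth representation it is admissible by Jacquet's theorem, so $r_M(\pi)$ is admissible of finite length; consequently $\tau'$ is admissible of finite length as well, and therefore admits an irreducible quotient $\tau_0$.

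The key step is to show $\tau_0$ has the form $\tau_1\otimes\tau_2'$ for some irreducible smooth representation $\tau_2'$ of $M_2$. By the tensor product theorem for irreducible smooth admissible representations of a direct product of reductive $p$-adic groups one writes $\tau_0\cong\sigma_1\otimes\tau_2'$, and to see $\sigma_1\cong\tau_1$ I would argue that every irreducible subquotient of $\tau_1\otimes\tau_2$ restricts to $M_1$ as a sum of copies of $\tau_1$ (since the restriction of $\tau_1\otimes\tau_2$ itself to $M_1$ is such a sum); alternatively, take any filtration of $\tau'$ by subrepresentations induced by a filtration of $\tau_2$, whose successive quotients are of the form $\tau_1\otimes(\text{irreducible subquotient of }\tau_2)$, and extract the top composition factor.

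Composing the surjection $r_M(\pi)\twoheadrightarrow\tau_0=\tau_1\otimes\tau_2'$ and applying Frobenius reciprocity in the reverse direction yields a nonzero $G$-homomorphism $\pi\to\text{Ind}_P^G(\tau_1\otimes\tau_2')$, which must be an embedding because $\pi$ is irreducible. The main obstacle is the identification $\sigma_1\cong\tau_1$ in the middle step; this is essentially routine, but some care is needed because no a priori finite-length or admissibility hypothesis is made on $\tau_2$ itself, so one must work inside the finite-length subrepresentation $\tau'$ where such arguments become available.
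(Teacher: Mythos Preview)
Your proof is correct and is the standard argument for this elementary fact. The paper does not actually supply a proof of this lemma: it simply records it as ``the elementary Lemma 3.2 of \cite{Moe-T}'' and uses it freely thereafter, so there is nothing to compare against beyond noting that your Frobenius-reciprocity argument is exactly the expected one. Your handling of the one subtle point---that no finite-length hypothesis is imposed on $\tau_2$---is fine: since $\tau_1\otimes\tau_2$ restricted to $M_1$ is $\tau_1$-isotypic, every $M_1$-subquotient remains $\tau_1$-isotypic, which forces $\sigma_1\cong\tau_1$ without needing to filter $\tau_2$.
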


We shall often use the following 

\begin{proposition}\label{Pr-jb-jac} Let $\pi$ be an irreducible square
integrable representation of  $S_q$. 
\begin{enumerate}
\item
Suppose that $\nu^x\rho\otimes\tau$ is an
irreducible subquotient of a standard Jacquet module of $\pi$,  where
$\rho$ is an irreducible  $F'/F$-selfdual cuspidal representation of
$GL(p,F')$,
$x\in \mathbb R$, and $\tau$ is an irreducible representation of
$S_{q'}$. Then
$$ 
 (\rho,2x+1)\in Jord(\pi).
$$
 
 \item More generally, let $\sigma\otimes\tau$ be an
irreducible subquotient of a standard Jacquet module of $\pi$,  where
$\s$ is an irreducible   representation of
$GL(p,F')$,
 and $\tau$ is an irreducible representation of
$S_{q'}$. Then there exists an irreducible cuspidal representation $\rho'$ in the cuspidal support of $\s$ such that if we write $\rho'=\nu^x\rho$ with $x\in\mathbb R$ and $\rho$ unitarizable, then
$$ 
 (\rho,2x+1)\in Jord(\pi).
$$
 
 \end{enumerate}
\end{proposition}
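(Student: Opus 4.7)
The strategy is to reduce part (1) to an application of Proposition~\ref{Pr-jb-main} by upgrading the given subquotient condition to a suitable embedding, and to derive part (2) from part (1) by transitivity of Jacquet modules.

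For part (1), Casselman's square integrability criterion applied to $\pi$ forces $x>0$ for any irreducible subquotient $\nu^x\rho\o\tau$ of $s_{(d_\rho)}(\pi)$. As a square integrable representation, $\pi$ embeds into a standard module $\d(\Delta_1)\t\cdots\t\d(\Delta_m)\r\pc$ coming from its M\oe glin-Tadi\'c parameters, with each $\Delta_j=[\nu^{y_j}\rho_j,\nu^{z_j}\rho_j]$ a segment of selfdual cuspidals and $e(\d(\Delta_j))>0$. Applying $\mu^*$ to this embedding and expanding via \eqref{Eq-jm} and \eqref{Eq-MGL-seg}, one sees that an irreducible cuspidal of the form $\nu^x\rho$ (of rank exactly $d_\rho$) can appear as the first factor of an irreducible subquotient of $s_{(d_\rho)}(\pi)$ only from one of the two extremal terms of $M^*\bigl(\d(\Delta_{i_0})\bigr)$ for some single segment $\Delta_{i_0}$ with $\rho_{i_0}=\rho$, meaning that either $z_{i_0}=x$ or $y_{i_0}=-x$. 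In either case, after reorganizing the standard module by means of the associativity~\eqref{iso} and Lemma~\ref{Le-e2}, one obtains an embedding
$$
\pi\ \h\ \d(\Delta_{i_0})\r\pi^{*}
$$
for some irreducible square integrable $\pi^{*}$ of a smaller classical group, the square integrability of $\pi^{*}$ following by checking that removing $\Delta_{i_0}$ from the M\oe glin-Tadi\'c data of $\pi$ leaves valid parameters of a square integrable representation. Proposition~\ref{Pr-jb-main} applied to this embedding then yields $(\rho,2x+1)\in\jp$.

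For part (2), let $\s\o\tau$ be an irreducible subquotient of $s_{(k)}(\pi)$. By the M\oe glin-Tadi\'c classification, the $GL$-part of the cuspidal support of the square integrable $\pi$ consists of $\nu^y\rho$'s with $\rho$ a selfdual unitarizable cuspidal, hence the cuspidal support of $\s$ is of the same form; fix any $\rho'=\nu^x\rho$ in the cuspidal support of $\s$. Bernstein-Zelevinsky theory for $GL$ produces an irreducible $\s'$ such that $\rho'\o\s'$ is an irreducible subquotient of $r_{(d_{\rho'},k-d_{\rho'})}(\s)$. Transitivity of Jacquet modules then yields that $\rho'\o\s'\o\tau$ is a subquotient of
$$
s_{(d_{\rho'},k-d_{\rho'})}(\pi)\ =\ (\mathrm{id}\o s_{(k-d_{\rho'})})\,s_{(d_{\rho'})}(\pi),
$$
so some irreducible $\rho'\o\pi^{*}$ is a subquotient of $s_{(d_{\rho'})}(\pi)$, and part~(1) applied here gives $(\rho,2x+1)\in\jp$.

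The main obstacle lies in part (1), specifically in the reorganization step: isolating $\d(\Delta_{i_0})$ as the front factor of the induced representation while preserving irreducibility of the complement $\pi^{*}$ and establishing its square integrability. This requires either a direct combinatorial argument on the M\oe glin-Tadi\'c parameters, or a finer analysis of the $M^*$-formula combined with Lemma~\ref{Le-e2} to extract from an arbitrary embedding of $\pi$ into an induced representation the specific embedding whose leading factor is $\d(\Delta_{i_0})$.
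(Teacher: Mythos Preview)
Your approach to part (2) is essentially the paper's: transitivity of Jacquet modules to extract a cuspidal $\rho'\o\mu$ at the front, then apply (1). One correction: you claim that for \emph{any} $\rho'$ in the cuspidal support of $\s$ there is $\s'$ with $\rho'\o\s'\leq r_{(d_{\rho'},k-d_{\rho'})}(\s)$. This fails in general (e.g.\ $\s=\d([\rho,\nu\rho])$ has Jacquet module $\nu\rho\o\rho$, so $\rho$ does not appear on the left). The statement only asserts existence, so take \emph{some} $\rho'$ that does appear on the left; this always exists and the rest of your argument goes through.

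For part (1) your plan is both incomplete and unnecessarily heavy, and you correctly identify the obstacle yourself. The paper bypasses all of the M\oe glin--Tadi\'c parameter manipulation with a one-line observation: since $\nu^x\rho$ is \emph{cuspidal}, (3) of Corollary~\ref{Cor-bc} (the Bernstein center argument) immediately upgrades the subquotient condition $\nu^x\rho\o\tau\leq s_{(d_\rho)}(\pi)$ to an actual embedding
\[
\pi\ \h\ \nu^x\rho\r\s'
\]
for some irreducible $\s'$, with no need to control what $\s'$ is or to prove it square integrable. From this embedding, the conclusion $(\rho,2x+1)\in\jp$ is exactly Remark~5.1.2 of \cite{Moe-Ex} (equivalently Lemma~3.6 of \cite{Moe-T}). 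So the ``main obstacle'' you describe --- isolating a full segment $\d(\Delta_{i_0})$ at the front and showing the complement is square integrable --- simply does not arise: you only need a single cuspidal $\nu^x\rho$ at the front, and cuspidality gives that for free via the Bernstein center. Your proposed reorganization via Lemma~\ref{Le-e2} and combinatorics of the standard module would require substantial extra work (commuting segments, checking square integrability of the remainder) and is not needed.
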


One can find the definition of the cuspidal support in Remark \ref{GL-inf} in what follows.
 
 \begin{proof}
 The claim (1) is just Lemma 3.6 of \cite{Moe-T} (for a slightly different argument observe that (3) of Corollary \ref{Cor-bc} implies that $\pi\h \nu^x\rho\r\s$ for some irreducible representation $\s$; now Remark 5.1.2 of \cite{Moe-Ex} implies (1)).

 For the second claim, observe that the
  the transitivity of Jacquet modules implies that there exists an irreducible cuspidal representation $\rho'$ in the cuspidal support of $\s$ and an irreducible representation $\mu$ of a classical group such that $\rho'\o\mu$ is a subquotient of the Jacquet module of $\pi$. Now (1) implies (2).
 \end{proof}

Let $(\rho,a)$ be a Jordan block of an irreducible square integrable representation $\pi$ of a classical group. If there exists the element  $b$ in $\jrp$ such that $b<a$, and
$
\{x\in\jrp;b<x<a\}=\emptyset,
$
then 
we shall denote $b$ by 
$$
a_-.
$$
In this case we shall say that $a\in\jrp$ has $a_-$.

\section{First tempered reductions}\label{Primitive}
\setcounter{equation}{0}

We recall  the definition of cuspidal support in the case of general linear groups.  If an irreducible representation $\pi$ of a general linear group is a subquotient of $\rho_1\t\dots\t\rho_k$, then the multiset $(\rho_1,\dots,\rho_k)$ is called the cuspidal support of $\pi$, and denoted by
$$
\text{supp}(\pi).
$$

\begin{lemma}
\label{Le-def-main}
 Let $\pi$ be an irreducible square integrable representation
of a classical group, $ \rho$ an $F'/F$-selfdual irreducible cuspidal representation of a general linear group and $b\in\mathbb Z_{>0}$ such that 
$$
\d(\rho,b)\r\pi
$$
reduces (equivalently, $\rho$ and $b$ satisfy (J1),  and $b\not\in\jrp$).

Suppose
\begin{equation}
\label{Eq-PR1}
\jrp\cap [1,b]\ne\emptyset
\end{equation}
and denote
\begin{equation}
\label{Eq-PR2}
a
=\max(\jrp \cap[1,b]
).
\end{equation}
Then:

\begin{enumerate}
\item
There exists an irreducible representation  $\theta\o\sigma$   satisfying
$$
\theta\o\sigma\leq \mu^*(\d(\rho,b)\r\pi)
$$
   and
\begin{equation}
\label{supp}
 \text{supp}(\theta)= [\nu^{(a-1)/2+1}\rho,\nu^{(b-1)/2}\rho]
+[\nu^{(a-1)/2+1}\rho,\nu^{(b-1)/2}\rho].
\end{equation}
Further, such $\theta$ and $\s$ are unique, and satisfy
$$
\theta\o\sigma \cong
\d([\nu^{(a-1)/2+1}\rho,\nu^{(b-1)/2}\rho])^2\o\d(\rho,a)\r\pi.
$$

\item
The multiplicity of
$$
\d([\nu^{(a-1)/2+1}\rho,\nu^{(b-1)/2}\rho])^2\o\d(\rho,a)\r\pi
$$
in
$$
\mu^*(\d(\rho,b)\r\pi)
$$
is one.

\item
$
\d([\nu^{(a-1)/2+1}\rho,\nu^{(b-1)/2}\rho])^2\o\d(\rho,a)\r\pi
$ is a direct summand in the corresponding Jacquet module of
$\d(\rho,b)\r\pi$.

\end{enumerate}
\end{lemma}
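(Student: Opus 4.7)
The plan is to exploit the Hopf algebra formula $\mu^*(\d(\rho,b)\rtimes\pi)=M^*(\d(\rho,b))\rtimes\mu^*(\pi)$ from \eqref{Eq-jm} together with the explicit segment expansion \eqref{Eq-M-seg} of $M^*\!\big(\d([\nu^{-(b-1)/2}\rho,\nu^{(b-1)/2}\rho])\big)$, and then to apply Proposition \ref{Pr-jb-jac} together with the maximality of $a$ to single out the relevant term. Every irreducible subquotient of $\mu^*(\d(\rho,b)\rtimes\pi)$ then arises as a subquotient of some
$$
\d([\nu^{-i}\rho,\nu^{(b-1)/2}\rho])\t\d([\nu^{j+1}\rho,\nu^{(b-1)/2}\rho])\t\alpha\ \o\ \d([\nu^{i+1}\rho,\nu^j\rho])\rtimes\beta,
$$
for some admissible pair $(i,j)$ in \eqref{Eq-M-seg} and some irreducible $\alpha\o\beta\leq\mu^*(\pi)$.

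The support condition \eqref{supp} demands that the combined GL-cuspidal support equal the multiset $2\cdot[\nu^{(a+1)/2}\rho,\nu^{(b-1)/2}\rho]$. By Proposition \ref{Pr-jb-jac}, every cuspidal in $\text{supp}(\alpha)$ has the form $\nu^c\rho'$ with $(\rho',2c+1)\in\jp$; forcing such a cuspidal to lie in $[\nu^{(a+1)/2}\rho,\nu^{(b-1)/2}\rho]$ would give $\rho'=\rho$ and $2c+1\in\jrp\cap[a+2,b]$, which is empty by the maximality of $a$. Hence $\alpha=1$, so $\beta=\pi$ (coming from the $k=0$ slot of $\mu^*(\pi)$), and matching the remaining multiset support forces $-i=j+1=(a+1)/2$. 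The unique resulting contribution is
$$
\d([\nu^{(a-1)/2+1}\rho,\nu^{(b-1)/2}\rho])^{2}\ \o\ \d(\rho,a)\rtimes\pi,
$$
which is irreducible: the GL-factor is the product of two equal and hence unlinked segments, and the classical-group factor is irreducible by (J2) since $a\in\jrp$. This establishes (1) and the multiplicity-one assertion (2).

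For (3), I would use that $\d(\rho,b)\rtimes\pi$ is unitary tempered and therefore a direct sum of its irreducible summands; exactness and additivity of the Jacquet functor yield a corresponding direct sum decomposition of the relevant Jacquet module. Combining the multiplicity-one statement with Frobenius reciprocity (realizing $\theta\o\s$ simultaneously as a subrepresentation and a quotient of the block of the Jacquet module singled out by its cuspidal support), a Schur-type argument isolates $\theta\o\s$ as a direct summand. The principal obstacle is precisely this upgrade in (3) from multiplicity-one subquotient to genuine direct summand: parts (1) and (2) reduce to a support-matching calculation in $R\o R(S)$ constrained by the Jordan block condition of Proposition \ref{Pr-jb-jac}, whereas (3) requires finer bookkeeping of the geometric lemma filtration together with the Bernstein decomposition of the Jacquet module.
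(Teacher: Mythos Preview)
Your argument for (1) and (2) is essentially the paper's: expand $\mu^*(\d(\rho,b)\rtimes\pi)=M^*(\d(\rho,b))\rtimes\mu^*(\pi)$ via \eqref{Eq-M-seg}, use the support constraint to pin down the indices, and invoke Proposition~\ref{Pr-jb-jac} together with the maximality of $a$ to kill any nontrivial contribution from $\mu^*(\pi)$. One small imprecision: Proposition~\ref{Pr-jb-jac}(2) only guarantees that \emph{some} cuspidal in $\text{supp}(\alpha)$ has the Jordan block form, not that every one does. This does not damage your argument, since $\text{supp}(\alpha)$ is already forced (by the support condition on $\theta$) to lie entirely inside $[\nu^{(a+1)/2}\rho,\nu^{(b-1)/2}\rho]$, so the single cuspidal produced by Proposition~\ref{Pr-jb-jac}(2) already lands in the forbidden range $\jrp\cap[a+2,b]=\emptyset$; but you should state it that way.

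For (3) your route diverges from the paper and is not quite right as written. The semisimplicity of $\d(\rho,b)\rtimes\pi$ is a red herring: decomposing the induced representation and then its Jacquet module still leaves you with the problem of showing that $\theta\otimes\sigma$ is a direct summand \emph{inside} the Jacquet module of a single irreducible piece, and Jacquet modules of irreducibles are generally not semisimple. Your ``simultaneously a sub and a quotient via Frobenius'' step does not follow from what you have. The paper's argument is cleaner and uses exactly the uniqueness you proved in (1): by Lemma~\ref{Le-bc} (Bernstein center for a direct product Levi), the $[\chi_\theta]$-block of the relevant Jacquet module is an $M$-direct summand; part (1) says the only irreducible subquotient of this block is $\theta\otimes\sigma$, and part (2) says its multiplicity is one, so the block \emph{equals} $\theta\otimes\sigma$. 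That is the whole proof of (3). You do mention the Bernstein decomposition at the end, so you have the right tool in hand---just drop the detour through unitarity and apply it directly.
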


\begin{proof} Observe that $b\geq 3$.
 Suppose that $\theta\o\s$ is some irreducible representation satisfying the assumption \eqref{supp}.
 We shall now consider the formula for $\mu^*(\d(\rho,b)\r\pi)$, determine the terms of the sums where representation of type $\theta\o\s$ can  be a subquotient, determine $\theta\o\s$ and determine the multiplicity of $\theta\o\s$.
 
  From \eqref{Eq-jm} and \eqref{Eq-M-seg} follows  that $\mu^*(\d(\rho,b)\r\pi)$ is
   \begin{equation}
\label{Eq-main}
\left(\sum_{i= -(b-1)/2-1}^{ (b-1)/2} 
\sum_{j=i}^{ (b-1)/2}
\delta([\nu^{-i}\rho,\nu^{(b-1)/2}\rho]) 
 \times
\delta([\nu^{j+1} \rho,\nu^{(b-1)/2}\rho]) \otimes
\delta([\nu^{i+1} \rho,\nu^{j}\rho])\right)\r\mu^*(\pi).
\end{equation}
We shall now analyze in the above sums  indexes $i$ and $j$   for which we have possibility to get a representation $\theta\o\s$ satisfying \eqref{supp} for a subquotient.
The condition on the support of $\theta$ implies that if $\theta\o\s$ as above is a subquotient, then the indexes must satisfy 
\begin{equation}
\label{Eq-leq}
(a-1)/2+1\leq -i \text{ and } (a-1)/2+1\leq j+1.
\end{equation}

Suppose now that $\theta \o\s$ as above is a subquotient of some term corresponding to indexes $i$ and $j$ which satisfy
\begin{equation}
\label{Eq-<}
(a-1)/2+1< -i \text{ or }(a-1)/2+1< j+1.
\end{equation}
Then there exists an irreducible subquotient $\gamma\o\lambda$ of $\mu^*(\pi)$ such that 
$$
\theta\o\s\leq \delta([\nu^{-i}\rho,\nu^{(b-1)/2}\rho]) 
 \times
\delta([\nu^{j+1} \rho,\nu^{(b-1)/2}\rho])\t\gamma\o \delta([\nu^{i+1} \rho,\nu^{j}\rho])\r \lambda.
$$
 The assumption on the cuspidal support of $\theta$ implies supp$(\gamma)=[\nu^{(a-1)/2+1}\rho,\nu^{-i-1}\rho]+[\nu^{(a-1)/2+1}\rho,\nu^{j}\rho]$. Recall that our assumption \eqref{Eq-<} implies supp$(\gamma)\ne\emptyset$. Now  (2) of Proposition \ref{Pr-jb-jac} implies the existence of 
 $$
 \nu^x\rho\in 
 [\nu^{(a-1)/2+1}\rho,\nu^{-i-1}\rho]\cup[\nu^{(a-1)/2+1}\rho,\nu^{j}\rho]
 $$
 such that $2x+1\in\jrp$. Observe that $(a-1)/2+1\leq x$ implies 
 $$
 a+2\leq 2x+1.
 $$
 From the other side, $i\geq -(b-1)/2-1$ implies $x\leq -i-1\leq (b-1)/2$ which further implies $2x+1\leq b$, and $j\leq (b-1)/2$ implies again $2x+1\leq 2j+1\leq b$. 
 
 Therefore, we have obtained that \eqref{Eq-<} implies $[a+2,b]\cap \jrp\ne \emptyset$. This contradicts the assumption \eqref{Eq-PR2}. This contradiction implies that $\theta \o\s$ as above can not be a subquotient of some term corresponding to indexes $i$ and $j$ which satisfy \eqref{Eq-<}.
 
 Thus, if $\theta \o\s$ as above is a subquotient of some term corresponding to indexes $i$ and $j$, then these indexes must satisfy
  $
(a-1)/2+1= -i \text{ and } (a-1)/2+1= j+1$. 

Observe that if we take in \eqref{Eq-main} the term from the double sum corresponding to indexes $
(a-1)/2+1= -i \text{ and } (a-1)/2+1= j+1$, and from $\mu^*(\pi)$ the term $1\o\pi$, then we get in \eqref{Eq-main} the term
$$
\delta([\nu^{(a-1)/2+1}\rho,\nu^{(b-1)/2}\rho])^2
\o \delta([\nu^{-(a-1)/2} \rho,\nu^{(a-1)/2}\rho])\r \pi.
$$
Obsetve that this representation is irreducible, and satisfies \eqref{supp}. This shows that that there is at least on $\theta\o\s$ as above.

Suppose now  that $\theta \o\s$ is any representation as above (i.e. satisfying  \eqref{supp}), which is a subquotient of $\mu^*(\d(\rho,b)\r\pi)$ (we have seen that there exists  at least one such representation).
Then 
$$
\theta\o\s\leq \delta([\nu^{(a-1)/2+1}\rho,\nu^{(b-1)/2}\rho])^2 \t\gamma\o \delta([\nu^{-(a-1)/2} \rho,\nu^{(a-1)/2}\rho])\r \lambda
$$
for some $\gamma\o\lambda\leq \mu^*(\pi)$.
Now the assumption on the cuspidal support implies that $\gamma=1$. Therefore 
$1\o\lambda \leq \mu^*(\pi)$, which implies $1\o\lambda\cong1\o\pi$. Thus
$$
\theta\o\s\leq 
\delta([\nu^{(a-1)/2+1}\rho,\nu^{(b-1)/2}\rho])^2
\o \delta([\nu^{-(a-1)/2} \rho,\nu^{(a-1)/2}\rho])\r \pi.
$$
Since the  representation on the right hand side is irreducible,  we have the equality above. Further, since the multiplicity of $1\o\pi$ in $\mu^*(\pi)$ is one, we conclude that the multiplicity of $\theta \o\s$ in $\mu^*(\d(\rho,b)\r\pi)$ is one.

This completes the proof of (1) and (2) of the lemma. Further, using the properties of the Bernstein center (see section \ref{elem-b-c}) (3) follows from (1).
\end{proof}

\begin{corollary}
\label{Cor-1}
Let the assumptions be the same as in the previous lemma. Then the following  requirements on an irreducible subquotient $\tau$ of $\d(\rho,b)\r\pi$ are equivalent:

\begin{enumerate}

\item $\tau$ embeds into $\delta([\nu^{(a-1)/2+1}\rho,\nu^{(b-1)/2}\rho])^2
\t \delta([\nu^{-(a-1)/2} \rho,\nu^{(a-1)/2}\rho])\r \pi$.

\item $\tau$ embeds into $\delta([\nu^{(a-1)/2+1}\rho,\nu^{(b-1)/2}\rho])^2
\r \lambda$ for some irreducible representation $\lambda$.

\item $\tau$ embeds into $\theta
\r \lambda$ for some irreducible representations $\theta$ and $\lambda$, such that supp$(\theta)=2[\nu^{(a-1)/2+1}\rho,\nu^{(b-1)/2}\rho].$

\item $\tau$ has $\delta([\nu^{(a-1)/2+1}\rho,\nu^{(b-1)/2}\rho])^2
\o\delta([\nu^{-(a-1)/2} \rho,\nu^{(a-1)/2}\rho])\r \pi$ for  a subquotient of the corresponding  Jacquet module.

\item $\tau$ has $\delta([\nu^{(a-1)/2+1}\rho,\nu^{(b-1)/2}\rho])^2
\o \lambda$ for  a subquotient of the corresponding  Jacquet module,  for some irreducible representation $\lambda$.

\item $\tau$ has $\theta
\o \lambda$ for  a subquotient of the corresponding  Jacquet module, for some irreducible representations $\theta$ and $\lambda$, such that supp$(\theta)=2[\nu^{(a-1)/2+1}\rho,\nu^{(b-1)/2}\rho].$ \qed

\end{enumerate} 

\end{corollary}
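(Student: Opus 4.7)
The six conditions split into two nested groups---the embedding conditions (1)--(3) and the Jacquet module subquotient conditions (4)--(6)---and within each group the implications go by pure weakening: (1)$\Rightarrow$(2)$\Rightarrow$(3) follows by taking $\lambda=\d(\rho,a)\r\pi$ (which is irreducible since $a\in\jrp$) together with $\theta=\d([\nu^{(a-1)/2+1}\rho,\nu^{(b-1)/2}\rho])^{\o 2}$, and (4)$\Rightarrow$(5)$\Rightarrow$(6) is analogous. The ``same-side'' cross implications (1)$\Rightarrow$(4) and (3)$\Rightarrow$(6) are Frobenius reciprocity: an embedding $\tau\h\text{Ind}(\theta\o\sigma)$ forces $\theta\o\sigma$ to be a quotient of the corresponding Jacquet module of $\tau$, hence a subquotient.

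To close the loop it remains to handle (6)$\Rightarrow$(4) and (4)$\Rightarrow$(1). For (6)$\Rightarrow$(4) one uses the inequality $\mu^*(\tau)\le \mu^*(\d(\rho,b)\r\pi)$, so that any $\theta\o\lambda\le\mu^*(\tau)$ with the prescribed cuspidal support also lies inside $\mu^*(\d(\rho,b)\r\pi)$; the uniqueness statement in Lemma \ref{Le-def-main}(1) then forces $\theta\o\lambda\cong \d([\nu^{(a-1)/2+1}\rho,\nu^{(b-1)/2}\rho])^{\o 2}\o\d(\rho,a)\r\pi$, which is (4).

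The main obstacle is (4)$\Rightarrow$(1), since one must promote a subquotient condition to an honest embedding. Write $W_1:=\d([\nu^{(a-1)/2+1}\rho,\nu^{(b-1)/2}\rho])^{\o 2}\o\d(\rho,a)\r\pi$ (irreducible, as $a\in\jrp$), and $V:=\d(\rho,b)\r\pi$. Lemma \ref{Le-def-main}(2),(3) give a decomposition $s_{(k)}(V)=W_1\oplus W_2$ at the appropriate Levi, with $W_1$ not appearing as a subquotient of $W_2$. Pick any composition series $0=V_0\subset\dots\subset V_n=V$ and let $\tau_0:=V_{i_0}/V_{i_0-1}$ be the unique composition factor with $W_1\le s_{(k)}(\tau_0)$. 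Since $W_1\not\le s_{(k)}(V_{i_0-1})$, the inclusion $s_{(k)}(V_{i_0-1})\h W_1\oplus W_2$ has image inside $W_2$, so $s_{(k)}(V/V_{i_0-1})=W_1\oplus (W_2/s_{(k)}(V_{i_0-1}))$ retains $W_1$ as a direct summand, in particular as a quotient. Via Frobenius this yields a nonzero map $V/V_{i_0-1}\to\text{Ind}(W_1)$; restricting to $\tau_0\h V/V_{i_0-1}$, the induced map $s_{(k)}(\tau_0)\to W_1$ is the projection onto the $W_1$-summand and is nonzero (because $W_1\le s_{(k)}(\tau_0)$ but $W_1$ is not a subquotient of $W_2/s_{(k)}(V_{i_0-1})$), so Frobenius delivers an embedding $\tau_0\h\text{Ind}(W_1)$, which is condition (1). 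Finally, any $\tau$ satisfying (4) must coincide with $\tau_0$ by the multiplicity-one statement in Lemma \ref{Le-def-main}(2), and the argument closes.
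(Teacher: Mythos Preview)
Your proof is correct. The paper records the corollary with a bare \qed, so there is no explicit argument to compare against; you have supplied the details the paper leaves implicit, and the logical skeleton $(1)\Rightarrow(2)\Rightarrow(3)\Rightarrow(6)\Rightarrow(4)\Rightarrow(1)$ together with $(4)\Rightarrow(5)\Rightarrow(6)$ is exactly the natural one.

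One small remark on the step $(4)\Rightarrow(1)$: your composition-series argument in the ambient representation $V=\d(\rho,b)\r\pi$ is fine, but the paper's machinery suggests a marginally shorter route. Applying Corollary~\ref{Cor-bc}(4) directly to the irreducible $\tau$: every irreducible subquotient of $(s_{(k)}(\tau))_{[\chi_\theta]}$ has GL-support $2[\nu^{(a-1)/2+1}\rho,\nu^{(b-1)/2}\rho]$, so by Lemma~\ref{Le-def-main}(1) (and $\mu^*(\tau)\le\mu^*(V)$) each such subquotient is isomorphic to $W_1$, whence $\tau\hookrightarrow\text{Ind}(W_1)$. This avoids picking a composition series of $V$, but is the same idea packaged via the Bernstein center; your version has the merit of making the role of the direct-summand statement~(3) of the lemma completely explicit.
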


\begin{definition}
\label{Def-1}
With the assumptions  as in the previous lemma,  the  irreducible subquotient $\tau$ of $\d(\rho,b)\r\pi$ satisfying the equivalent requirements of the above corollary, will be denoted by
$$
\pi_{\d(\rho,b)}.
$$
The other irreducible subrepresentation of $\d(\rho,b)\r\pi$ will be denoted by
$$
\pi_{-\d(\rho,b)}.
$$

\end{definition}

\begin{lemma}
\label{Le-def-even}
 Let $\pi$ be an irreducible square integrable representation
of a classical group, $ \rho$ an $F'/F$-selfdual irreducible cuspidal representation of a general linear group and 
$$
b\in2\mathbb Z_{>0}
$$
 such that 
$$
\d(\rho,b)\r\pi
$$
reduces (i.e., $\rho$ and $b$ satisfy (J1),  and $b\not\in\jrp$).

Suppose
\begin{equation}
\label{Eq-PR3}
\jrp\cap [1,b]=\emptyset.
\end{equation}
 Then

\begin{enumerate}
\item
There exists an irreducible representation  $\theta\o\s$  satisfying $\theta\o\sigma\leq \mu^*(\d(\rho,b)\r\pi)$ and
$$
 \text{supp}(\theta)= [\nu^{1/2}\rho,\nu^{(b-1)/2}\rho]
+[\nu^{1/2}\rho,\nu^{(b-1)/2}\rho].
$$
Such $\theta$ and $\s$ are unique, and holds
$$
\theta\o\sigma \cong
\d([\nu^{1/2}\rho,\nu^{(b-1)/2}\rho])^2\o\pi.
$$

\item
 The multiplicity of
$$
\d([\nu^{1/2}\rho,\nu^{(b-1)/2}\rho])^2\o\pi
$$
in
$$
\mu^*(\d(\rho,b)\r\pi)
$$
is one.

\item
$
\d([\nu^{1/2}\rho,\nu^{(b-1)/2}\rho])^2\o\pi
$
is a direct summand in the corresponding Jacquet module of
$
\d(\rho,b)\r\pi.
$
\end{enumerate}
\end{lemma}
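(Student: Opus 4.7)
The proof should proceed in exact parallel with Lemma \ref{Le-def-main}, with the case analysis being strictly simpler because the assumption $\jrp \cap [1,b] = \emptyset$ is global (not only to the right of some $a$). Here is the plan.

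First, I would expand $\mu^*(\delta(\rho,b)\rtimes\pi)$ using \eqref{Eq-jm} together with \eqref{Eq-M-seg}, obtaining
$$
\mu^*(\d(\rho,b)\r\pi) = \left(\sum_{i=-(b-1)/2-1}^{(b-1)/2}\sum_{j=i}^{(b-1)/2}\delta([\nu^{-i}\rho,\nu^{(b-1)/2}\rho])\times\delta([\nu^{j+1}\rho,\nu^{(b-1)/2}\rho])\o\delta([\nu^{i+1}\rho,\nu^{j}\rho])\right)\r\mu^*(\pi).
$$
Note that since $b$ is even, all indices $i,j$ in the sum are half-integers. Now suppose $\theta\o\s$ is any irreducible subquotient with $\mathrm{supp}(\theta) = 2[\nu^{1/2}\rho,\nu^{(b-1)/2}\rho]$. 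Tracking supports, such a subquotient can only come from terms $(i,j)$ with $-i \geq 1/2$ and $j+1 \geq 1/2$, paired with some $\gamma\o\lambda \leq \mu^*(\pi)$ whose $\gamma$ has support $[\nu^{1/2}\rho,\nu^{-i-1}\rho] + [\nu^{1/2}\rho,\nu^{j}\rho]$.

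The key step is to eliminate all contributions with $\mathrm{supp}(\gamma) \neq \emptyset$. Assume for contradiction that one of the inequalities $-i \geq 3/2$ or $j \geq 1/2$ holds. Then $\mathrm{supp}(\gamma)$ contains some $\nu^x\rho$ with $x \geq 1/2$, and part (2) of Proposition \ref{Pr-jb-jac} forces $2x+1 \in \jrp$. Since $x \leq (b-1)/2$, we get $2 \leq 2x+1 \leq b$, so $2x+1 \in \jrp \cap [1,b]$, contradicting the hypothesis. (Note that $2x+1$ is automatically even, which is compatible with condition (J1) in the case $b$ even.) Therefore the only admissible pair is $i = j = -1/2$ together with $\gamma\o\lambda = 1\o\pi$, and this term gives exactly
$$
\delta([\nu^{1/2}\rho,\nu^{(b-1)/2}\rho])^{\otimes 2}\o\pi,
$$
which is irreducible. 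This establishes existence and uniqueness in part (1).

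For part (2), the multiplicity one statement follows immediately from the same analysis: the factor $1\o\pi$ occurs with multiplicity one in $\mu^*(\pi)$, and that is the only source for $\theta\o\s$. For part (3), the direct summand assertion follows from the Bernstein center argument developed in section \ref{elem-b-c} (same citation used in the proof of Lemma \ref{Le-def-main}), since the infinitesimal character of $\delta([\nu^{1/2}\rho,\nu^{(b-1)/2}\rho])^{\otimes 2}\o\pi$ is separated from the infinitesimal characters of the other Jacquet module constituents in its Levi. The main subtlety, as in the preceding lemma, is the cuspidal support bookkeeping when extracting $\gamma$ from $\mu^*(\pi)$; but since the hypothesis $\jrp\cap[1,b] = \emptyset$ is stronger than what was used in Lemma \ref{Le-def-main}, no new obstruction arises.
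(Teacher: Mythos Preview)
Your proposal is correct and follows exactly the approach of the paper, which merely sketches the argument by saying that one needs $1/2\leq -i$ and $1/2\leq j+1$, and that strict inequality in either forces an element of $\jrp\cap[1,b]$ via Proposition~\ref{Pr-jb-jac}; your write-up simply fills in the details the paper omits. One small notational slip: where you write $\delta([\nu^{1/2}\rho,\nu^{(b-1)/2}\rho])^{\otimes 2}\otimes\pi$, you mean $\delta([\nu^{1/2}\rho,\nu^{(b-1)/2}\rho])^{2}\otimes\pi$ (the product $\times$ in $R$, not the tensor product), to match the Jacquet module in the statement.
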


\begin{proof}
The proof proceeds in a very similar way as the proof of Lemma \ref{Le-def-main}. We shall very briefly sketch the proof. To get any $\theta\o\s$ satisfying (1) from \eqref{Eq-main}, we must take $1/2\leq -i$ and $1/2\leq j+1$. If at least one of these two inequalities is strict, we would get that some positive even integer $k\leq b$ is in $\jrp$, which contradicts the assumption of the lemma. Now the proof proceeds in exactly the same way as the proof of Lemma \ref{Le-def-main}. 
\end{proof}

\begin{corollary}
\label{Cor-2}
Let the assumptions be the same as in the previous lemma. Then the following  requirements on an irreducible subquotient $\tau$ of $\d(\rho,b)\r\pi$ are equivalent:

\begin{enumerate}

\item $\tau$ embeds into $\delta([\nu^{1/2}\rho,\nu^{(b-1)/2}\rho])^2
\r \pi$.

\item $\tau$ embeds into $\delta([\nu^{1/2}\rho,\nu^{(b-1)/2}\rho])^2
\r \lambda$ for some irreducible representation $\lambda$.

\item $\tau$ embeds into $\theta
\r \lambda$ for some irreducible representations $\theta$ and $\lambda$, such that supp$(\theta)=2[\nu^{1/2}\rho,\nu^{(b-1)/2}\rho].$

\item $\tau$ has $\delta([\nu^{1/2}\rho,\nu^{(b-1)/2}\rho])^2
\o \pi$ for  a subquotient of its Jacquet module.

\item $\tau$ has $\delta([\nu^{1/2}\rho,\nu^{(b-1)/2}\rho])^2
\o \lambda$ for  a subquotient of its Jacquet module,  for some irreducible representation $\lambda$.

\item $\tau$ has $\theta
\o \lambda$ for  a subquotient of its Jacquet module, for some irreducible representations $\theta$ and $\lambda$, such that supp$(\theta)=2[\nu^{1/2}\rho,\nu^{(b-1)/2}\rho].$ \qed

\end{enumerate} 

\end{corollary}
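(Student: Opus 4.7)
The corollary asserts the equivalence of six conditions on an irreducible subquotient $\tau$ of $\d(\rho,b)\r\pi$ under the hypotheses of Lemma~\ref{Le-def-even}. My plan is to establish the two ``trivial'' chains of specializations (1)$\Rightarrow$(2)$\Rightarrow$(3) and (4)$\Rightarrow$(5)$\Rightarrow$(6), then close the diamond with (1)$\Leftrightarrow$(4) using Lemma~\ref{Le-def-even} and Frobenius reciprocity, and finally collapse (3)--(6) back to (4). Throughout I exploit the fact that, by Goldberg's theorem, $\d(\rho,b)\r\pi$ splits as a direct sum of two inequivalent irreducible tempered representations, so that every irreducible subquotient $\tau$ is in fact a subrepresentation and the Jacquet module functor is exact on this picture.

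First, the implications (1)$\Rightarrow$(2)$\Rightarrow$(3) are obtained by taking $\lambda=\pi$ and then $\theta=\d([\nu^{1/2}\rho,\nu^{(b-1)/2}\rho])^2$; similarly (4)$\Rightarrow$(5)$\Rightarrow$(6). For (1)$\Rightarrow$(4) and (3)$\Rightarrow$(6), I invoke Frobenius reciprocity: an embedding $\tau\h\theta\r\lambda$ gives $\theta\o\lambda$ as a quotient (hence subquotient) of the appropriate Jacquet module of $\tau$.

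The central step is (6)$\Rightarrow$(4). Suppose $\theta\o\lambda$ is a subquotient of the relevant Jacquet module of $\tau$, with $\text{supp}(\theta)=2[\nu^{1/2}\rho,\nu^{(b-1)/2}\rho]$. Since $\tau\leq\d(\rho,b)\r\pi$ and the normalized Jacquet functor preserves $\leq$, the same $\theta\o\lambda$ is a subquotient of $\mu^*(\d(\rho,b)\r\pi)$. But Lemma~\ref{Le-def-even}(1) says that the \emph{unique} irreducible subquotient of $\mu^*(\d(\rho,b)\r\pi)$ whose first factor has this cuspidal support is $\d([\nu^{1/2}\rho,\nu^{(b-1)/2}\rho])^2\o\pi$. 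Hence $\theta\o\lambda\cong \d([\nu^{1/2}\rho,\nu^{(b-1)/2}\rho])^2\o\pi$, which is (4).

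Finally, for (4)$\Rightarrow$(1), I use parts (2) and (3) of Lemma~\ref{Le-def-even}. Since $\d(\rho,b)\r\pi$ decomposes as $\tau_{+}\oplus\tau_{-}$ with $\tau_\pm$ irreducible, exactness of the Jacquet functor writes the relevant Jacquet module as a direct sum of those of $\tau_\pm$. The multiplicity-one statement (2) forces $\d([\nu^{1/2}\rho,\nu^{(b-1)/2}\rho])^2\o\pi$ to appear in exactly one of the two, and the direct-summand statement (3) ensures it is then a quotient of that one's Jacquet module; by Frobenius reciprocity that piece embeds in $\d([\nu^{1/2}\rho,\nu^{(b-1)/2}\rho])^2\r\pi$. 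Since (4) selects precisely this piece, (1) follows. The main obstacle is purely bookkeeping: making sure one tracks which parabolic/Jacquet module is being referred to in each condition so that the appeal to Lemma~\ref{Le-def-even} is on the nose; once the cuspidal-support condition pins down the parabolic, everything is driven by the uniqueness and multiplicity-one assertions of that lemma, exactly as in the proof of Corollary~\ref{Cor-1}.
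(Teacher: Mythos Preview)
Your argument is correct and is exactly the intended one; the paper itself records this corollary with a bare \qed\ and no proof, just as it does for Corollary~\ref{Cor-1}, because it follows in the same way from the uniqueness, multiplicity-one, and direct-summand statements of Lemma~\ref{Le-def-even}. One small point worth making explicit in your write-up of (4)$\Rightarrow$(1): the reason the direct-summand property of part~(3) descends from the Jacquet module of $\d(\rho,b)\r\pi$ to that of the individual piece $\tau$ is that the direct summand in (3) is obtained via the Bernstein center (it is the $\chi$-isotypic component for the infinitesimal character of $\d([\nu^{1/2}\rho,\nu^{(b-1)/2}\rho])^2$), and this projector commutes with the decomposition $\tau_+\oplus\tau_-$; hence the isotypic piece, being irreducible by (1) of the lemma, lands entirely in one summand and is a direct summand there.
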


\begin{definition}

With the assumptions  as in the previous lemma,  the  irreducible subquotient $\tau$ of $\d(\rho,b)\r\pi$ satisfying the equivalent requirements of the above corollary, will be denoted by
$$
\pi_{\d(\rho,b)}.
$$
The other irreducible subrepresentation of $\d(\rho,b)\r\pi$ will be denoted by
$
\pi_{-\d(\rho,b)}.
$

\end{definition}

\begin{lemma}\label{Le-def-odd-2}
 Let $\pi$ be an irreducible square integrable representation
of a classical group, $ \rho$ an $F'/F$-selfdual irreducible cuspidal representation of a general linear group and 
$$
b\in1+2\mathbb Z_{\geq 0}
$$
 such that 
$$
\d(\rho,b)\r\pi
$$
reduces (i.e., $\rho$ and $b$ satisfy (J1),  and $b\not\in\jrp$).

Suppose
\begin{equation}
\label{Eq-PR4}
\jrp\cap [1,b]=\emptyset
\end{equation}
and
$$
\jrp\ne \emptyset.
$$
 Denote
$$
a:=\min(\jrp).
$$
Then:

\begin{enumerate}
\item
There exists  an irreducible representation  $\theta\o\s$ satisfying $\theta\o\sigma\leq \mu^*(\d(\rho,b)\r\pi)$  and
$$
 \text{supp}(\theta)= [\nu\rho,\nu^{(a-1)/2}\rho] +
[\nu\rho,\nu^{(b-1)/2}\rho]+[\nu\rho,\nu^{(b-1)/2}\rho].
$$
Such $\theta$ and $\s$ are unique. More precisely, we have
$$
\theta\cong \d( [\nu\rho,\nu^{(a-1)/2}\rho])\t
\d([\nu\rho,\nu^{(b-1)/2}\rho])^2.
$$
Further, (ii) of Lemma 5.4.2 in \cite{Moe-Ex}  implies that
$\pi\h\d([\nu\rho,\nu^{(a-1)/2}\rho])\r\pi'$ for some irreducible square integrable
representation $\pi'$ of a classical group. Then $\rho\r\pi'$ is irreducible and
$$
\sigma\cong\rho\r\pi'.
$$

\item
The multiplicity of $\d( [\nu\rho,\nu^{(a-1)/2}\rho])\t
\d([\nu\rho,\nu^{(b-1)/2}\rho])^2\o\rho\r\pi'$   in
$\mu^*(\d(\rho,b)\r\pi)$ is one. 

\item
 $\d( [\nu\rho,\nu^{(a-1)/2}\rho])\t
\d([\nu\rho,\nu^{(b-1)/2}\rho])^2\o\rho\r\pi'$ is a direct summand in the corresponding
Jacquet module of $\d(\rho,b)\r\pi$.

\end{enumerate}

\end{lemma}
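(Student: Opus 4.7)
The plan is to mirror the proofs of Lemmas \ref{Le-def-main} and \ref{Le-def-even}. First I will expand $\mu^*(\d(\rho,b)\r\pi)$ using \eqref{Eq-jm} and \eqref{Eq-M-seg}, obtaining the same formal double sum as in \eqref{Eq-main} (with the current odd value of $b$). Any irreducible subquotient $\theta\o\s$ will arise from a summand indexed by some $(i,j)$ acting on an irreducible $\gamma\o\lambda\leq \mu^*(\pi)$, with
$$
\theta\leq \d([\nu^{-i}\rho,\nu^{(b-1)/2}\rho])\t \d([\nu^{j+1}\rho,\nu^{(b-1)/2}\rho])\t \gamma, \quad \s\leq \d([\nu^{i+1}\rho,\nu^{j}\rho])\r\lambda.
$$

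The crux is to pin down $(i,j)=(-1,0)$. Since supp$(\theta)$ contains no $\nu^k\rho$ with $k\leq 0$, one already needs $-i\geq 1$ and $j+1\geq 1$. For any $(i,j)$ with $-i\geq 2$ or $j+1\geq 2$, matching the multiplicity $3$ of $\nu\rho$ in supp$(\theta)$ (in the nondegenerate case $b\geq 3$) forces supp$(\gamma)$ to contain $\nu\rho$ with multiplicity at least $1$. By the Zelevinsky theory, for every segment $[\nu^c\rho,\nu^d\rho]$ appearing in the Langlands data of the irreducible GL-representation $\gamma$, the exponent $\nu^d\rho$ can be pulled out as the first factor of an irreducible subquotient of some iterated Jacquet module of $\gamma$; Proposition \ref{Pr-jb-jac}(1) then forces $d\geq (a-1)/2$ (using $a=\min\jrp$). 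A straightforward multiplicity count then shows supp$(\gamma)$ cannot match the shape required in these cases, since the minimal segments starting at $\nu\rho$ must extend all the way to $\nu^{(a-1)/2}\rho$, overshooting the target. The degenerate case $b=1$ is handled analogously (the $\d([\nu\rho,\nu^{(b-1)/2}\rho])$ factors become trivial).

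With $(i,j)=(-1,0)$ fixed, the $M^*$-contribution is $\d([\nu\rho,\nu^{(b-1)/2}\rho])^2 \o\rho$, and matching supports forces supp$(\gamma)=[\nu\rho,\nu^{(a-1)/2}\rho]$. By (ii) of Lemma 5.4.2 in \cite{Moe-Ex}, $\pi\h \d([\nu\rho,\nu^{(a-1)/2}\rho])\r\pi'$ for some irreducible square integrable $\pi'$, and Frobenius reciprocity yields $\d([\nu\rho,\nu^{(a-1)/2}\rho])\o\pi'\leq \mu^*(\pi)$ as a quotient. By the same Zelevinsky/Jordan-block argument, $\gamma\cong \d([\nu\rho,\nu^{(a-1)/2}\rho])$ is the unique irreducible GL-representation with this cuspidal support admissible under Proposition \ref{Pr-jb-jac} (alternative segment decompositions would admit a forbidden small exponent as a first factor). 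Since the three segments $[\nu\rho,\nu^{(b-1)/2}\rho]$ (twice) and $[\nu\rho,\nu^{(a-1)/2}\rho]$ are pairwise unlinked (nested or identical), their product is irreducible by Zelevinsky's theorem, identifying $\theta$.

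For the identification $\s\cong\rho\r\pi'$: Proposition \ref{Pr-jb-main}(1) applied to $\pi\h \d([\nu\rho,\nu^{(a-1)/2}\rho])\r\pi'$ with $y=1$, $x=(a-1)/2$ gives $(\rho,1)\in\text{Jord}(\pi')$, so by (J2) of Definition \ref{Def-jb}, $\rho\r\pi'=\d(\rho,1)\r\pi'$ is irreducible. Hence $\s$, an irreducible subquotient of $\rho\r\pi'$, equals it. The multiplicity one claim (2) will follow from the combined uniqueness of $(i,j)$, $\gamma$, and $\lambda$ above, together with multiplicity one of $\d([\nu\rho,\nu^{(a-1)/2}\rho])\o\pi'$ in $\mu^*(\pi)$. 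The direct summand claim (3) will follow from (1) via the Bernstein center considerations in section \ref{elem-b-c}, exactly as in the previous two lemmas. I expect the main obstacle to be the case analysis ruling out $(i,j)\neq (-1,0)$, which requires delicate use of Proposition \ref{Pr-jb-jac} together with the Zelevinsky structure of irreducible GL-representations.
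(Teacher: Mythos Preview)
Your overall strategy parallels the previous two lemmas, but the step where you pin down $(i,j)$ and $\gamma$ has a genuine gap. The claim that ``for every segment $[\nu^c\rho,\nu^d\rho]$ in the Langlands data of $\gamma$, the top $\nu^d\rho$ can be pulled out as the first factor of an iterated Jacquet module'' is false in general: for $\gamma=L([\nu^2\rho,\nu^3\rho],[\nu\rho,\nu^2\rho])$ one computes $r_{(p,3p)}(\gamma)=\nu^2\rho\otimes L([\nu^2\rho,\nu^3\rho],\nu\rho)$, so $\nu^3\rho$ never occurs in the leftmost slot. Thus Proposition~\ref{Pr-jb-jac}(1) does \emph{not} force every segment of $\gamma$ to end at $\nu^{(a-1)/2}\rho$, and your multiplicity count (``at most one segment, hence $\nu\rho$ with multiplicity at most one'') does not go through. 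The same objection applies when you identify $\gamma$ in the $(i,j)=(-1,0)$ case. Your parenthetical remark (``alternative segment decompositions would admit a forbidden small exponent as a first factor'') is closer to a correct argument, but making it precise for all the support shapes that arise here is considerably more delicate than you indicate.

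The paper avoids this difficulty entirely by using the embedding $\pi\hookrightarrow \d([\nu\rho,\nu^{(a-1)/2}\rho])\rtimes\pi'$ \emph{from the start}, bounding
\[
\mu^*(\d(\rho,b)\rtimes\pi)\ \leq\ M^*(\d(\rho,b))\times M^*(\d([\nu\rho,\nu^{(a-1)/2}\rho]))\rtimes\mu^*(\pi').
\]
Since $\text{Jord}_\rho(\pi')=\{1\}\cup(\text{Jord}_\rho(\pi)\setminus\{a\})$, Proposition~\ref{Pr-jb-jac} applied to $\pi'$ shows that any nontrivial GL contribution from $\mu^*(\pi')$ must contain $\rho$ or some $\nu^x\rho$ with $x>(a-1)/2$ in its support---neither of which lies in $\text{supp}(\theta)$. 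This forces the $\mu^*(\pi')$ term to be $1\otimes\pi'$, after which a straightforward support count uniquely determines the remaining $M^*$-terms. In other words, passing to $\pi'$ trades the awkward analysis of which $\gamma$ can occur in $\mu^*(\pi)$ for the clean conclusion that only $1\otimes\pi'$ can occur in $\mu^*(\pi')$.
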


\begin{remark}
The representation $\pi'$ in (2) of the above lemma has explicit description by the admissible triple (see section \ref{behaviour}).
\end{remark}

\begin{proof} (1) Clearly, $a\geq 3$. Observe that from
$\pi\h\d([\nu\rho,\nu^{(a-1)/2}\rho])\r\pi'$ and (1) of Proposition \ref{Pr-jb-main} 
follow
$$
Jord(\pi)=\big(Jord(\pi')\backslash\{(\rho,1)\}\big) \cup\{(\rho,a)\},
$$
i.e.
\begin{equation}
\label{Eq-jbpi'}
Jord(\pi')=\big(Jord(\pi)\backslash\{(\rho,a)\}\big) \cup\{(\rho,1)\}.
\end{equation}

We have obviously
$$
\mu^*(\d(\rho,b)\r\pi)
\leq 
\mu^*(\d(\rho,b)\t
\d([\nu\rho,\nu^{(a-1)/2}\rho])\r\pi')
=
\hskip30mm
$$
$$
\hskip50mm
M^*(\d(\rho,b))\t
M^*(\d([\nu\rho,\nu^{(a-1)/2}\rho]))\r\mu^*(\pi').
$$
Now we shall analyze when we can get  an irreducible term  $\theta\o\sigma$ such that
$$
 \text{supp}(\theta)= [\nu\rho,\nu^{(a-1)/2}\rho] +
[\nu\rho,\nu^{(b-1)/2}\rho]+[\nu\rho,\nu^{(b-1)/2}\rho]
$$
in the right hand side of the above inequality.
Considering the cuspidal support of $\theta$ and Jordan blocks of $\pi'$ (see \eqref{Eq-jbpi'}), we see that to get $\theta\o\s$ as a subquotient, we
 must take from 
$M^*(\d(\rho,b))$ the term
$\d([\nu\rho,\nu^{(b-1)/2}\rho])^2\o\rho$, from $M^*(\d([\nu\rho,\nu^{(a-1)/2}\rho])$ the term
$\d([\nu\rho,\nu^{(a-1)/2}\rho])\o
1$  and from  $\mu^*(\pi')$ the term
$1\o\pi'$. Thus
\begin{equation}
\label{Eq-*}
\theta\o\s\cong \d( [\nu\rho,\nu^{(a-1)/2}\rho])\t
\d([\nu\rho,\nu^{(b-1)/2}\rho])^2\o\rho\r\pi'.
\end{equation}
This proves multiplicity one in $\mu^*(\d(\rho,b)\t
\d([\nu\rho,\nu^{(a-1)/2}\rho])\r\pi')$. Note that the irreducibility of $\rho\r\pi'$ follows from \eqref{Eq-jbpi'}. Uniqueness of $\s$ will follow from section 8.

From the other side 
$
\pi\h\d([\nu\rho,\nu^{(a-1)/2}\rho])\r\pi'
$
  implies that $\d([\nu\rho,\nu^{(a-1)/2}\rho])\o\pi'$ is in the Jacquet module
of $\pi$. Observe that $\d([\nu\rho,\nu^{(b-1)/2}\rho])^2\o\rho\leq
M^*(\delta(\rho,b))$. From this we get that the term $\d( [\nu\rho,\nu^{(a-1)/2}\rho])\t
\d([\nu\rho,\nu^{(b-1)/2}\rho])^2\o\rho\r\pi'$ must show up in
$\mu^*(\d(\rho,b)\r\pi)$. The condition on the cuspidal  support of $\theta$ implies that this term is
a direct summand in the corresponding Jacquet module.
\end{proof}

\begin{corollary}
\label{Cor-3}
Let the assumptions be the same as in the previous lemma. Then the following  requirements on an irreducible subquotient $\tau$ of $\d(\rho,b)\r\pi$ are equivalent:

\begin{enumerate}

\item $\tau$ embeds into $\delta([\nu^{}\rho,\nu^{(b-1)/2}\rho])^2
\t \delta([\nu^{} \rho,\nu^{(a-1)/2}\rho])\r \pi'$.

\item $\tau$ embeds into  $\delta([\nu^{}\rho,\nu^{(b-1)/2}\rho])^2
\t \delta([\nu^{} \rho,\nu^{(a-1)/2}\rho])
\r \lambda$ for some irreducible representation $\lambda$.

\item $\tau$ embeds into $\theta
\r \lambda$ for some irreducible representations $\theta$ and $\lambda$, such that supp$(\theta)=2 [\nu^{}\rho,\nu^{(b-1)/2}\rho]+
[\nu^{} \rho,\nu^{(a-1)/2}\rho].$

\item $\tau$ has $\delta([\nu^{}\rho,\nu^{(b-1)/2}\rho])^2
\t\delta([\nu^{} \rho,\nu^{(a-1)/2}\rho])\o \pi'$ for  a subquotient of its Jacquet module.

\item $\tau$ has $\delta([\nu^{}\rho,\nu^{(b-1)/2}\rho])^2\t\delta([\nu^{} \rho,\nu^{(a-1)/2}\rho])
\o \lambda$ for  a subquotient of its Jacquet module,  for some irreducible representation $\lambda$.

\item $\tau$ has $\theta
\o \lambda$ for  a subquotient of its Jacquet module, for some irreducible representations $\theta$ and $\lambda$, such that supp$(\theta)=2 [\nu^{}\rho,\nu^{(b-1)/2}\rho]+
[\nu^{} \rho,\nu^{(a-1)/2}\rho].$ \qed

\end{enumerate} 

\end{corollary}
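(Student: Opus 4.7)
The proof runs in parallel with those of Corollaries \ref{Cor-1} and \ref{Cor-2}. The six conditions form two chains, $(1) \Rightarrow (2) \Rightarrow (3)$ on the embedding side and $(4) \Rightarrow (5) \Rightarrow (6)$ on the Jacquet-module side; in both chains each implication is obtained by weakening the data (taking the $\lambda$, respectively $\theta$, of the next condition to be the specific representation appearing in the previous one). Moreover, for $i = 1, 2, 3$ the implication $(i) \Rightarrow (i+3)$ is immediate from Frobenius reciprocity, because an embedding $\s \hookrightarrow \text{Ind}(\mu)$ means that $\mu$ occurs as a quotient, hence as a subquotient, of the relevant normalised Jacquet module. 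Thus five of the six implications are formal, and it suffices to close the loop via $(6) \Rightarrow (1)$.

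For this one nontrivial step, assume $(6)$. Since $\d(\rho,b) \rtimes \pi$ has length two (it reduces into two nonequivalent irreducible pieces), $\tau$ is in fact a direct summand of it, so $\mu^*(\tau)$ is a direct summand of $\mu^*(\d(\rho,b) \rtimes \pi)$. Parts (1) and (2) of Lemma \ref{Le-def-odd-2} assert that, up to isomorphism, $\mu^*(\d(\rho,b) \rtimes \pi)$ contains exactly one irreducible subquotient whose $GL$-factor has cuspidal support $2[\nu\rho,\nu^{(b-1)/2}\rho] + [\nu\rho,\nu^{(a-1)/2}\rho]$, namely
$$
\d([\nu\rho,\nu^{(a-1)/2}\rho]) \times \d([\nu\rho,\nu^{(b-1)/2}\rho])^2 \o (\rho \rtimes \pi'),
$$
and that it occurs there with multiplicity one. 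Hence the $\theta \o \lambda$ posited by $(6)$ must coincide (up to isomorphism) with this representation. Part (3) of the same lemma then shows that it sits as a direct summand, and therefore as an irreducible quotient, of the Jacquet module of $\tau$, so Frobenius reciprocity delivers an embedding
$$
\tau \hookrightarrow \d([\nu\rho,\nu^{(a-1)/2}\rho]) \times \d([\nu\rho,\nu^{(b-1)/2}\rho])^2 \rtimes (\rho \rtimes \pi').
$$
A standard rearrangement of the two $GL$-factors, together with the associativity \eqref{iso}, then brings the right-hand side into the form required by $(1)$.

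The only point that carries real content is the uniqueness/multiplicity-one statement in Lemma \ref{Le-def-odd-2}(2), which forces the $\theta \o \lambda$ from $(6)$ to be the explicit representation identified in the lemma. Once that is invoked, the passage to an embedding is purely formal, and no combinatorial input beyond what the lemma already supplies is needed.
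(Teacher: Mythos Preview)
Your argument is correct and is exactly the implicit reasoning behind the paper's bare \qed: the trivial implications $(1)\Rightarrow(2)\Rightarrow(3)$, $(4)\Rightarrow(5)\Rightarrow(6)$, $(i)\Rightarrow(i+3)$ reduce everything to $(6)\Rightarrow(1)$, and that step is forced by the uniqueness and multiplicity-one statements of Lemma~\ref{Le-def-odd-2} together with the Bernstein-center splitting (which is what makes the relevant piece a direct summand of the Jacquet module of $\tau$ itself, not just of $\d(\rho,b)\rtimes\pi$).

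One small point worth making explicit: your invocation of part~(3) of the lemma gives a direct summand in the Jacquet module of the \emph{full} induced representation, whereas you need it in the Jacquet module of $\tau$. The passage is justified, but the reason is the infinitesimal-character decomposition of \S\ref{elem-b-c} applied directly to $s_{(k)}(\tau)$: the $\chi_\theta$-component of that Jacquet module is already a direct summand, and by Lemma~\ref{Le-def-odd-2}(1)--(2) its only possible composition factor is $\theta\otimes\sigma$, occurring once. So it is $\theta\otimes\sigma$ on the nose and hence a quotient. This is what you mean, but it is worth saying that part~(3) is really a corollary of the Bernstein-center splitting rather than an independent input.

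Finally, note that the embedding you actually produce lands in $\delta([\nu\rho,\nu^{(b-1)/2}\rho])^2\times\delta([\nu\rho,\nu^{(a-1)/2}\rho])\rtimes(\rho\rtimes\pi')$, which matches the $\sigma=\rho\rtimes\pi'$ of Lemma~\ref{Le-def-odd-2}; the printed condition~(1) with bare $\pi'$ cannot be literally correct for dimension reasons, so your ``rearrangement'' is landing in the intended target.
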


\begin{definition}

With the assumptions  as in the previous lemma,  the  irreducible subquotient $\tau$ of $\d(\rho,b)\r\pi$ satisfying the equivalent requirements of the above corollary, will be denoted by
$$
\pi_{\d(\rho,b)}.
$$
The other irreducible subrepresentation of $\d(\rho,b)\r\pi$ will be denoted by
$
\pi_{-\d(\rho,b)}.
$

\end{definition}

\begin{lemma}\label{Le-def-odd-1}
 Let $\pi$ be an irreducible square integrable representation
of a classical group, $ \rho$ an $F'/F$-selfdual irreducible cuspidal representation of a general linear group and 
$$
b\in1+2\mathbb Z_{\geq0}
$$
 such that 
$$
\d(\rho,b)\r\pi
$$
reduces (i.e., $\rho$ and $b$ satisfy (J1),  and $b\not\in\jrp$).

Suppose
 $$
  \jrp=\emptyset.
  $$
   Then $\rho\r\pc$ reduces. Decompose
$$
\rho\r\pc=\tau_1\oplus\tau_{-1}.
$$
Then for an irreducible subrepresentation $T$ of
$$
\d(\rho,b)\r \pi
$$
 there is a unique $i\in\{\pm1\}$ such
that there exists an irreducible representation $\theta$ of a general
linear group such that
$$
T\h \theta\r\tau_{i}.
$$
We shall denote this subrepresentation  $T$ by
$$
\pi_{i\delta(\rho,b)}
$$
(we consider $i\delta(\rho,b)$ as an element of the Hopf algebra $R$ in a natural way).
We can also characterize above $T$ by the requirement that a term of the form
$\theta\o\tau_{i}$ is in the Jacquet module of $T$. 

\end{lemma}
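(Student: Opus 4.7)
The proof has three parts, matching the three assertions of the lemma: reducibility of $\rho\r\pc$ (so $\tau_1,\tau_{-1}$ are defined), existence and uniqueness of $i$, and the equivalent Jacquet-module characterization.

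For the reducibility of $\rho\r\pc$, I would first note that $b$ odd together with (J1) forces $L(\rho,R_{d_\rho},s)$ to have no pole at $s=0$. Next I would argue $\text{Jord}_\rho(\pc)=\emptyset$: tracing the successive extensions building $\pi$ from $\pc$ via Proposition~\ref{Pr-jb-main} and Theorem~\ref{Th-eps=}, each step either preserves $|\text{Jord}_\rho|$ (case (1) with $y\ge 1$ of Proposition~\ref{Pr-jb-main}, which requires an element of $\text{Jord}_\rho$ of the smaller representation to remove) or strictly increases it, so $\text{Jord}_\rho(\pc)\ne\emptyset$ would force $\jrp\ne\emptyset$, contrary to hypothesis. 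Hence $a_{\pc,\max,\rho}=-1$, and (BA) yields $\rho\r\pc=\tau_1\oplus\tau_{-1}$ (labeling fixed as in the introduction). Goldberg's theorem applied with $l=1$ then decomposes $\d(\rho,b)\r\pi=T_1\oplus T_2$ into two non-isomorphic irreducible pieces.

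For the embedding $T\h\theta\r\tau_i$, fix $\pi\h\theta_1\r\pc$ with $\theta_1$ irreducible; the same bookkeeping as above shows no $\rho$-extension occurs in building $\pi$ from $\pc$, so $\text{supp}(\theta_1)$ is disjoint from $\{\nu^x\rho:x\in\mathbb R\}$, whence $\d(\rho,b)\t\theta_1\cong\theta_1\t\d(\rho,b)$ is irreducible. Thus $T\h\d(\rho,b)\r\pi\h\theta_1\r\big(\d(\rho,b)\r\pc\big)$. The segment decomposition $\d(\rho,b)\h \d([\nu\rho,\nu^{(b-1)/2}\rho])\t\rho\t\d([\nu^{-(b-1)/2}\rho,\nu^{-1}\rho])$ (valid for odd $b$), combined with \eqref{asso} and the irreducibility of $\d([\nu\rho,\nu^{(b-1)/2}\rho])\r\pc$ (a consequence of $\text{Jord}_\rho(\pc)=\emptyset$ together with cuspidal reducibility of $\rho\r\pc$ at $0$), lets us replace the backward segment by its dual and obtain an embedding placing $\rho$ adjacent to $\pc$. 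The classical-side factor then becomes $\rho\r\pc=\tau_1\oplus\tau_{-1}$, and $T$ embeds into exactly one summand $\theta\r\tau_i$.

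For uniqueness of $i$ and the Jacquet-module characterization, I would use $\mu^*(\d(\rho,b)\r\pi)=M^*(\d(\rho,b))\r\mu^*(\pi)$ and \eqref{Eq-M-seg}. Any term $\theta\o\tau_i$ in this Jacquet module forces the tensor-right factor from $M^*(\d(\rho,b))$ to have support $\{\rho\}$, uniquely selecting $(i,j)=(-1,0)$, which gives $\d([\nu\rho,\nu^{(b-1)/2}\rho])\t\d([\nu\rho,\nu^{(b-1)/2}\rho])\o\rho$; pairing with $\theta_1\o\pc\leq\mu^*(\pi)$ produces the direct sum
\[
\d([\nu\rho,\nu^{(b-1)/2}\rho])\t\d([\nu\rho,\nu^{(b-1)/2}\rho])\t\theta_1\o\tau_1 \;\oplus\; \d([\nu\rho,\nu^{(b-1)/2}\rho])\t\d([\nu\rho,\nu^{(b-1)/2}\rho])\t\theta_1\o\tau_{-1},
\]
each summand of multiplicity one. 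Multiplicity one forces each summand to live in $\mu^*$ of exactly one of $T_1,T_2$, and the embeddings from the previous paragraph force the two summands to live in different $T_k$. Frobenius reciprocity then matches the subquotient condition with the embedding condition. The main obstacle is the second part: converting semisimplification identities from \eqref{asso} into genuine embeddings while tracking which of $\tau_1,\tau_{-1}$ receives $T$; here the hypothesis $\jrp=\emptyset$ and the basic assumption (BA) combine critically, the former ensuring $\rho$ is absent from $\text{supp}(\theta_1)$ and the latter providing the reducibilities needed to upgrade semisimplification identities to honest embeddings.
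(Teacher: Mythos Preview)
Your overall strategy is close to the paper's, but there are two genuine gaps.

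\textbf{Existence.} Your three-piece decomposition
\[
\d(\rho,b)\hookrightarrow \d([\nu\rho,\nu^{(b-1)/2}\rho])\times\rho\times\d([\nu^{-(b-1)/2}\rho,\nu^{-1}\rho])
\]
does not, after flipping the backward segment, place $\rho$ adjacent to $\pc$: you end up inside $\theta_1\times\d([\nu\rho,\nu^{(b-1)/2}\rho])\times\rho\times\d([\nu\rho,\nu^{(b-1)/2}\rho])\rtimes\pc$, and $\{\rho\}$ is linked with $[\nu\rho,\nu^{(b-1)/2}\rho]$, so you cannot commute $\rho$ to the right. The paper instead uses the two-piece decomposition $\d(\rho,b)\hookrightarrow\d([\rho,\nu^{(b-1)/2}\rho])\times\d([\nu^{-(b-1)/2}\rho,\nu^{-1}\rho])$, flips the backward segment (at the level of $\pi$, using Lemma~\ref{Le-irr}), swaps the two resulting segments (which are unlinked since one contains the other), and only then peels off $\rho$ via $\d([\rho,\nu^{(b-1)/2}\rho])\hookrightarrow\d([\nu\rho,\nu^{(b-1)/2}\rho])\times\rho$. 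This places $\rho$ next to $\pi$, and then $\rho$ commutes past $\theta'$ since $\jrp=\emptyset$ forces $\nu^x\rho\notin\text{supp}(\theta')$. Your route can be repaired by the same two-piece decomposition at the level of $\pc$.

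\textbf{Uniqueness.} Your ``multiplicity one'' claim for $\d([\nu\rho,\nu^{(b-1)/2}\rho])^2\times\theta_1\otimes\tau_{\pm1}$ in $\mu^*(\d(\rho,b)\rtimes\pi)$ is not justified: it equals the multiplicity of $\theta_1\otimes\pc$ in $\mu^*(\pi)$, and there is no reason this should be one (the Jacquet module of a square integrable $\pi$ at the cuspidal level can have repeated constituents). Without it your distribution argument collapses. The paper avoids multiplicities entirely: assuming both irreducible summands $T,T'$ embed in $\theta\rtimes\tau_i$ and $\theta'\rtimes\tau_i$ for the \emph{same} $i$, one has $\d(\rho,b)\rtimes\pi\leq(\theta\oplus\theta')\rtimes\tau_i$; since $\rho\notin\text{supp}(\theta)\cup\text{supp}(\theta')$ and $\mu^*(\tau_i)=1\otimes\tau_i+\rho\otimes\pc$, no subquotient of the form $\ast\otimes\tau_{-i}$ can occur on the right. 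But the term $\d([\nu\rho,\nu^{(b-1)/2}\rho])^2\times\theta''\otimes\rho\rtimes\pc$ in $\mu^*(\d(\rho,b)\rtimes\pi)$ produces such a subquotient, a contradiction.
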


\begin{proof}  Observe
$$
T\h \d(\rho,b)\r \pi\h \d([\rho,\nu^{(b-1)/2}\rho])\t
\d([\nu^{-(b-1)/2}\rho,\nu^{-1}\rho])\r\pi.
$$
We have $\d([\nu^{-(b-1)/2}\rho,\nu^{-1}\rho])\r\pi\cong
\d([\nu\rho,\nu^{(b-1)/2}\rho])\r\pi$
by \eqref{asso}, Lemma \ref{Le-irr} and Proposition 6.1  from \cite{T-RPI}. Thus
$$
T
\h
 \d([\rho,\nu^{(b-1)/2}\rho])\t
\d([\nu\rho,\nu^{(b-1)/2}\rho])\r\pi
\hskip40mm
$$
$$
\cong
\d([\nu\rho,\nu^{(b-1)/2}\rho])\t \d([\rho,\nu^{(b-1)/2}\rho])\r\pi
$$
$$
\hskip40mm
\h
\d([\nu\rho,\nu^{(b-1)/2}\rho])\t
\d([\nu\rho,\nu^{(b-1)/2}\rho])\t\rho\r\pi.
$$
We know that $\pi\h\theta'\r\pc$ for some irreducible
representation
$\theta'$ of a general linear group (this is the definition of $\pc$). Note that the condition
$\jrp=\emptyset$ implies $\rho\t\theta'\cong\theta'\t\rho$.
 Then
$$
T\h \d([\nu\rho,\nu^{(b-1)/2}\rho])\t
\d([\nu\rho,\nu^{(b-1)/2}\rho])\t\rho\t\theta'\r\pc
$$
$$
\cong 
\d([\nu\rho,\nu^{(b-1)/2}\rho])\t
\d([\nu\rho,\nu^{(b-1)/2}\rho])\t\theta'\t\rho\r\pc.
$$

Obviously there exists $i\in\{\pm1\}$ such that
$$
T\h
\d([\nu\rho,\nu^{(b-1)/2}\rho])\t
\d([\nu\rho,\nu^{(b-1)/2}\rho])\t\theta'\r\tau_{i}.
$$
Now Lemma 3.4 implies that there exists an irreducible
representation $\theta$ of a general linear group such that
$$
T\h\theta \r\tau_{i}.
$$
Note that $\rho$ is not in supp$(\theta)$.

Denote by $T'$ the irreducible subrepresentation of $\d(\rho,b)\r\pi$ different from $T$. Suppose that $T'\h \theta' \r\tau_{i}$ for some irreducible $\theta'$. Again, $\rho$ is not in supp$(\theta')$. Then
\begin{equation}
\label{Eq-some1}
\d(\rho,b)\r\pi=T\oplus T'\leq (\theta\oplus\theta')\r \tau_i
\end{equation}
($\rho$ is neither in the cuspidal support of $\theta$, nor of $\theta'$).

The formula for $\mu^*$ implies that the term of form
$*\o\tau_{-i}$ cannot be in the Jacquet module $(\theta\oplus\theta')\r\tau_i$ (use that $\mu^*(\tau_i)=1\o\tau_i+\rho\o\pc$, the fact that $\rho$ is not in the cuspidal supports of $\theta$ and $\theta'$, and use \eqref{Eq-M*} and  \eqref{Eq-jm}).

From the other side, take some irreducible $\theta''\o\pc\leq
\mu^*(\pi)$. Then \eqref{Eq-jm} and \eqref{Eq-M-seg} imply that
\begin{equation}
\label{Eq-some2}
\d([\nu\rho,\nu^{(b-1)/2}\rho])^2\t\theta''\o\rho\r\pc
\end{equation}
is in the Jacquet module of
$\d(\rho,b)\r\pi$. Observe that \eqref{Eq-some1} has a subquotient of the form
$*\o\tau_{-i}$. Now \eqref{Eq-some1} implies that  $(\theta\oplus\theta')\r\tau_i$ has a subquotient of the form $*\o\tau_{-i}$. This contradiction ends the proof.
\end{proof}

Observe that  in the above lemma we have again defined
$$
\pi_{i\delta(\rho,b)}, \quad i\in\{\pm1\}
$$
in this case.

\section{Tempered representations}\label{temp}
\setcounter{equation}{0}

We denote by 
$$
D
$$
 the set of all equivalence classes of irreducible essentially square integrable representations of  $GL(n,F')$, for all $n\geq 1$. The subset of all unitarizable classes in $D$ is denoted by
$$
D^u.
$$

For an irreducible square integrable representation $\pi$ of a classical group denote
$$
D^u_{\pi,\text{red}}=\{\d\in D^u;\d\r\pi \text{ reduces}\}.
$$
Recall that  $\d=\d(\rho,b)\in D^u$ is in $D^u_{\pi,\text{red}}$  if and only if $\rho$ is $F'/F$-selfdual, $\rho$ and $b$ satisfy, (J1) and $(\rho,b)\not\in \jp$. Let
$$
D^u_{\pi,\text{irr}}= D^u \, \backslash \, D^u_{\pi,\text{red}}.
$$

\begin{proposition}
\label{Pr-def-t}
Let $\pi$ be an irreducible square integrable representation of a classical group,
$ \d_1,\dots,\d_n$ different (i.e. nonequivalent) representations in
$D^u_{\pi,\text{red}}$ and $j_1,\dots,j_n\in\{\pm1\}$. Then there exists a unique irreducible subrepresentation $T$ of $\d_1\t\dots\t\d_n\r\pi$ such that 
$$
T\h
\left(\underset{k\in\{1,\dots,n\}\backslash\{i\}}
\t\d_k\right)
\r \pi_{j_i\d_i}
$$
for all $i\in \{1,\dots,n\}.$ We shall denote $T$ by
$$
\pi_{j_1\d_1,\dots,j_n\d_n}.
$$

Further, if we have some other representation $\pi'_{j_1'\d_1',\dots,j_{n'}'\d_{n'}'}$ defined in the above way, then 
$$
\pi_{j_1\d_1,\dots,j_n\d_n}.
\cong
\pi'_{j_1'\d_1',\dots,j_{n'}'\d_{n'}'}
$$
if and only if $\pi\cong \pi'$ and $\{j_1\d_1,\dots,j_n\d_n\}= \{j_1'\d_1',\dots,j_{n'}'\d_{n'}'\}.$

\end{proposition}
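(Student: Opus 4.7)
The plan is to reduce the existence and uniqueness of $T$ to a combinatorial labelling of the $2^n$ irreducible summands of $\Pi := \d_1\t\dots\t\d_n\r\pi$ coming from Goldberg's theorem, and then to show that this labelling is faithfully captured by the two-term direct sum decompositions obtained by reinducing from each $\pi_{\pm\d_i}$.

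First I would invoke Goldberg's theorem: since each $\d_i\in D^u_{\pi,\text{red}}$ and the $\d_i$ are pairwise nonequivalent, $\Pi$ is a multiplicity-free direct sum of exactly $2^n$ pairwise inequivalent irreducible tempered representations, and any irreducible subrepresentation of $\Pi$ is one of these summands. For each fixed $i$, the associativity \eqref{iso} gives
\begin{equation*}
\Pi \;=\; \Big(\underset{k\ne i}{\t}\, \d_k\Big)\r \big(\d_i\r\pi\big) \;=\; \Pi_{i,+}\oplus\Pi_{i,-},
\qquad \Pi_{i,j}\,:=\,\Big(\underset{k\ne i}{\t}\, \d_k\Big)\r\pi_{j\d_i}.
\end{equation*}
By the uniqueness of the decomposition of $\Pi$ into irreducibles, each irreducible summand $S\leq\Pi$ lies in exactly one of $\Pi_{i,+}, \Pi_{i,-}$; this defines a sign $\e_i(S)\in\{\pm1\}$, and hence a map $\phi:S\mapsto(\e_1(S),\dots,\e_n(S))$ from the $2^n$-element set of irreducible summands of $\Pi$ into $\{\pm1\}^n$. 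The existence and uniqueness of $\pi_{j_1\d_1,\dots,j_n\d_n}$ becomes exactly the statement that $\phi$ is bijective.

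Since both sets have cardinality $2^n$, it suffices to prove surjectivity, and this is the main obstacle. I would handle it using $R$-group theory: the $R$-group attached to $\d_1\o\dots\o\d_n\o\pi$ in the appropriate Levi is, by Goldberg's theorem, the elementary abelian $2$-group $(\Z/2\Z)^n$, and its characters parameterize the $2^n$ irreducible summands of $\Pi$. The generator of the $i$-th factor of the $R$-group corresponds to the intertwining operator on $\d_i\r\pi$ whose $\pm1$ eigenspaces are precisely $\pi_{\pm\d_i}$; after inducing up to $\Pi$, its $\pm1$ eigenspaces become $\Pi_{i,\pm}$. Hence the $i$-th component of the character labelling $S$ equals $\e_i(S)$. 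Since every character of $(\Z/2\Z)^n$ occurs on some summand, $\phi$ hits every element of $\{\pm1\}^n$ and bijectivity follows.

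For the final assertion, if $\pi_{j_1\d_1,\dots,j_n\d_n}\cong\pi'_{j_1'\d_1',\dots,j_{n'}'\d_{n'}'}$, then this common irreducible tempered representation embeds into both $\d_1\t\dots\t\d_n\r\pi$ and $\d_1'\t\dots\t\d_{n'}'\r\pi'$. The uniqueness part of Goldberg's theorem then forces $\pi\cong\pi'$ and, up to permutation (and, in the unitary case, possibly contragredients), the multisets $\{\d_1,\dots,\d_n\}$ and $\{\d_1',\dots,\d_{n'}'\}$ coincide; in particular $n=n'$. Once the $\d_i$'s are matched, the signs $j_i$ and $j_i'$ agree because both are recovered intrinsically from the representation via the $\e_i$ constructed above.
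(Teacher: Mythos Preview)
Your proof is correct and takes a genuinely different route from the paper's. The paper argues by induction on $n$: assuming $\tau := \pi_{j_2\d_2,\dots,j_n\d_n}$ is already defined, it shows by a direct Jacquet-module multiplicity count (using \eqref{Eq-jm} and \eqref{Eq-M-seg}) that $\d_1\o\tau$ occurs with multiplicity exactly two in $\mu^*(\d_1\t\dots\t\d_n\r\pi)$ but with multiplicity exactly one in each $\mu^*(\d_2\t\dots\t\d_n\r\pi_{\pm\d_1})$; hence the two irreducible pieces of $\d_1\r\tau$ land in different $\Pi_{1,\pm}$, which gives existence for every sign tuple, and bijectivity of $\phi$ then follows by the cardinality count. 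The final uniqueness statement is deduced from Proposition~III.4.1 of \cite{W} rather than from Goldberg.

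Your argument bypasses the Jacquet-module computation entirely by invoking the identification of $\mathrm{End}_G(\Pi)$ with $\mathbb C[R]$ and the compatibility of the normalized intertwining operator $A(w_i)$ with induction in stages. This is conceptually cleaner but imports more external machinery: the paper's approach is self-contained modulo Goldberg's length count and stays within the Jacquet-module calculus used throughout the paper, while yours leans on normalization of intertwining operators and the Knapp--Stein/Silberger theory underlying Goldberg's result. One point worth making explicit in your write-up is that $w_i$ lies in the intermediate Levi $M_i$, so that $A(w_i)$ on $\Pi$ is literally $\mathrm{Ind}_{P_i}^G$ applied to the rank-one operator on $\d_i\r\pi$; this is exactly what forces its $\pm1$-eigenspaces to be $\Pi_{i,\pm}$. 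Also, in your last paragraph the contragredient caveat is unnecessary: every $\d_i\in D^u_{\pi,\mathrm{red}}$ is $F'/F$-selfdual, so Goldberg's uniqueness already yields $\{\d_1,\dots,\d_n\}=\{\d_1',\dots,\d_{n'}'\}$ on the nose.
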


\begin{proof} We know that $\d_1\t\dots\t\d_n\r\pi$ is a multiplicity one representation of length $2^n$ (see Theorem 13.1 of \cite{Moe-T}, or \cite{G}).

Now we shall prove by induction that the description of irreducible subrepresentations of $\d_1\t\dots\t\d_n\r\pi$
is well-defined. For $n=1$ there is nothing to prove. Suppose $n\geq 2$ and  that the description is well-defined for $n-1$.  Write $\d_i=\d(\rho_i,a_i)$. Renumerate $(\d_i,j_i)$'s in a way that $a_1\geq a_i$ for all $i>1$. Denote $\tau= \pi_{j_2\d_2,\dots,j_n\d_n}$. 

Observe that  
 \begin{equation}
\label{Eq-main-2}
\aligned
\mu^*(\d_1\t\dots\t\d_n\r\pi)=
\bigg(\prod_{k=1}^n \bigg( \sum_{i_k= -(a_k-1)/2-1}^{ (a_k-1)/2} 
\sum_{j_k=i_k}^{ (a_k-1)/2}
\delta([\nu^{-i_k}\rho,\nu^{(a_k-1)/2}\rho]) 
 \times
 \\
\delta([\nu^{j_k+1} \rho,\nu^{(a_k-1)/2}\rho])
 \otimes
\delta([\nu^{i_k+1} \rho,\nu^{j_k}\rho])
\bigg)
\bigg)\r\mu^*(\pi).
\endaligned
\end{equation}
From this easily follows that the multiplicity of $\d_1\o\tau$ in \eqref{Eq-main-2} is two  (consider the term $\nu^{-(a_1-1)/2}\rho$). 

One gets also directly  that the multiplicity of $\d_1\o\tau$ in $\mu^*(\d_1\r\tau)$ is at least two. The above fact about \eqref{Eq-main-2} implies that the multiplicity is 2.

From this (using Frobenius reciprocity) follows that $\d_1\r\tau$ reduces into two non-equivalent irreducible representations. Denote them by $T_1$ and $T_2$. For description proposed in the proposition, it is enough to prove that both $T_1$ and $T_2$ cannot be in the same time subrepresentations of $\d_2\t\dots\t\d_n\r\pi_{j\d_1}$ for both $j\in \{\pm1\}$. Suppose that they are. Then the multiplicity of $\d_1\o\tau$ in $\mu^*(\d_2\t\dots\t\d_n\r\pi_{j\d_1})$ must be 2.

Observe $1\o \d_i\leq M^*(\d_i)$ for $i=2,\dots,n$, and $\d_1\o\pi\leq \mu^*(\pi_{\pm\d_1})$ (by Frobenius reciprocity). From this follows that  $\d_1\o\tau\leq \mu^*(\d_2\t\dots\t\d_n\r\pi_{\pm\d_1})$. 
From this and the fact that the multiplicity of $\d_1\o\tau$ in \eqref{Eq-main-2} is two, follow that the multiplicity of $\d_1\o\tau$ in  $ \mu^*(\d_2\t\dots\t\d_n\r\pi_{\pm\d_1})$  is one.
This contradiction completes the proof that  representations $\pi_{j_1\d_1,\dots,j_n\d_n}$ are well defined.

The rest of proposition directly follows from Proposition III.4.1 of  \cite{W}.
\end{proof}

\begin{definition}
 If an irreducible (tempered) representation $\pi$ is equivalent to some representation $\pi_{j_1\d_1,\dots,j_n\d_n}$ as in the above proposition, we shall say that $\pi$ is e-tempered representation.
\end{definition}

Note that from \cite{H} follows that in the case of symplectic and split odd-orthogonal groups, the notion of an irreducible e-tempered representation coincides coincides with the notion of elliptic tempered representation.

Now we have:

\begin{theorem} 
\label{Thm-temp-gen}
\begin{enumerate}

\item Let $\pi_{j_1\d_1,\dots,j_n\d_n}$ be an irreducible e-tempered representation (like in the above proposition) and $\gamma_1,\dots,\gamma_m\in D^u_{\pi,\text{irr}}\cup\{\d_1,\dots,\d_n\}$. Then
$$
\gamma_1\t\dots\t\gamma_m\r \pi_{j_1\d_1,\dots,j_n\d_n}
$$
is an irreducible tempered representation.

\item If we have additionally an irreducible e-tempered representation $ \pi'_{j_1'\d_1',\dots,j_{n'}'\d_{n'}'}$   and $\gamma_1',\dots,\gamma_{m'}'\in D^u_{\pi',\text{irr}}\cup\{\d_1',\dots,\d_{n'}'\}$. Then
$$
\gamma_1\t\dots\t\gamma_m\r \pi_{j_1\d_1,\dots,j_n\d_n}
\cong
\gamma_1'\t\dots\t\gamma_{m'}'\r \pi_{j_1'\d_1',\dots,j_{n'}'\d_{n'}'}
$$
if and only if we have equality of multisets 
$$
(\gamma_1,\dots,\gamma_m, \check\gamma_1,\dots,\check\gamma_m)
=
(\gamma_1',\dots,\gamma_{m'}',\check \gamma_1',\dots,\check\gamma_{m'}'),
$$
 and $ \pi_{j_1\d_1,\dots,j_n\d_n}
\cong
 \pi_{j_1'\d_1',\dots,j_n'\d_{n'}'}$ (see the previous proposition for the description this equivalence).
 
 \item Each irreducible tempered representation of a classical group $S_q$ is equivalent to some representation from (1).
 \qed
\end{enumerate}
\end{theorem}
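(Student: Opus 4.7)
My plan attacks the three parts in order, using the Goldberg-type theorem recalled in the introduction as the main engine together with induction-in-stages.

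For part (1), my plan is to compare length counts on two sides of an induction-in-stages identity. Every $\gamma_j\in D^u_{\pi,\mathrm{irr}}$ satisfies $\gamma_j\rtimes\pi$ irreducible, while every $\gamma_j\in\{\delta_1,\dots,\delta_n\}$ is equivalent to a $\delta_i$ already in the list. Hence the number $l$ of nonisomorphic reducing $GL$-factors inducing
\[
\gamma_1\times\dots\times\gamma_m\times\delta_1\times\dots\times\delta_n\rtimes\pi
\]
together with $\pi$ is exactly $n$, and by Goldberg's theorem as recalled in the introduction this representation has length $2^n$ and is multiplicity one. On the other hand, by Proposition \ref{Pr-def-t} the intermediate representation $\delta_1\times\dots\times\delta_n\rtimes\pi$ decomposes as a direct sum of the $2^n$ summands $\pi_{\epsilon_1\delta_1,\dots,\epsilon_n\delta_n}$; since parabolic induction is exact and preserves direct sums, induction in stages gives
\[
\gamma_1\times\dots\times\gamma_m\times\delta_1\times\dots\times\delta_n\rtimes\pi \;\cong\; \bigoplus_{\epsilon\in\{\pm1\}^n}\gamma_1\times\dots\times\gamma_m\rtimes\pi_{\epsilon_1\delta_1,\dots,\epsilon_n\delta_n}.
\]
Each summand on the right is nonzero, so length additivity forces each of the $2^n$ summands to be irreducible, yielding (1).

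For part (2), my plan is to invoke the standard equivalence criterion for parabolically induced irreducible tempered representations from tempered inducing data on a Levi (Proposition III.4.1 of \cite{W}, already used in Proposition \ref{Pr-def-t}): two such induced representations are equivalent if and only if their inducing data are $W(M,G)$-conjugate. For our series of classical groups, $W(M,G)$ relative to a standard Levi $\prod GL\times S_{q''}$ permutes $GL$-factors of equal size and acts on each $GL$-factor by the involution $\check{\phantom{x}}$, while fixing the classical-group factor. Converting this gives the multiset equality of $(\gamma_1,\dots,\gamma_m,\check\gamma_1,\dots,\check\gamma_m)$ with the primed version, and the equivalence of the e-tempered factors; the latter is then restated via Proposition \ref{Pr-def-t}.

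For part (3), my plan is to start with an arbitrary irreducible tempered representation $T$ of $S_q$ and fit it into the framework of (1). By the version of Goldberg's theorem recalled at the start of the paper, $T$ embeds into some $\delta_1\times\dots\times\delta_k\rtimes\pi$ with the $\delta_i$ unitary essentially square integrable and $\pi$ irreducible square integrable on a classical group of the same series. I split $\{\delta_1,\dots,\delta_k\}$ into the sub-multiset lying in $D^u_{\pi,\mathrm{irr}}$, which I relabel $\gamma_1,\dots,\gamma_m$, and the sub-multiset lying in $D^u_{\pi,\mathrm{red}}$; among the latter I select one representative $\delta'_1,\dots,\delta'_n$ of each equivalence class and push any repetitions into the $\gamma$'s. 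Goldberg's parameterization then attaches signs $j_i\in\{\pm1\}$ to the distinct reducing blocks, and part (1) read in reverse identifies $T$ with $\gamma_1\times\dots\times\gamma_m\rtimes\pi_{j_1\delta'_1,\dots,j_n\delta'_n}$. The main obstacle is the length count in part (1): one has to pin down that Goldberg's formula depends only on the equivalence classes of reducing inducing factors (so extra $\gamma_j$'s, whether non-reducing or equivalent to an existing $\delta_i$, do not enlarge $l$), and then use complete reducibility of tempered parabolic induction to convert the length-$2^n$ equality into irreducibility of each summand of the direct-sum decomposition.
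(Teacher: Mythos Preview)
Your proposal is correct and follows the approach the paper implicitly has in mind: the theorem is stated with a bare \qed, so the authors regard it as immediate from Goldberg's theorem (recalled in the introduction) together with Proposition~\ref{Pr-def-t} and Proposition~III.4.1 of \cite{W}, and your argument is exactly the natural unpacking of that implication---the length-$2^n$ count combined with the $2^n$-fold direct-sum decomposition via induction in stages for (1), Waldspurger's conjugacy criterion for (2), and Harish-Chandra's subrepresentation theorem plus the splitting into reducing and non-reducing factors for (3).
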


Now we shall define in a natural way Jordan blocks of irreducible tempered representations of classical groups. We shall  use here rather irreducible square integrable representations, than the pairs that parameterize them. Therefore, we
   define
$$
Jord(\pi)_{\text{d.s.}}= \{\delta(\rho,n); (\rho,n)\in Jord(\pi)\}.
$$

We can  consider a set  in an obvious way as a multiset (we shall do this below).

We now extend the definition of Jordan blocks to the case of irreducible tempered representations. This is very simple and natural extension, which seems to be present in the literature for a long time (at least implicitly). 

 \begin{definition}
 Let $\pi_{j_1\d_1,\dots,j_n\d_n}$ be an irreducible e-tempered representation defined in Proposition \ref{Pr-def-t}, and let  
 $$
 \gamma_1,\dots,\gamma_m\in D^u_{\pi,\text{irr}}\cup\{\d_1,\dots,\d_n\}. 
 $$
 Then the Jordan blocks 
 $$
 Jord(\gamma_1\t\dots\t\gamma_m\r \pi_{j_1\d_1,\dots,j_n\d_n}) 
 $$
  attached to the irreducible tempered representation
$
\gamma_1\t\dots\t\gamma_m\r \pi_{j_1\d_1,\dots,j_n\d_n}
$
are defined to be the multiset
$$
(\gamma_1,\dots,\gamma_m, \check\gamma_1,\dots,\check\gamma_m) +2(\d_1,\dots,\d_n)+
Jord(\pi)_{\text{d.s.}}.
$$

\end{definition}

\bigskip

We  attach to $Jord(\gamma_1\t\dots\t\gamma_m\r \pi_{j_1\d_1,\dots,j_n\d_n}) $ the corresponding set
$$
\{\gamma_1,\dots,\gamma_m, \check\gamma_1,\dots,\check\gamma_m\} \cup\{\d_1,\dots,\d_n\}\cup Jord(\pi)_{\text{d.s.}},
$$
which will be denoted by
$$
|Jord(\gamma_1\t\dots\t\gamma_m\r \pi_{j_1\d_1,\dots,j_n\d_n}) |.
$$

We can now extend the definition of partially defined function to the tempered case:

 \begin{definition}
 Let $\pi_{j_1\d_1,\dots,j_n\d_n}$ be an irreducible e-tempered representation  defined in Proposition \ref{Pr-def-t}, and let $\gamma_1,\dots,\gamma_m\in D^u_{\pi,\text{irr}}\cup\{\d_1,\dots,\d_n\}$. Denote by $\tau$ 
  the irreducible tempered representation
  $$
  \tau=\gamma_1\t\dots\t\gamma_m\r \pi_{j_1\d_1,\dots,j_n\d_n} .
  $$ 
Then the partially defined function $\epsilon_\tau$ attached to $\tau$ is a function defined on a subset 
of $|Jord(\tau)|\cup |Jord(\tau)|\t |Jord(\tau)|$ (with values in $\{\pm1\}$), which satisfies the following requirements:

\begin{enumerate}
\item If $\d\in |Jord(\tau)|$ is not $F'/F$-selfdual, then $\e_\tau(\d)$ is not defined.

\item If $\d\in |Jord(\tau)|$ is  $F'/F$-selfdual, if it has even multiplicity in $Jord(\tau)$ and if $\d\not\in\{\d_1,\dots,\d_n\}$\footnote{In this case, if we write $\d$ as $\d(\rho,a)$, then $(\rho,a)$ does not satisfy (J1)}, then $\e_\tau(\d)$ is not defined.

\item If $\d\in |Jord(\tau)|$ is  $F'/F$-selfdual, if it has even multiplicity in $Jord(\tau)$ and if $\d\in\{\d_1,\dots,\d_n\}$, then $\e_\tau(\d)$ is  defined, and
$$
\e_\tau(\d_i)=j_i.
$$

\item  If $\d=\d(\rho,a)\in |Jord(\tau)|$ has odd multiplicity in $Jord(\tau)$, then $\d\in Jord(\pi)_{\text{d.s.}}$, and further $\e_\tau(\d)$ is  defined if and only if $\e_\pi((\rho,a))$ is defined. If it is defined, then
$$
\e_\tau(\d)=\e_\pi((\rho,a)).
$$

\item  Let $\d_1=\d(\rho_1,a_1),\d_2=\d(\rho_2,a_2)\in |Jord(\tau)|$. If $\e_\tau((\d_1,\d_2))$ is defined, then $\d_1,\d_2\in Jord(\pi)_{\text{d.s.}}$. 
In that case $\e_\tau((\d_1,\d_2))$ is  defined if and only if $\e_\pi((\rho_1,a_1))e_\pi((\rho_2,a_2))^{-1}$ is defined,
and then
$$
\e_\tau((\d_1,\d_2))=\e_\pi((\rho_1,a_1))e_\pi((\rho_2,a_2))^{-1}.
$$

\end{enumerate}

\end{definition}

Now we shall give very brief description of the irreducible tempered representations  in terms of Jordan blocks,  partially defined functions and partial cuspidal supports (as it was done for irreducible square integrable representations in Theorem 6.1 of \cite{Moe-T}). 

\begin{definition} A tempered triple
$$
(Jord,\s,\e)
$$
is a triple  for which holds:

\begin{enumerate}

\item $\text{Jord}$ is a finite multiset in $D^u$ which satisfies:

\begin{enumerate}

\item
Jord is $F'/F$-selfdual (i.e. if  $\text{Jord}=(\d_1,\dots,\d_k)$, then $\text{Jord}=(\check\d_1,\dots,\check\d_k)$). 
 
 \item 
 If $\d=\d(\rho,a)$ from Jord is $F'/F$-selfdual and $(\rho,a)$ does not satisfy (J1), then the multiplicity of $\d$ in Jord is even.
 
 \end{enumerate}

\item $\s$ is an irreducible cuspidal representation of a classical group.

\item
$\e$ is a function which takes values in $\{\pm1\}$. It is defined on a subset of the set $|\text{Jord}|\cup |\text{Jord}|\t |\text{Jord}|$. Each $\d_1$ or $(d_2,\d_3)$ from the domain of $\e$ satisfies:
$$
\text{ \ \ \ \  \ \ \ \ If $\d=\d(\rho,a)=\d_1$, $\d_2$ or $\d_3$, then $\d$ is $F'/F$-selfdual and $(\rho,a)$ satisfies (J1).}
$$
The domain of $\e$ is described as follows: 

\
\begin{enumerate} 

\item If the multiplicity of   $\d$ in $\text{Jord}$ is even, then $\e$ is defined on $\d$. 

\item If the multiplicity of   $\d$ in $\text{Jord}$ is odd, then $\e$   is defined on $\d=\d(\rho,a)$ if and only if $a$ is even or $a$ is odd and $\text{Jord}_\rho(\sigma)=\emptyset$.

\item $\e$ is defined on $(\d(\rho_1,a_1),\d(\rho_2,a_2))$ if and only if the multipicities of both $\d(\rho_1,a_1)$ and $\d(\rho_2,a_2)$ in Jord are odd, and if $\rho_1=\rho_2$.

\end{enumerate} 

\

The partially defined function $\e$ needs to satisfy: 

\

\begin{enumerate} 
\item[(a)]
If $\e$ is defined on $\d_1$, $\d_2$ and $(\d_1,d_2)$, then
$$
\e(\d_1,\d_2)=\e(d_1)\e(d_2)^{-1}.
$$

\item[(b)]
If $\e$ is define on $(d_1,\d_2)$ and $(d_2,\d_3)$, then 
$$
\e(d_1,\d_3)=\e(d_1,\d_2)\e(d_2,\d_3).
$$

\end{enumerate} 

\item Denote by $\text{Jord}^{\text{(J1),odd}}$ the multiset that we get  from Jord deleting all $\d=\d(\rho,a)$'s which are not $F'/F$-selfdual, or which are, but 
 have even multiplicity in Jord (all what remains satisfy (J1)). Let $\e'$ be the natural restriction of $\e$ to $|\text{Jord}^{\text{(J1),odd}}|\cup |\text{Jord}^{\text{(J1),odd}}|\t |\text{Jord}^{\text{(J1),odd}}|$. Denote $\{(\rho,a);\d(\rho,a)\in |\text{Jord}^{\text{(J1),odd}}|\}$ by $|\text{Jord}^{\text{(J1),odd}}|'$, and denote by $\e''$ the function that we get on $|\text{Jord}^{\text{(J1),odd}}|$ transferring $\e'$ in obvious way. Then
$$
(|\text{Jord}^{\text{(J1),odd}}|',\s,\e'')
$$
needs to be an admissible triple (as it is defined in \cite{Moe-Ex}; see also \cite{Moe-T}).
\end{enumerate}
\end{definition}

 Now we can express the parameterization of the irreducibe  tempered representations that we have obtained in the following way. 
The map 
\begin{equation}
\tau \rightarrow (Jord(\tau),\tau_{cusp},\epsilon_\tau)
\end{equation}
 defines a bijection from the
set of  equivalence classes of irreducible tempered
representations of groups $S_n$  onto the set of all tempered triples.

\begin{remark}
We   consider  quasi-split classical groups
and their generic irreducible tempered representations in this remark. For these groups, we shall follow the  conventions of \cite{Han} regarding non-degenerate characters of the maximal unipotent subgroups. 

 The definition of the partially defined function  attached to an irreducible square integrable representation of a classical group depends (only) on the choice of  indexing of irreducible  constituents of
$$
\rho\rtimes \pi_{cusp}=\tau_{1}\oplus\tau_{-1},
$$
when $(\pi_{cusp}, \rho)$ runs over all pairs of irreducible $F'/F$-selfdual cuspidal representations $\rho$ of   general linear groups and  irreducible cuspidal representations  $\pi_{cusp}$ of classical groups, such that $\rho\rtimes \pi_{cusp}$ reduces (see \cite{T-inv} for more details). In this remark, we shall  assume that in the case that $ \pi_{cusp}$ is generic,  we have chosen always $\tau_1$ to be generic. Then from \cite{Han} and \cite{Mu2} follows directly that an irreducible square integrable representation $\pi$ of a quasi-split classical group is generic (for the fixed non-degenerate character of the maximal unipotent subgroup) if and only if the partial cuspidal support $ \pi_{cusp}$ of $\pi$ is generic and if the partially defined function $\epsilon_\pi$ attached to $\pi$  takes value one on elements and pairs from $Jord(\pi)$, whenever it is defined on them (see also \cite{Mu1}). This was also proved by C. M\oe glin in an unpublished manuscript. We shall briefly comment this result.

If $\pi$ is a generic irreducible square integrable representation, then obviously (by Theorem 2 of \cite{R-Wh}) the partial cuspidal support $ \pi_{cusp}$ of $\pi$ is generic. Further,
in Proposition 3.1 of \cite{Han} is shown that  the partially defined function $\epsilon_\pi$ attached to $\pi$  takes value one on  pairs from $Jord(\pi)$, whenever it is defined on them (see also \cite{Mu1}).
Moreover, by  the remark after Proposition 3.1 of \cite{Han}, the partially defined function takes value one on elements  from $Jord(\pi)$, whenever it is defined on them (one can consult   also \cite{Mu1}  again). Let $(\rho,a)\in Jord(\pi)$. 
We shall additionally comment here the case of odd $a$. Then $\rho\rtimes \pi_{cusp}$ reduces. Now Proposition 4.1 of \cite{T-inv} and \cite{R-Wh}  (and also Proposition 3.1 of \cite{Han}) imply that the partially defined function  on the element   $(\rho,a)$ takes the value one.

Let $\pi$ be an irreducible square integrable representation  of a classical group such that the partial cuspidal support $ \pi_{cusp}$ of $\pi$ is generic, and such that the partially defined function $\epsilon_\pi$ attached to $\pi$  takes value one on elements and pairs from $Jord(\pi)$, whenever it is defined on them.
Now first Proposition 1.1 of \cite{Mu2} implies that the generic subquotient of the representation parabolically induced  by corresponding irreducible cuspidal representation, where $\pi$ is a subquotient, must be square integrable. Now the above discussion implies that this generic square integrable subquotient is $\pi$. Thus, $\pi$ is generic.

Let $\pi$ be an irreducible generic square integrable representation  of a classical group and let $\delta$ be an  irreducible $F'/F$-selfdual square integrable representation of  a general linear group, such that $\delta\rtimes \pi$ reduces. Then  precisely one of the irreducible constituents of $\delta\rtimes \pi$ is generic (\cite{R-Wh}). Denote it by $\tau_{gen}$. Let $\d=\d(\rho,b)$, where $\rho $ is irreducible $F'/F$-selfdual cuspidal representation of  a general linear group and $b\geq 1$.

Suppose that we are in the setting of  Lemma 
\ref{Le-def-main}.
The representation 
$$
\delta([\nu^{(a-1)/2+1}\rho,\nu^{(b-1)/2}\rho])^2
\t \delta([\nu^{-(a-1)/2} \rho,\nu^{(a-1)/2}\rho])\r \pi
$$
 has a unique generic irreducible  subquotient, and it must be equivalent to $\tau_{gen}$ (\cite{R-Wh}). By the generalized injectivity conjecture proved for classical groups in \cite{Han}, the generic subquotient must be a subrepresentation. Thus, 
$$
\tau_{gen} \hookrightarrow 
\delta([\nu^{(a-1)/2+1}\rho,\nu^{(b-1)/2}\rho])^2
\t \delta([\nu^{-(a-1)/2} \rho,\nu^{(a-1)/2}\rho])\r \pi.
$$
Now    Definition
\ref{Def-1} (see also Corollary 
\ref{Cor-1})
implies $\pi_{\delta(\rho,b)}\cong \tau_{gen}$. Thus, $\pi_{\delta(\rho,b)}$ is generic (and $\pi_{-\delta(\rho,b)}$ is not).
In the same way one sees that also in the settings of Lemma 
\ref{Le-def-even}
and Lemma 
\ref{Le-def-odd-2},
 $\pi_{\delta(\rho,b)}$ is also generic and $\pi_{-\delta(\rho,b)}$ is  not.
 In the setting of 
 \ref{Le-def-odd-1},
 \cite{R-Wh} implies that $\pi_{\delta(\rho,b)}$ is  generic.

 This implies that $\pi_{\epsilon\delta}$ is generic if and only if $\pi$ is generic and $\epsilon=1$. 

From this and directly from \cite{R-Wh} follows that an irreducible $e$-tempered representation 
$$
\pi_{\epsilon_1\delta_1,\dots ,\epsilon_n\delta_n}
$$
 is generic if and only if $\pi$ is generic and $\epsilon_1=\dots=\epsilon_n=1$.

At the end we conclude (from above discussion and \cite{R-Wh}) that the irreducible tempered representation in (1) of Theorem \ref{Thm-temp-gen} is generic, if and only if the $e$-tempered representation there is generic.

\end{remark}

\section{Some elementary facts on Bernstein center of direct products}\label{elem-b-c}

\setcounter{equation}{0}

First we shall recall  the definition of the Bernstein center (see \cite{B} and \cite{B-D} for more details).

Let $G$ be the group of $F$-rational points of a connected reductive group defined over $F$ (we have fixed a local non-archimedean field $F$).
The set of equivalence classes of irreducible smooth representations of $G$ is called the non-unitary dual of $G$ and denoted by $\tilde G$.  It carries a natural topology of uniform convergence of matrix coefficients over compact subsets (see \cite{T-Geo} for  more details). Denote the Hausdorffization of $\tilde G$ by $\Theta (G)$. Then $\Theta (G)$ is the set of conjugacy classes  of all pairs $(M,\rho)$, where $M$ is a Levi subgroup of $G$ and $\rho$ is an equivalence class of  irreducible cuspidal representations of $M$ (the conjugacy class of $(M,\rho)$ will be denoted by $[(M,\rho)]$). The fibers of the Hausdorffization map $
\tilde G\to \Theta(G)$ are finite. The structure of the complex algebraic variety on the unramified characters of Levi subgroups defines in a natural way a structure of algebraic variety on $\Theta (G)$. The algebra of all the regular functions on $\Theta (G)$ will be denoted by $\mathfrak Z(G)$. The subalgebra of all regular functions supported only on finitely many connected components is denoted by $\mathfrak Z(G)_0$.

The Bernstein center of $G$ is the algebra $\mathcal Z(G)$ of all endomorphisms of the category of all smooth representations of $G$. For such an endomorphism $z$, denote by $\tilde z:\tilde G\to \mathbb C$ the mapping which attaches to $\pi\in \mathbb C$ the scalar $\chi_\pi(z)$ by which $z$ acts in the representation space of $\pi$. Now $\tilde z$ factors through the Hausdorffization $\tilde G\to \Theta(G)$, and it is a regular function on $\Theta(G)$, i.e. an element of $\mathfrak Z(G)$. In this way one gets an isomorphism of the Bernstein center $\mathcal Z(G)$ onto the algebra $\mathfrak Z(G)$. In what follows, we shall identify the Bernstein center $\mathcal Z(G)$ with $\mathfrak Z(G)$.

Let  $\chi$ be a character of $\mathfrak Z(G)$ which does not vanish on $\mathfrak Z(G)_0$. In this paper we,  shall consider only such characters of $\mathfrak Z(G)$, and call them infinitesimal characters of $G$. For such a character, there exists $[(M,\rho)]\in \Theta(G)$ such that $\chi$ is the evaluation in that point.

Let $(\pi,V)$ be a smooth representation  $G$. Then $\pi$ is called a $\chi$-representation, or a representation with infinitesimal character $\chi$, if each $z\in \mathfrak Z(G)$ acts in the representation space of $\pi$ as the multiplication by the scalar $\chi(z)\in \mathbb C$. Further, $\pi$ will be called a representation of type $\chi$ if each irreducible subquotient of $\pi$ is a $\chi$-representation.

For  a smooth representation $(\pi,V)$ of  $G$ we denote by  
$$
(\pi_{[\chi]},V_{[\chi]})
$$
 the biggest subrepresentation of $(\pi,V)$ which is of type $\chi$ (one easily shows that such subrepresentation exists). Note that the possibility that this subrepresentation is $\{0\}$ is not excluded (actually, in the cases that we shall consider, this will be almost always the case). Denote
 $$
 V_\chi=
\{v\in V; z.v=\chi(z)v, \forall z\in \mathfrak Z(G)\}.
 $$
 This is a subrepresentation of $\pi$,  denoted by $\pi_\chi$. Clearly, this is a $\chi$-representation.

\smallskip

Let $H$ and $L$ be the  groups of $F$-rational points of a connected reductive group defined over $F$. Denote
$$
M=H\times L.
$$
Then
$$
(\sigma,\tau)\mapsto \sigma\otimes\tau
$$
gives identification of $\tilde H\times\tilde L$ with $\tilde M$. In what follows, we shall always assume this identification.

In the rest of this section we shall consider representations of $M$, and the action of the Bernstein center of $H$ on the representations of $M$. 

\begin{lemma}\label{Le-bc}
\begin{enumerate}
\item
Let $\chi$ be an infinitesimal character of $H$, let $\sigma$  be a smooth $\chi $-repre\-sentation of $H$ and let $\tau$ be a smooth representation of $L$. Then    the representation $\sigma\otimes\tau$ of $M$ is a $\chi$-representation, when we look at it as a representation of $H$ (by restriction).

\item Let $(\mu,V)$ be a smooth representation of $M$ of finite length. Then there exists finitely many infinitesimal characters $\chi_1,\dots,\chi_k$ of $H$ such that
\begin{equation}\label{DirSum}
V=V_{[\chi_1]}\oplus\dots\oplus V_{[\chi_k]}.
\end{equation}
Moreover, each $V_{[\chi_i]}$ and each $V_{\chi_i}$ is an $M$-subrepresentation of $V$.
\end{enumerate}
\end{lemma}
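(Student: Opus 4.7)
I'd prove (1) essentially by unwinding definitions: the restriction of $\sigma\otimes\tau$ to $H$ acts only through the first tensor factor, so every $z\in\mathfrak{Z}(H)$ acts as $\chi(z)\otimes\mathrm{id}_\tau = \chi(z)\cdot\mathrm{id}$, and $\sigma\otimes\tau$ is a $\chi$-representation. For (2), I would first note that the finite-length hypothesis supplies a Jordan--H\"older series whose factors are, by a standard result for smooth representations of a direct product, of the form $\sigma_j\otimes\tau_j$ with $\sigma_j$ irreducible over $H$ and $\tau_j$ irreducible over $L$; part (1) then tells us that as $H$-representations these factors have infinitesimal characters drawn from a finite set $\{\chi_1,\dots,\chi_k\}$ of distinct points of $\Theta(H)$. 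These will be the $\chi_i$ appearing in the statement.

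The main step is to observe that every $z\in\mathfrak{Z}(H)$, acting on $V$, commutes with the whole $M$-action: commutation with $H$ is tautological, while commutation with $L$ follows because each $l\in L$ defines an $H$-intertwiner of $V$ (as $H$ and $L$ commute inside $M$), and $\mathfrak{Z}(H)$ is by definition the center of the category of smooth $H$-representations. Thus the action of $\mathfrak{Z}(H)$ on $V$ factors through $\mathrm{End}_M(V)$. Since $V$ has finite length, $\mathrm{End}_M(V)$ is finite-dimensional (filter by a composition series; successive quotients embed into finite direct sums of Hom-spaces between irreducible $M$-representations, each finite-dimensional by Schur). The image $A$ of $\mathfrak{Z}(H)$ in $\mathrm{End}_M(V)$ is therefore a finite-dimensional commutative $\mathbb{C}$-algebra whose characters are precisely $\chi_1|_A,\dots,\chi_k|_A$, still pairwise distinct because $\mathfrak{Z}(H)$ separates any finite set of points of $\Theta(H)$. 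Hence $A$ is Artinian with exactly $k$ maximal ideals, decomposes as a product of $k$ local factors, and produces orthogonal idempotents $e_1,\dots,e_k \in A \subset \mathrm{End}_M(V)$ with $e_i(\chi_j)=\delta_{ij}$ and $\sum_i e_i = 1$.

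Setting $V_i := e_i V$ gives an $M$-stable direct sum decomposition which I would then identify with \eqref{DirSum}. Indeed, on $V_i$ the maximal ideal $\mathfrak{m}_i$ of $A$ acts nilpotently, so every irreducible $H$-subquotient of $V_i$ has infinitesimal character $\chi_i$, hence $V_i\subseteq V_{[\chi_i]}$; conversely, any subrepresentation $W\subseteq V$ of type $\chi_i$ satisfies $e_j W = 0$ for $j\neq i$, because $\mathrm{Hom}_H(X,Y)=0$ whenever $X$ is of type $\chi_i$ and $Y$ of type $\chi_j$ with $i\neq j$ (the image would have both infinitesimal characters), so $W\subseteq V_i$. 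Finally, $V_{\chi_i}$ is $M$-stable by a one-line commutation check: for $v\in V_{\chi_i}$ and $m\in M$, $z(m\cdot v) = m\cdot(z\cdot v) = \chi_i(z)(m\cdot v)$ for every $z\in\mathfrak{Z}(H)$. The main obstacle is packaging the Artinian decomposition of $A$ cleanly, i.e.\ verifying the finite-dimensionality of $\mathrm{End}_M(V)$ together with the distinctness of $\chi_1|_A,\dots,\chi_k|_A$; once $A$ is known to be semilocal, everything else is formal.
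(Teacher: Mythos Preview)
Your proof is correct, and for part (1) it coincides with the paper's (which simply declares the claim evident). For part (2) you take a genuinely different route: the paper argues directly and elementarily, first checking $M$-stability of $V_{[\chi_i]}$ and $V_{\chi_i}$ by the same commutation observation you make, then proving the decomposition \eqref{DirSum} by hand. Concretely, the paper shows the sum $V_{[\chi_1]}+\cdots+V_{[\chi_k]}$ is direct via an induction on $k$, choosing at each step a $z\in\mathfrak{Z}(H)$ separating the relevant points of $\Theta(H)$ and playing with nilpotence; it then shows the sum exhausts $V$ by a further contradiction argument, again built from a well-chosen separating element $z_0$.

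Your approach packages all of this into one structural step: the image $A$ of $\mathfrak{Z}(H)$ in $\mathrm{End}_M(V)$ is a finite-dimensional commutative $\mathbb{C}$-algebra, hence semilocal, and the resulting primitive idempotents yield the decomposition at once. This is cleaner and more conceptual, and it makes the $M$-stability of the summands automatic (the idempotents already lie in $\mathrm{End}_M(V)$). The paper's approach, by contrast, uses nothing beyond the existence of separating elements in $\mathfrak{Z}(H)$ and avoids invoking finite-dimensionality of $\mathrm{End}_M(V)$ or any Artinian structure theory; it is more self-contained but longer. One small point worth making explicit in your write-up: the claim that the characters of $A$ are \emph{exactly} the $\chi_i|_A$ uses faithfulness of $A$ on $V$ (any local factor of $A$ with zero isotypic piece in $V$ would act by zero, hence vanish), which you implicitly rely on when counting maximal ideals.
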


\begin{proof} The claim (1) is evident.

For (2), recall that by the definition, $V_{[\chi_i]}$ is invariant for the action of $H$. From the other side, the action of $L$ commutes with the action of $H$ (i.e. the operators corresponding to the action of $L$ can be regarded as an  $H$-intertwinings). This implies that the action of $\mathfrak Z(H)$ commutes with the action of $M$. Therefore, if we act by $M$ on $V_{[\chi_i]}$, we get again representation of type $\chi_i$. The maximality of $V_{[\chi_i]}$ implies $M$-invariance of it.
Further, $V_{\chi_i}$ is an $M$-subrepresentation since the action of $L$ commutes with the action of $H$.

Let  $\sigma_1\otimes \tau_1$, $\sigma_2\otimes \tau_2$,    \dots\dots  , $\sigma_n\otimes \tau_n$ be the set of all  equivalence classes of irreducible subquotients of $\mu$. Write $\{\chi_{\sigma_1},\dots,\chi_{\sigma_n}\}=\{\chi_1,\dots,\chi_k\}$, where we assume $\chi_i\ne\chi_j$ for $i\ne j$.  Then each $\chi_i$ is evaluation at some point $\theta_i\in\Theta(M)$.  Now we shall prove that \eqref{DirSum} holds (the proof is standard, but nevertheless we give details below). We prove by induction on $k$ that the sum $V_{[\chi_1]}+\dots+ V_{[\chi_k]}$ is direct. Suppose $k=2$ and chose  $z\in \mathfrak Z(G)$ such that $z(\theta_{1})=0$ and $z(\theta_{2})=1$. Let $v\in V_{[\chi_1]}\cap V_{[\chi_2]}$. Suppose $v\ne 0$. Now since $v\in V_{[\chi_1]}$, we  can chose integer $e\geq 0$ such that $z^e. v\ne 0$  and $z^{e+1}.v=0$ (for $e=0 $ we take formally $z^0.v=v$). Denote $v'=z^e. v$. Then $v'\ne 0$ and $z. v'=0$. But condition $v'\in V_{[\chi_2]}$ and $v'\ne 0$ easily implies $z.v'\ne0$ (using $z(\theta_2)=1$). This contradiction shows that the sum $V_{[\chi_1]}+ V_{[\chi_2]}$ is direct.

Now we shall present the inductive step. Let $k\geq 3$. Fix $i_0\in\{1,\dots,k\}$.  Take $z_0\in \mathfrak Z(G)$ such that $z_0(\theta_{i_0})=1$ and $z_0(\theta_{i})=0$ for $i\ne i_0$. Take
$$
v\in V_{[\chi_{i_0}]}\cap (\oplus_{i\ne i_0}V_{[\chi_i]}).
$$
Suppose $v\ne 0$. Now since $v\in V_{[\chi_{i_0}]}$,  for any integer $\ell\geq 1$ the fact that $z_0^\ell(\theta_{i_0})=1$ easily implies 
\begin{equation}
\label{ne0}
z_0^\ell.v\ne0
\end{equation}
(look at the subrepresentation generated by $v$, and some its maximal subrepresentation).

 We can write $v'=\sum_{i\ne i_0} v_i'$ where $v_i'\in  V_{[\chi_i]}$. Now this formula and the fact that $z_0(\theta_i)=0$ for $i\ne i_0$ imply that 
 $$
 z_0^m.v=0
 $$
  for some (big enough) integer $m\geq1$. This contradicts \eqref{ne0}.

Suppose $V_{[\chi_1]}+\dots+ V_{[\chi_k]}\ne V$. Chose an $M$-subrepresentation $V'$ of $V$ containing $V_{[\chi_1]}+\dots+ V_{[\chi_k]}$, such that the quotient representation $V'/(V_{[\chi_1]}+\dots+ V_{[\chi_k]})$ is irreducible. Then the infinitesimal character of this quotient must be some $\chi_{i_0}$. 
Take again some $z_0\in \mathfrak Z(G)$ such that $z_0(\theta_{i_0})=1$ and $z_0(\theta_{i})=0$ for $i\ne i_0$.  
Now we can chose integer $l\geq 1$ such that  holds
$$
z^l_0.w\in V_{[\chi_{i_0}]}
$$
for any  $w\in V_{[\chi_1]}+\dots+ V_{[\chi_k]}$.
 Let $v\in V'\backslash  (V_{[\chi_1]}+\dots+ V_{[\chi_k]})$. Then 
 $$
 z^m_0.v-v\in V_{[\chi_1]}+\dots+ V_{[\chi_k]}
 $$
    for any integer $m\geq 1$, which implies $z_0^m.v\in V'\backslash  (V_{[\chi_1]}+\dots+ V_{[\chi_k]})$ since $z_0(\theta_{i_0})=1$.
Now for any $z\in\mathfrak Z(H)$ holds
$$
z.(v+(V_{[\chi_1]}+\dots+ V_{[\chi_k]})=\chi_{i_0}(z)v+(V_{[\chi_1]}+\dots+ V_{[\chi_k]}),
$$
which implies
$$
z.v-\chi_{i_0}(z)v\in V_{[\chi_1]}+\dots+ V_{[\chi_k]},
$$
and further
\begin{equation}\label{Eq-chi}
z.(z^l_0.v)-\chi_{i_0}(z)(z^l_0.v )\in V_{[\chi_{i_0}]}.
\end{equation}
Recall $z^l_0.v\notin  V_{[\chi_{i_0}]}$ since $z^m_0.v-v\in V_{[\chi_1]}+\dots+ V_{[\chi_k]}$ for any integer $m\geq 1$, and $v\notin  V_{[\chi_1]}+\dots+ V_{[\chi_k]}$. Now \eqref{Eq-chi} implies that $(V'/V_{[\chi_{i_0}]})_\chi$ is non-trivial. Consider the projection $p:V'\to V'/V_{[\chi_{i_0}]}$. Then $p^{-1}((V'/V_{[\chi_{i_0}]})_\chi)$ is a representation of type $\chi_{i_0}$, and it strictly contains $V_{[\chi_{i_0}]}$. This contradiction completes the proof of the lemma.
\end{proof}

\begin{corollary}\label{Cor-bc} Let $G$ be the group of $F$-rational points of a connected reductive group defined over $F$ and let $P=MN$ be its Levi subgroup. Suppose that $M$ is (isomorphic to) a direct product of reductive groups $H$ and $L$. Take an irreducible smooth representation $(\pi,V)$ of $G$ such that the normalized Jacquet module $r^G_M(\pi)$ of $\pi$ with respect to $P$ is non-trivial (i.e. $\ne\{0\}$). Let $(\sigma\otimes \tau,U)$  be an   irreducible subquotient of  $r^G_M(\pi)$. Then

\begin{enumerate}
\item
There exists an irreducible smooth representation $\sigma'\otimes \tau'$ of $M$ such that
$$
\chi_{\sigma'}=\chi_\sigma
$$
and
$$
\pi\hookrightarrow \text{\rm Ind}_P^G(\sigma'\otimes \tau').
$$

\item
If for any irreducible subquotient $\mu$ of $(r^G_M(\pi))_{[\chi_\sigma]}$ there exists an irreducible representation $\tau'$ of $L$ such that  $\mu\cong\sigma\otimes \tau'$, then  
$$
\pi\hookrightarrow \text{\rm Ind}_P^G(\sigma\otimes \tau'')
$$
for some irreducible representation $\tau''$ of $L$.

\item If $\sigma$ is cuspidal, then there exists an irreducible smooth representation $ \tau'$ of $L$ such that
$$
\pi\hookrightarrow \text{\rm Ind}_P^G(\sigma\otimes \tau').
$$

\item If  $(r^G_M(\pi))_{[\chi_\sigma]}$ is an irreducible $M$-representation, then
$$
\pi\hookrightarrow \text{\rm Ind}_P^G(\sigma\otimes
\tau
).
$$
More generally, if all irreducible subquotients of $(r^G_M(\pi))_{[\chi_\sigma]}$ are isomorphic to $\sigma\otimes \tau$, 
then again $\pi\hookrightarrow \text{\rm Ind}_P^G(\sigma\otimes \tau).$
\end{enumerate}
\end{corollary}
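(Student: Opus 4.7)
The overall strategy combines Frobenius reciprocity, which gives the equivalence
$$
\pi\hookrightarrow \text{Ind}_P^G(\mu)\iff \mu \text{ is an irreducible quotient of } r_M^G(\pi)
$$
(for $\pi$ and $\mu$ irreducible), with the decomposition of $r_M^G(\pi)$ according to $H$-infinitesimal characters supplied by Lemma \ref{Le-bc}(2). The first step is to apply that lemma to the finite-length $M$-representation $r_M^G(\pi)$, obtaining an $M$-stable direct sum decomposition
$$
r_M^G(\pi)=V_{[\chi_1]}\oplus\dots\oplus V_{[\chi_k]}.
$$
Since $\sigma\otimes\tau$ is an irreducible subquotient of $r_M^G(\pi)$ and $\sigma$ carries the $H$-infinitesimal character $\chi_\sigma$, one of the $\chi_i$ equals $\chi_\sigma$ and the summand $V_{[\chi_\sigma]}$ is nonzero; in particular it admits at least one irreducible $M$-quotient.

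For (1), I would pick any irreducible quotient $\sigma'\otimes\tau'$ of $V_{[\chi_\sigma]}$. Because the summand is of type $\chi_\sigma$, so is this quotient, and hence $\chi_{\sigma'}=\chi_\sigma$. Composing with the projection $r_M^G(\pi)\twoheadrightarrow V_{[\chi_\sigma]}$ coming from the direct sum decomposition gives a surjection $r_M^G(\pi)\twoheadrightarrow \sigma'\otimes\tau'$, and Frobenius reciprocity yields $\pi\hookrightarrow \text{Ind}_P^G(\sigma'\otimes\tau')$. Part (2) is then immediate: the hypothesis forces every irreducible quotient of $V_{[\chi_\sigma]}$ to have the form $\sigma\otimes\tau''$, so applying (1) to such a quotient produces the desired embedding. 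Part (4) is the special case in which this hypothesis is trivially satisfied (when $V_{[\chi_\sigma]}$ is irreducible, or more generally when all its irreducible subquotients are $\cong\sigma\otimes\tau$, every irreducible quotient is forced to be $\sigma\otimes\tau$).

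For (3), I would reduce to (2) by showing that when $\sigma$ is cuspidal, the $H$-infinitesimal character determines $\sigma$. Indeed, the cuspidal support class of $\sigma$ in $\Theta(H)$ is $[(H,\sigma)]$, so if $\sigma'\otimes\tau'$ is any irreducible subquotient of $V_{[\chi_\sigma]}$, then $\sigma'$ has cuspidal support conjugate in $H$ to $(H,\sigma)$; as the only $H$-conjugate of $H$ inside $H$ is $H$ itself, this forces $\sigma'\cong\sigma$. Consequently every irreducible subquotient of $V_{[\chi_\sigma]}$ is of the form $\sigma\otimes\tau'$, and (2) applies. The only non-routine input is Lemma \ref{Le-bc}(2); once that direct sum decomposition is in hand, the rest is Frobenius reciprocity together with the standard rigidification of cuspidal representations by their infinitesimal character, so I do not expect any serious obstacle.
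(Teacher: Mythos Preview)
Your proposal is correct and follows essentially the same approach as the paper: use Lemma~\ref{Le-bc}(2) to see that $(r_M^G(\pi))_{[\chi_\sigma]}$ is a nonzero $M$-stable direct summand, take an irreducible quotient of it (which is then a quotient of the full Jacquet module), apply Frobenius reciprocity, and deduce (2), (3), (4) from (1) exactly as you do. Your write-up is in fact slightly more explicit than the paper's about why a quotient of the summand is a quotient of the whole Jacquet module and about why a cuspidal $\sigma$ is determined by $\chi_\sigma$.
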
 

\begin{proof}  (1) We know by (1) of Lemma  \ref{Le-bc} that $(r_M^G(\pi))_{[\chi_\sigma]}\ne \{0\}$. By (2) of the same lemma, $(r_M^G(\pi))_{[\chi_\sigma]}$ is $M$-invariant. Since it is of finite length, it has an irreducible quotient. Denote it by $\sigma'\otimes\tau'$. Frobenius reciprocity implies
$
\pi\hookrightarrow \text{\rm Ind}_P^G(\sigma'\otimes \tau').
$
Observe that $\sigma'\otimes\tau'$ is a $\chi_{\sigma'}$-representations (as $H$-representation). But irreducible quotient of $(r_M^G(\pi))_{[\chi_\sigma]} $ has infinitesimal character $\chi_\sigma$ as $H$-representation. Therefore, $\chi_{\sigma'}=\chi_\sigma$. The proof of the (1) is now complete.

(2) is immediate consequence of (1).

(3) If $\sigma$ is cuspidal and $\sigma'$ is irreducible such that $\chi_\sigma=\chi_{\sigma'}$, then $\s\cong\s'$. Now from (2) immediately follows (3).

(4) follows also directly from (1).
\end{proof}

\begin{remark}\label{GL-inf} 
Two irreducible representations of general linear groups have the same infinitesimal character if and only if they have the same cuspidal support.
\end{remark}

\section{Interpretation of invariants of square integrable representations in terms of Jacquet modules}\label{jac-mod-int}
\setcounter{equation}{0}

Recall that an irreducible cuspidal representation $\s$ of a classical group (from the same series) is called the partial cuspidal support of an irreducible square integrable representation $\pi$ of a classical group, if there exists a smooth representations $\theta$ of a general linear group  such that
$
\pi\h\theta\r\s.
$
Now  
Frobenius reciprocity implies that  
$\theta\o\s$ is a quotient  of the appropriate Jacquet module of $\pi$. 
 In particular, $\theta\o\s$ is a subquotient, i.e.
$$
\theta\o\s\leq \mu^*(\pi).
$$
The following well-known proposition (which directly follows from (3) of Corollary \ref{Cor-bc}) implies that the converse holds:

\begin{proposition}\label{Pr-pcs} Let $\pi$ be an irreducible square integrable representation  of a classical group. Suppose that there exits  an irreducible cuspidal representation $\sigma$ of a classical group and an irreducible representations $\theta$ of a general linear group such that
$$
\theta\o\s\leq \mu^*(\pi).
$$
Then $\sigma$ is the partial cuspidal support of $\pi$.
\qed

\end{proposition}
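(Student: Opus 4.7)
The plan is to reduce the statement directly to part (3) of Corollary \ref{Cor-bc}, which is essentially designed for this purpose. The hypothesis $\theta\otimes\sigma\leq \mu^*(\pi)$ means, by definition of $\mu^*$, that for some $k$ with $0\leq k\leq n'$, the representation $\theta\otimes\sigma$ is an irreducible subquotient of the normalized Jacquet module $s_{(k)}(\pi)=r_{M_{(k)}}^G(\pi)$, where $\theta$ is on the $GL(k,F')$ factor and $\sigma$ is on the $S_{n-k}$ factor of $M_{(k)}\cong GL(k,F')\times S_{n-k}$.

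The key observation is that Corollary \ref{Cor-bc} is formulated for an arbitrary decomposition $M=H\times L$ of a Levi, and its hypothesis on cuspidality concerns the factor we call $H$. So I would apply it with the roles reversed from the usual writing convention: take $H=S_{n-k}$ (the classical factor, carrying $\sigma$) and $L=GL(k,F')$ (the general linear factor, carrying $\theta$). Under this identification, the subquotient $\theta\otimes\sigma$ of $r_M^G(\pi)$ becomes $\sigma\otimes\theta$ with $\sigma$ on the $H$-part. Since $\sigma$ is cuspidal by hypothesis, part (3) of Corollary \ref{Cor-bc} produces an irreducible representation $\theta'$ of $GL(k,F')$ such that
$$
\pi\hookrightarrow \text{Ind}_{P_{(k)}}^G(\sigma\otimes \theta')\cong \theta'\rtimes\sigma.
$$

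This embedding is exactly the defining property of the partial cuspidal support, so $\sigma=\pc$ by the uniqueness of $\pc$ (which has already been recalled in the excerpt). There is essentially no obstacle here: the whole content of the proposition is the Bernstein-center argument packaged in Corollary \ref{Cor-bc}(3), which lifts the subquotient condition to a genuine embedding whenever the relevant factor is cuspidal. The only thing one needs to verify carefully is that the decomposition $M_{(k)}=GL(k,F')\times S_{n-k}$ is indeed a direct product of reductive groups so that Lemma \ref{Le-bc} applies, which is immediate from the standard structure of the Levi subgroups recalled in Section \ref{notation}.
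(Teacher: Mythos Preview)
Your proof is correct and is exactly the argument the paper intends: the proposition is stated with a \qed\ because, as the paper says just before it, it ``directly follows from (3) of Corollary \ref{Cor-bc}''. Your only addition is making explicit the (harmless) swap of the roles of $H$ and $L$ so that the cuspidal factor $\sigma$ sits on the $H$-side, which is precisely what is needed to invoke part (3).
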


  Let $\pi$ be an irreducible square integrable representation of a
classical group $S_q$. Suppose that we have $a\in\jrp$ which has $a_-\in \jrp$. Suppose that
$$
\e_\pi((\rho,a))\e_\pi((\rho,a_-))^{-1}=1.
$$
This is the case if and only if  there exists an irreducible representation $\sigma$ of classical group such that
$$
\pi\h \delta([ \nu^{(a_--1)/2+1} \rho, 
\nu^{(a-1)/2}\rho]\r \s.
$$
This and Frobenius reciprocity imply that $\delta([\nu^{(a_--1)/2+1}\rho, \nu^{(a-1)/2}\rho])\o\sigma$  is a quotient of the Jacquet module of $\pi$. In particular,
$$
\delta([\nu^{(a_--1)/2+1}\rho, \nu^{(a-1)/2}\rho])\o\sigma\leq \mu^*(\pi).
$$
We have the converse:

\begin{proposition}\label{Pr-eps=} Let $\pi$ be an irreducible square integrable representation of a
classical group $S_q$. Suppose that there exists $a\in\jrp$ which has $a_-\in \jrp$, such that
$$
\delta([\nu^{(a_--1)/2+1}\rho, \nu^{(a-1)/2}\rho])\o\sigma\leq \mu^*(\pi)
$$
for some irreducible representation $\s$.  Then
$$
\e_\pi((\rho,a))\e_\pi((\rho,a_-))^{-1}=1.
$$
\end{proposition}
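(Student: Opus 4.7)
The plan is to use Corollary \ref{Cor-bc} to upgrade the given subquotient condition to an actual embedding, and then to use the Jordan-block hypotheses to pin down the $GL$-factor of that embedding as precisely $\delta([\nu^{(a_--1)/2+1}\rho,\nu^{(a-1)/2}\rho])$.

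First, I would apply (1) of Corollary \ref{Cor-bc} to the data $\delta([\nu^{(a_--1)/2+1}\rho,\nu^{(a-1)/2}\rho])\otimes\sigma \leq \mu^*(\pi)$. This yields an embedding
$$
\pi\h \text{Ind}(\sigma'\o \tau'),
$$
for some irreducible representations $\sigma'$ and $\tau'$, where $\chi_{\sigma'}=\chi_{\delta([\nu^{(a_--1)/2+1}\rho,\nu^{(a-1)/2}\rho])}$. By Remark \ref{GL-inf}, $\sigma'$ has cuspidal support the segment $[\nu^{(a_--1)/2+1}\rho,\nu^{(a-1)/2}\rho]$. By Lemma \ref{Le-e2}, we may and do assume $\tau'$ irreducible.

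The core step is to show that $\sigma'$ is forced to be $\delta([\nu^{(a_--1)/2+1}\rho,\nu^{(a-1)/2}\rho])$ itself, and not some other irreducible representation with that cuspidal support. If $\sigma'$ were associated to a nontrivial multisegment decomposition of $[\nu^{(a_--1)/2+1}\rho,\nu^{(a-1)/2}\rho]$ into sub-segments, then $\sigma'$ would embed into a product $\delta(\Delta_1)\times\dots\times\delta(\Delta_k)$ with $k\geq 2$, so that $\pi$ would embed into an induced representation of the form $\delta(\Delta_1)\times\dots\times\delta(\Delta_k)\r\tau'$ with the segments $\Delta_j=[\nu^{x_j}\rho,\nu^{y_j}\rho]$ partitioning $[(a_--1)/2+1,(a-1)/2]$. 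Each such embedding, combined with Proposition \ref{Pr-jb-main}(1) applied successively to the segments from outermost to innermost, would force each of the integers $2y_j+1$ to lie in $\jrp$ and each $2x_j-1$ to lie in $\jrp\cup\{-\infty\}$. In particular, at least one integer strictly between $a_-$ and $a$ (a boundary of the partition) would be forced into $\jrp$, contradicting the hypothesis $\{b\in\jrp; a_-<b<a\}=\emptyset$. This eliminates every alternative and forces $\sigma'\cong\delta([\nu^{(a_--1)/2+1}\rho,\nu^{(a-1)/2}\rho])$.

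Having the embedding $\pi\h \delta([\nu^{(a_--1)/2+1}\rho,\nu^{(a-1)/2}\rho])\r \tau'$ with $\tau'$ irreducible, Theorem \ref{Th-eps=} (together with Definition \ref{reduction=}, and the original characterization of $\ep$ recorded in the footnote of the theorem in the introduction) gives directly that $\tau'$ is square integrable, that $(\rho,a),(\rho,a_-)\in\jrp$ are "paired off" by $\ep$, and hence
$$
\e_\pi((\rho,a))\e_\pi((\rho,a_-))^{-1}=1,
$$
as desired. The main obstacle is the middle step: ruling out the non-Steinberg alternatives for $\sigma'$. The "no gap" condition on $\jrp$ between $a_-$ and $a$ is exactly tailored to make the Proposition \ref{Pr-jb-main} argument bite, but one must be careful to run it in the right order (peeling off the outer segments first so that the square-integrability criterion of the residual representation is preserved at each stage).
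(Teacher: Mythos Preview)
Your overall strategy matches the paper's: use Corollary \ref{Cor-bc}(1) to upgrade the subquotient to an embedding $\pi\hookrightarrow\sigma'\rtimes\tau'$ with $\sigma'$ having cuspidal support $[\nu^{(a_--1)/2+1}\rho,\nu^{(a-1)/2}\rho]$, and then show $\sigma'$ must be the full segment representation. Once that embedding is in hand, the conclusion is exactly the original definition of $\epsilon_\pi((\rho,a))\epsilon_\pi((\rho,a_-))^{-1}=1$ (your footnote citation is the right one; Theorem \ref{Th-eps=} concerns a different inducing segment and is not needed here).

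The gap is in your ``core step.'' You propose to peel off segments $\delta(\Delta_j)$ and apply Proposition \ref{Pr-jb-main}(1) successively. But that proposition requires the residual representation $\pi'$ at each stage to be \emph{square integrable}, and you have not established this; your parenthetical about ``preserving square integrability'' acknowledges the issue without resolving it. The paper avoids this entirely by using Proposition \ref{Pr-jb-jac}(1) instead, which only needs $\nu^x\rho\otimes\tau$ to be a subquotient of a Jacquet module of $\pi$, with no hypothesis on $\tau$. Concretely: the paper embeds $\theta$ (your $\sigma'$) into a product $\nu^{k_{\alpha(1)}}\rho\times\cdots\times\nu^{k_{\alpha(n)}}\rho$ for some permutation $\alpha$; at the first index $j$ where $\alpha(j)\neq j$ one has $\alpha(j)>j$, hence $k_{\alpha(j)}\leq k_{j-1}-2$, so $\nu^{k_{\alpha(j)}}\rho$ commutes past the preceding factors to the front, and Proposition \ref{Pr-jb-jac}(1) gives $2k_{\alpha(j)}+1\in\jrp\cap(a_-,a)$, a contradiction.

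Your multisegment phrasing can be made to work with the same fix: if $\sigma'\neq\delta([\nu^{(a_--1)/2+1}\rho,\nu^{(a-1)/2}\rho])$, the Langlands subrepresentation realization embeds $\sigma'$ into $\delta(\Delta_1)\times\cdots\times\delta(\Delta_k)$ with the \emph{lowest} subsegment $\Delta_1$ first, so its top exponent $y_1$ satisfies $y_1<(a-1)/2$. Then $\pi\hookrightarrow\nu^{y_1}\rho\times(\text{rest})$, and Proposition \ref{Pr-jb-jac}(1) (not \ref{Pr-jb-main}) yields $2y_1+1\in\jrp\cap(a_-,a)$. So replace your appeal to Proposition \ref{Pr-jb-main} by Proposition \ref{Pr-jb-jac}(1), and note explicitly which subsegment comes first in the embedding; then your argument goes through and is essentially a reformulation of the paper's permutation argument.
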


\begin{proof} 
The condition on the Jacquet module in the proposition and transitivity of Jacquet modules imply that
$$
\nu^{(a-1)/2}\rho\o\nu^{(a-1)/2-1}\rho\o\dots\o\nu^{(a_--1)/2+1}\rho\o
\s
$$
is a subquotient of the Jacquet module of $\pi$. Now (3) of Corollary \ref{Cor-bc} implies that 
\begin{equation*}
\pi\h \nu^{(a-1)/2}\rho\t\nu^{(a-1)/2-1}\rho\t\dots\t\nu^{(a_--1)/2+1}\rho\r
\sigma'
\end{equation*}
for some irreducible representation $\sigma'$. 
Further, (1) of Corollary \ref{Cor-bc} and Remark \ref{GL-inf} imply that 
$$
\pi\h \theta\r\sigma''
$$
for some irreducible subquotient $\theta$ of $\nu^{(a-1)/2}\rho\t\nu^{(a-1)/2-1}\rho\t\dots\t\nu^{(a_--1)/2+1}$ and some irreducible representation $\sigma''$.  
Denote
$$
k_i=(a-1)/2-i+1, \quad i=1,\dots , (a-a_-)/2.
$$
 Then 
\begin{equation}\label{Eq-index}
\theta\h \nu^{k_{\a(1)}}\rho \t\dots\times\nu^{k_{\alpha({(a-a_-)/2})}}\rho
\end{equation}
for some permutation $\a$ of $\{1,\dots,(a-a_-)/2\}$.
This and 
$\pi\h \theta\r\sigma''$ imply
$$
\pi\h \nu^{k_{\a(1)}}\rho\t\dots\times\nu^{k_{\alpha({(a-a_-)/2})}}\rho\r\sigma''.
$$
Let
$$
j=\max\{1\leq i\leq (a-a_-)/2+1;  \alpha(s)=s \text{ for }   s=1,\dots ,i-1\}.
$$
If $j=(a-a_-)/2+1$, then \eqref{Eq-index} implies $\theta\cong \delta([\nu^{(a_--1)/2+1}\rho, \nu^{(a-1)/2}\rho])$ and now $\pi\h \theta\r\sigma''$ implies
$$
\epsilon_{\pi}((\rho,a_-))\epsilon_{\pi}((\rho,a))^{-1}=1.
$$
Suppose 
$$
j\leq(a-a_-)/2.
$$
 Then $j<\a(j)$ and  in \eqref{Eq-index} we can bring $\nu^{k_{\a(j)}}\rho$ at the "beginning", i.e. $\theta\h \nu^{k_{\alpha(j)}}\rho\t\dots $. Now Frobenius reciprocity and (1) of Proposition \ref{Pr-jb-jac} imply
 $$
 2k_{\a(j)}+1\in\jrp.
 $$
  Note that
 $$
 2k_{\a(j)}+1=2((a-1)/2-{\a(j)}+1)+1=a-2\a(j)+2.
 $$
 From $-(a-a_-)/2\leq -\a(j)<-j$ we get 
 $$
a_-+2=a-(a-a_-)+2 \leq a-2\a(j)+2 (=2k_{\a(j)}+1)<a-2j-2\leq a.
 $$
 Thus $a_-< 2k_{\a(j)}+1<a$.
Since $2k_{\a(j)}+1\in\jrp$,  this contradicts to the assumption that $a$ and $a_-$ are the neighbors in $\jrp$.
\end{proof}

\begin{remark}\label{epsilon=}  We shall consider the situation as in previous proposition, i.e. $\pi$ is an irreducible square integrable representation of a
classical group $S_q$, $a\in\jrp$ which has $a_-\in \jrp$, such that
$$
\e_\pi((\rho,a))=\e_\pi((\rho,a_-)).
$$
Now  Lemma 5.1 in \cite{Moe-T} (of C. M\oe glin) implies
$$
\pi\h \delta([ \nu^{-(a_--1)/2} \rho, 
\nu^{(a-1)/2}\rho])\r \pi'
$$
for an irreducible square integrable representation $\pi'$. By Proposition \ref{Pr-jb-main}, we know $$
Jord(\pi')=\jrp\backslash \{(\rho,a),\rho,a_-)\}.
$$
 This implies that
$$
 \delta([ \nu^{-(a_--1)/2} \rho, 
\nu^{(a_--1)/2}\rho])\r \pi'
$$
reduces (into a sum of two nonequivalent irreducible tempered representations).

Suppose that $\theta\o\s$ is a subquotient of the Jacquet module of $\pi$ such that
$$
\chi_{\theta}=\chi_{\delta([\nu^{(a_--1)/2+1}\rho, \nu^{(a-1)/2}\rho])}.
$$
Then in the proof of the above proposition, we have shown that 
$$
\theta\cong   \delta([ \nu^{(a_--1)/2+1} \rho, 
\nu^{(a-1)/2}\rho]).
$$
From $\pi\h \delta([ \nu^{-(a_--1)/2} \rho, 
\nu^{(a-1)/2}\rho])\r \pi'$ follows 
$$
\pi\h \delta([ \nu^{(a_--1)/2+1} \rho, 
\nu^{(a-1)/2}\rho])
\t
\delta([ \nu^{-(a_--1)/2} \rho, 
\nu^{(a_--1)/2}\rho])\r \pi'
$$
and further
$$
\theta\o\s \leq
M^*( \delta([ \nu^{(a_--1)/2+1} \rho, 
\nu^{(a-1)/2}\rho]))
\t
M^*(\delta([ \nu^{-(a_--1)/2} \rho, 
\nu^{(a_--1)/2}\rho]))\r \mu^*(\pi').
$$
To get $\theta\o\s$ from the right hand side of the above inequality, one needs to take from $M^*( \delta([ \nu^{(a_--1)/2+1} \rho, 
\nu^{(a-1)/2}\rho]))
$ the term $\theta\o1$ (use the formula for $Jord(\pi')$). Therefore, we need to take from 
$$
M^*(\delta([ \nu^{-(a_--1)/2} \rho, 
\nu^{(a_--1)/2}\rho]))\r\mu^*(\pi')
$$
the term $1\o\s$. It can come only from 
$$
(1\o \delta([ \nu^{-(a_--1)/2} \rho, 
\nu^{(a_--1)/2}\rho]))\r(1\o\pi')
=
1\o \delta([ \nu^{-(a_--1)/2} \rho, 
\nu^{(a_--1)/2}\rho])\r\pi'.
$$
Therefore, $\s$ is equivalent to the precisely one irreducible subrepresentation of  
$$
\delta([ \nu^{-(a_--1)/2} \rho, 
\nu^{(a_--1)/2}\rho])\r \pi'.
$$
 This implies that $\theta\o\s$   has the multiplicity one in the Jacquet module of $\pi$.

\end{remark}

We consider again an irreducible square integrable representation $\pi$ of a
classical group $S_q$. Let $\rho$ be an irreducible $F'/F$-selfdual cuspidal representation of a general linear group such that $\jrp\cap 2\mathbb Z\ne \emptyset$. Then $\ep((\rho,a))$ is defined for $a\in\jrp$. Denote
$$
a=\min(\jrp).
$$
 Assume
$$
\e_\pi((\rho,a))=1.
$$
 This is equivalent to the fact that
$$
\pi \h \delta([\nu^{1/2}\rho, \nu^{(a-1)/2}\rho])\r\sigma
$$
for some irreducible representation $\sigma$. This implies that $\delta([\nu^{1/2}\rho, \nu^{(a-1)/2}\rho])\o\sigma$ is a quotient of the Jacquet module of $\pi$. In particular,
$$
\delta([\nu^{1/2}\rho, \nu^{(a-1)/2}\rho])\o\sigma\leq \mu^*(\pi).
$$
The following proposition tells that the converse holds:

\begin{proposition}\label{Pr-eps-ev} Let $\pi$ be an irreducible square integrable representation of a
classical group $S_q$. Suppose $\jrp\cap 2\mathbb Z\ne \emptyset$. Denote
$$
a=\min(\jrp).
$$
Suppose that
$$
\delta([\nu^{1/2}\rho, \nu^{(a-1)/2}\rho])\o\sigma\leq \mu^*(\pi)
$$
for some irreducible representation $\s$.  Then
$$
\e_\pi((\rho,a))=1.
$$

\end{proposition}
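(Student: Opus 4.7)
The plan is to mimic almost verbatim the proof of Proposition \ref{Pr-eps=}, with the segment $[\nu^{(a_--1)/2+1}\rho,\nu^{(a-1)/2}\rho]$ replaced by $[\nu^{1/2}\rho,\nu^{(a-1)/2}\rho]$ and the contradiction furnished by the minimality of $a$ rather than by $a_-$ being the predecessor of $a$ in $\jrp$.

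First I would unwind the hypothesis: transitivity of Jacquet modules applied to $\delta([\nu^{1/2}\rho,\nu^{(a-1)/2}\rho])\o\sigma\leq\mu^*(\pi)$ produces
$$
\nu^{(a-1)/2}\rho\o\nu^{(a-3)/2}\rho\o\dots\o\nu^{1/2}\rho\o\sigma \ \leq\ \mu^*(\pi).
$$
Iterated use of (3) of Corollary \ref{Cor-bc} then gives an irreducible $\sigma'$ with
$$
\pi\h \nu^{(a-1)/2}\rho\t\nu^{(a-3)/2}\rho\t\dots\t\nu^{1/2}\rho\r\sigma'.
$$
Applying (1) of Corollary \ref{Cor-bc} together with Remark \ref{GL-inf}, one obtains an embedding $\pi\h \theta\r\sigma''$, where $\theta$ is an irreducible subquotient of $\nu^{(a-1)/2}\rho\t\dots\t\nu^{1/2}\rho$. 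Writing $k_i=(a+1)/2-i$ for $i=1,\dots,a/2$, there is a permutation $\alpha$ of $\{1,\dots,a/2\}$ with $\theta\h \nu^{k_{\alpha(1)}}\rho\t\dots\t\nu^{k_{\alpha(a/2)}}\rho$.

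The crucial step is to force $\alpha$ to be the identity. Let $j$ be the smallest index with $\alpha(j)\ne j$; if such $j$ exists then $\alpha(j)>j\geq 1$, so $\alpha(j)\geq 2$. One can move $\nu^{k_{\alpha(j)}}\rho$ to the front (exactly as in the proof of Proposition \ref{Pr-eps=}), obtaining $\pi\h \nu^{k_{\alpha(j)}}\rho\t\dots\r\sigma''$. By Frobenius reciprocity and (1) of Proposition \ref{Pr-jb-jac} this yields $2k_{\alpha(j)}+1\in\jrp$. But
$$
2k_{\alpha(j)}+1=a-2\alpha(j)+2\leq a-2<a,
$$
while $2k_{\alpha(j)}+1\geq a-a+2=2>0$, contradicting $a=\min(\jrp)$.

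Hence $\alpha$ is the identity, so $\theta\cong \delta([\nu^{1/2}\rho,\nu^{(a-1)/2}\rho])$ and
$$
\pi\h \delta([\nu^{1/2}\rho,\nu^{(a-1)/2}\rho])\r\sigma'',
$$
which by the original definition of $\epsilon_\pi$ recalled in the theorem in the introduction means $\e_\pi((\rho,a))=1$. The only point that requires a little care is the half-integer indexing (forced by $a$ being even) and the verification that $\alpha(j)\geq 2$ is enough to push $2k_{\alpha(j)}+1$ strictly below $a$; apart from this bookkeeping the argument is a direct transcription of the proof of Proposition \ref{Pr-eps=}.
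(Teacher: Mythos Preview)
Your proposal is correct and follows essentially the same approach as the paper's own proof: both unwind the Jacquet module hypothesis via transitivity, embed $\pi$ using Corollary \ref{Cor-bc}, pass to an irreducible $\theta$ on the $GL$-side, and then use the permutation argument together with Proposition \ref{Pr-jb-jac} and the minimality of $a$ to force $\theta\cong\delta([\nu^{1/2}\rho,\nu^{(a-1)/2}\rho])$. The indexing $k_i=(a+1)/2-i$ you use coincides with the paper's $k_i=(a-1)/2-i+1$, and your bounds $2\leq 2k_{\alpha(j)}+1<a$ match the paper's verbatim.
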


\begin{proof} 
The condition in the proposition on the Jacquet module and the transitivity of Jacquet modules imply that
$$
\nu^{(a-1)/2}\rho\o\nu^{(a-1)/2-1}\rho\o\dots\o\nu^{1/2}\rho\o
\s
$$
is a subquotient in the Jacquet module of $\pi$. Now (3) of Corollary \ref{Cor-bc} implies
$$
\pi\h \nu^{(a-1)/2}\rho\t\nu^{(a-1)/2-1}\rho\t\dots\t\nu^{1/2}\rho\r
\sigma'
$$
for some irreducible representation  $\sigma'$.  Now (1) of Corollary \ref{Cor-bc} and Remark \ref{GL-inf} imply that
$$
\pi\h \theta\r\sigma''
$$
for some irreducible subquotient $\theta$ of $\nu^{(a-1)/2}\rho\t\nu^{(a-1)/2-1}\rho\t\dots\t\nu^{1/2}\rho$ and some irreducible  representation $\s''$.
Denote
$$
k_i=(a-1)/2-i+1, \quad i=1,\dots , a/2.
$$
 Then 
\begin{equation}\label{Eq-1/2}
\theta\h \nu^{k_{\a(1)}}\rho\t\dots\times\nu^{k_{\alpha({a/2})}}\rho
\end{equation}
for some permutation $\a$ of $\{1,\dots,a/2\}$.
This and $\pi\h \theta\r\s''$ imply
$$
\pi\h \nu^{k_{\a(1)}}\t\dots\times\nu^{k_{\alpha(a/2)}}\r\sigma''.
$$
Let
$$
j=\max\{1\leq i\leq a/2+1;  \alpha(s)=s \text{ for }   s=1,\dots ,i-1\}.
$$
If $j=a/2+1$, then\eqref{Eq-1/2} implies $\theta= \delta([\nu^{1/2}\rho, \nu^{(a-1)/2}\rho])$ and now $\pi\h \theta\r\s''$ implies
$$
\epsilon_{\pi}((\rho,a))=1.
$$
Suppose 
$$
j\leq a/2.
$$
 Then $j<\a(j)$ and  in \eqref{Eq-1/2} we can bring $\nu^{k_{\a(j)}}\rho$ at the "beginning" (as in the previous proof). By (1) of Proposition \ref{Pr-jb-jac} we know 
 $$
 2k_{\a(j)}+1\in\jrp.
 $$
  Note
 $$
 2k_{\a(j)}+1=2((a-1)/2-{\a(j)}+1)+1=a-2\a(j)+2.
 $$
 From $-a/2\leq -\a(j)<-j$ we get 
 $$
2=a-a+2 \leq a-2\a(j)+2 (=2k_{\a(j)}+1)<a-2j+2\leq a.
 $$
 This contradicts to the minimality of $a$ in $\jrp$.
\end{proof}

Let $\sigma$ be an irreducible cuspidal representation of a classical group and let $\rho$ be an irreducible $F'/F$-selfdual representation of a general linear group (over F'). Suppose that $\rho\r\s$ reduces. Write 
\begin{equation}
\label{pm}
\rho\r \s=\tau_1\oplus\tau_{-1}.
\end{equation}
 Then for any $k\in \mathbb Z_{>0}$ the representation $\delta([\nu\rho,\nu^k\rho])\r\tau_i$, $i\in\{\pm1\}$, has the unique irreducible subrepresentation. Denoted it by
$$
\delta([\nu\rho,\nu^k\rho]_{\tau_i};\sigma).
$$
For $k=0 $ we take formally $\delta(\emptyset_{\tau_i};\sigma)=\sigma$.

Take  an irreducible square integrable representation $\pi$ of a
classical group $S_q$ such that
$$
\pc=\s.
$$
Then $Jord_\rho(\pc)=\emptyset$.
Suppose $\jrp\ne \emptyset$. Then $\ep((\rho,a))$ is defined for $a\in \jrp$.
 Denote
$$
a=\max(\jrp).
$$
Suppose that
$$
\ep((\rho,a))=i.
$$
By \cite{T-inv}, this is equivalent to the fact that
$$
\pi\h\theta\r \delta([\nu\rho, \nu^{(a-1)/2}\rho]_{\tau_i};\pc)
$$
for some irreducible representation $\theta$ (of a general linear group). This implies that $\theta\o\delta([\nu\rho, \nu^{(a-1)/2}\rho]_{\tau_i};\pc)$ is a quotient of the Jacquet module of $\pi$. In particular.
$$
\theta\o\delta([\nu\rho, \nu^{(a-1)/2}\rho]_{\tau_i};\pc)\leq \mu^*(\pi).
$$
The following proposition implies that the converse holds:

\begin{proposition}\label{Pr-eps-0} Let $\pi$ be an irreducible square integrable representation of a
classical group $S_q$. Suppose $\jrp\ne \emptyset$ and $Jord_\rho(\pc)=\emptyset$. Denote
$$
a=\max(\jrp).
$$
Chose a decomposition into a sum of irreducible (tempered) subrepresentations:
$$
\rho\r \pc=\tau_1\oplus\tau_{-1}.
$$
Suppose that
$$
\theta\o\delta([\nu\rho, \nu^{(a-1)/2}\rho]_{\tau_i};\pc)
$$
in a subquotient of the corresponding Jacquet module of $\pi$
for some irreducible representation $\theta$.  Then
$$
\e_\pi((\rho,a))=i.
$$

\end{proposition}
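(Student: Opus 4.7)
The plan is to mirror the proofs of Propositions \ref{Pr-eps=} and \ref{Pr-eps-ev}: starting from the Jacquet-module subquotient assumption, convert it into an embedding $\pi \h \theta' \r \sigma_i$, where $\sigma_i := \d([\nu\rho, \nu^{(a-1)/2}\rho]_{\tau_i}; \pc)$. By \cite{T-inv}, this embedding is equivalent to $\ep((\rho,a)) = i$, so establishing the embedding proves the proposition.

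First I would apply (1) of Corollary \ref{Cor-bc} with $H$ taken to be the classical-group factor of the Levi (and $L$ the general linear factor). After reordering tensor factors, the hypothesis reads $\sigma_i \o \theta \leq r^G_M(\pi)$, and the corollary furnishes an irreducible representation $\sigma_i' \o \theta'$ with $\chi_{\sigma_i'} = \chi_{\sigma_i}$ such that $\pi \h \theta' \r \sigma_i'$. The remaining task is to show $\sigma_i' \cong \sigma_i$, after which an application of Lemma \ref{Le-e2} allows us to replace $\theta'$ by an irreducible representation giving $\pi \h \theta'' \r \sigma_i$.

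To identify $\sigma_i'$ with $\sigma_i$, I would exploit the rigidity of the Bernstein component containing $\sigma_i$ and $\sigma_{-i}$: both have $\pc$ as partial cuspidal support, both have Jordan blocks $\text{Jord}(\pc) \cup \{(\rho, a)\}$ (by Proposition \ref{Pr-jb-main} applied to the defining embeddings $\sigma_{\pm i} \h \d([\nu\rho,\nu^{(a-1)/2}\rho]) \r \tau_{\pm i}$), and they differ only in their $\epsilon$-invariant at $(\rho, a)$. A computation analogous to Remark \ref{epsilon=}, using the explicit formulae \eqref{Eq-jm} and \eqref{Eq-M-seg} for $\mu^*(\theta' \r \sigma_i')$, should rule out $\sigma_i' \cong \sigma_{-i}$: indeed, if $\sigma_i' \cong \sigma_{-i}$ then the hypothesized subquotient $\theta \o \sigma_i$ in $\mu^*(\pi)$ would force $\sigma_i$ to appear as the classical-group factor of some subquotient of $M^*(\theta') \r \mu^*(\sigma_{-i})$; matching partial cuspidal supports (both equal $\pc$) and extended cuspidal supports (both contain $\{\rho,\rho,\nu\rho,\dots,\nu^{(a-1)/2}\rho\}$) then forces any contributing factor $\beta$ from $M^*(\theta')$ on the GL side to be trivial and the classical-group factor of $\mu^*(\sigma_{-i})$ used to be $\sigma_{-i}$ itself, contradicting $\sigma_i \not\cong \sigma_{-i}$. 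The same rigidity rules out other irreducible representations in the Bernstein component, leaving $\sigma_i' \cong \sigma_i$.

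The main obstacle is precisely this rigidity argument. In Propositions \ref{Pr-eps=} and \ref{Pr-eps-ev} the corresponding classical-group factor of the subquotient was cuspidal ($\pc$ itself), allowing a direct clean use of (3) of Corollary \ref{Cor-bc} together with a permutation argument on $\nu^{k_i}\rho$ factors using Proposition \ref{Pr-jb-jac}. Here $\sigma_i$ is only square integrable, so I cannot appeal to cuspidality; the Bernstein-component analysis of $\mu^*(\sigma_{-i})$ replaces the cuspidal step. A supplementary permutation argument on the GL part of $\theta$ — exploiting $a = \max(\jrp)$ via Proposition \ref{Pr-jb-jac} to forbid $\nu^x\rho$ with $2x+1 > a$ from appearing in $\text{supp}(\theta)$ in the wrong position — will likely be needed to handle the interaction between $\text{supp}(\theta)$ and $[\rho,\nu^{(a-1)/2}\rho]$, but the decisive new ingredient is the Bernstein-component rigidity separating $\sigma_i$ from $\sigma_{-i}$.
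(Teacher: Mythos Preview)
Your approach diverges substantially from the paper's, which is a two-line argument by contradiction: assume $\e_\pi((\rho,a))=-i$ and cite a fact established at the end of the proof of Proposition~4.1 in \cite{T-inv}, namely that under this assumption no term of the form $\theta\otimes\sigma_i$ (with $\sigma_i=\delta([\nu\rho,\nu^{(a-1)/2}\rho]_{\tau_i};\pc)$) can occur as a subquotient of any Jacquet module of $\pi$. That immediately contradicts the hypothesis. All the work is outsourced to \cite{T-inv}.

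Your proposal tries to reprove this fact internally via Corollary~\ref{Cor-bc}(1) and a Bernstein-component rigidity argument, and this is where there is a genuine gap. The step ``matching partial cuspidal supports and extended cuspidal supports forces $\beta$ trivial and the classical-group factor of $\mu^*(\sigma_{-i})$ used to be $\sigma_{-i}$ itself'' is not justified. Concretely: if $\pi\hookrightarrow\theta'\rtimes\sigma_{-i}$, then $\theta\otimes\sigma_i\leq M^*(\theta')\rtimes\mu^*(\sigma_{-i})$ means $\sigma_i\leq\beta\rtimes\delta$ for some $\alpha\otimes\beta$ in $M^*(\theta')$ and $\gamma\otimes\delta$ in $\mu^*(\sigma_{-i})$. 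But the cuspidal support of $\theta'$ can certainly meet $[\rho,\nu^{(a-1)/2}\rho]$ (for instance if $\jrp$ has more than one element, or via the $\check{\ }$ in $M^*$), so nothing forces $\beta$ to be trivial; and for proper Jacquet pieces $\delta$ of $\sigma_{-i}$ (e.g.\ $\delta$ a subquotient involving $\tau_{-i}$ or $\pc$), the induced representation $\beta\rtimes\delta$ can be long and may well contain $\sigma_i$ unless one proves otherwise. Ruling this out is precisely the content of the argument in \cite{T-inv} that the paper cites; your sketch asserts it rather than proving it. Moreover, you have not addressed why $\sigma_i'$ could not be one of the non-tempered irreducibles sharing the infinitesimal character of $\sigma_i$ --- Corollary~\ref{Cor-bc}(1) does not give temperedness of the quotient.

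In short, the decisive exclusion of $\sigma_{-i}$ (and other constituents of the component) from the relevant Jacquet module is the whole proposition, and your rigidity sketch does not supply it. Either cite \cite{T-inv} as the paper does, or reproduce the Jacquet-module computation from the end of the proof of Proposition~4.1 there, which analyzes $\mu^*(\sigma_{-i})$ explicitly and shows that the $\tau_i$-piece never arises.
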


\begin{proof} 
Suppose that $ \theta\o\delta([\nu\rho, \nu^{(a-1)/2}\rho]_{\tau_i};\pc)$ is is a subquotient of a Jacquet module of $\pi$ and
suppose $\e_\pi((\rho,a))=-i$. At the end of proof of Proposition 4.1 of
\cite{T-inv} we have proved that then 
$$
\theta\o\delta([\nu\rho,
\nu^{(a-1)/2}\rho]_{\tau_{-(-i)}};\pc)=\theta\o\delta([\nu\rho,
\nu^{(a-1)/2}\rho]_{\tau_{i}};\pc)
$$
is not a subquotient of the Jacquet module of $\pi$. This contradiction
completes the proof of the lemma.
\end{proof}

\begin{remark} \label{Rm-ex} (1) Let $\rho$ be an $F'/F$-selfdual irreducible cuspidal representation of a general linear group and let $\s$ be an  irreducible cuspidal representation of a classical group such that both representations
$$
\rho\r\s \text{ and }\nu\rho\r\s
$$
are irreducible. Denote by $L((\rho,\nu\rho)$)  the unique irreducible quotient of $\nu\rho\t\rho$. Then from (6.1) of \cite{T-RPI} we  directly see
\begin{equation}\label{E-1}
L((\rho,\nu\rho))\o\s\leq \mu^*(\d([\rho,\nu\rho])\r\s)
\end{equation}
and
\begin{equation}\label{E-2}
\widetilde{L((\rho,\nu\rho))}\o\s\not\leq \mu^*(\d([\rho,\nu\rho])\r\s).
\end{equation}
From Proposition 6.3 of \cite{T-RPI} we know that the representation $\d([\rho,\nu\rho])\r\s$ is irreducible.

Suppose that we have an embedding 
\begin{equation}\label{E-3}
\d([\rho,\nu\rho])\r\s\h 
L((\rho,\nu\rho))\r\s.
\end{equation}
Then Corollary 6.4 of \cite{T-RPI} implies that we have also embedding
\begin{equation}\label{E-4}
\d([\rho,\nu\rho])\r\s\h 
\widetilde{L((\rho,\nu\rho))}\r\s.
\end{equation}
Now Frobenius reciprocity implies that $\widetilde{L((\rho,\nu\rho))}\o\s$ is a quotient of the corresponding Jacquet module of $\d([\rho,\nu\rho])\r\s$. In particular $\widetilde{L((\rho,\nu\rho))}\o\s\leq \mu^*(\d([\rho,\nu\rho])\r\s$). This contradicts \eqref{E-2}. Therefore, \eqref{E-3} can not hold.

The simplest example of the above situation is when we take $\rho=\mathbf 1_{F^\times}$ and $\sigma=\mathbf 1_{SO(1,F)}$ ($\mathbf 1_G$ denotes the trivial one-dimensional representation, while $St_G$ denotes the Steinberg representation of a reductive $p$-adic group $G$). Then $\nu^{1/2}St_{GL(2,F)}\r \mathbf 1_{SO(1,F)}$ is an irreducible representation of $SO(5,F)$. This representation has $\nu^{1/2}\mathbf 1_{GL(2,F)}\o \mathbf 1_{SO(1,F)}$ for a subquotient in the corresponding Jacquet module, but it does not embed into  $\nu^{1/2}\mathbf 1_{GL(2,F)}\r \mathbf 1_{SO(1,F)}$.

\bigskip

\noindent(2) Let $\rho$ be an irreducible
unitarizable cuspidal representation of  $GL(p,F)$ and let 
$\sigma$ be  an irreducible cuspidal representation of 
a classical group. Suppose that $\beta>1/2$ is in $(1/2)\,\mathbb Z$ and that  $\nu^\beta\rho
\rtimes \sigma$ reduces.
Then Proposition 5.1 of \cite{T-RPI} says that representations
$
\nu^{\beta} \rho  \rtimes \delta(\nu^\beta\rho,\sigma)
$
and
$
\nu^\beta\rho \rtimes L(\nu^\beta\rho,\sigma)
$
are irreducible. Compute
$$
\mu^*(\nu^{\beta} \rho  \rtimes \delta(\nu^\beta\rho,\sigma))=
(1\o\nu^{\beta} \rho +\nu^{\beta} \rho \o1+\nu\rho^{-\beta}\o1)\r(1\o \delta(\nu^\beta\rho,\sigma) + \nu^\beta\rho\o\sigma)
$$
$$
=1\o \nu^{\beta} \rho  \rtimes \delta(\nu^\beta\rho,\sigma)+[ \nu^{\beta} \rho  \o \delta(\nu^\beta\rho,\sigma)+\nu^{\beta} \rho  \o \nu^\beta\rho\r\sigma+\nu^{-\beta}\rho \o \delta(\nu^\beta\rho,\sigma)  ] 
$$
$$
+ [\nu^{\beta} \rho \t \nu^{\beta} \rho \o\s +\nu^{\beta} \rho \t\nu^{-\beta} \rho \o\s  ].
$$
Observe that $\nu^{\beta} \rho \o L(\nu^{\beta} \rho ;\s)\leq \mu^*(\nu^{\beta} \rho  \rtimes \delta(\nu^\beta\rho,\sigma))$ but   $\nu^{-\beta} \rho \o L(\nu^{\beta} \rho ;\s)\not\leq \mu^*(\nu^{\beta} \rho  \rtimes \delta(\nu^\beta\rho,\sigma))$. 

Suppose 
$$
 \nu^{\beta} \rho  \rtimes \delta(\nu^\beta\rho,\sigma)\h 
 \nu^{\beta} \rho \r L(\nu^{\beta} \rho ;\s). 
 $$
 Then $ \nu^{\beta} \rho  \rtimes \delta(\nu^\beta\rho,\sigma)\h 
 \nu^{-\beta} \rho \r L(\nu^{\beta} \rho ;\s)$, which implies that $\nu^{-\beta} \rho \o L(\nu^{\beta} \rho ;\s)$ is a subquotient of the corresponding Jacquet module of $\nu^{\beta} \rho  \rtimes \delta(\nu^\beta\rho,\sigma)$. This contradicts  $\nu^{-\beta} \rho \o L(\nu^{\beta} \rho ;\s)\not\leq \mu^*(\nu^{\beta} \rho  \rtimes \delta(\nu^\beta\rho,\sigma))$. Therefore $ \nu^{\beta} \rho  \rtimes \delta(\nu^\beta\rho,\sigma)$ does not embed into  
$ \nu^{\beta} \rho \r L(\nu^{\beta} \rho ;\s)$.

The simplest example of the above situation is when we take $\rho=\mathbf 1_{F^\times}$ and $\sigma=\mathbf 1_{Sp(0,F)}$. Then $\nu\mathbf 1_{F^\times}\r St_{Sp(2,F)}$ is an irreducible representation of $Sp(4,F)$. This representation has $\nu\mathbf 1_{F^\times}\o \mathbf 1_{Sp(2,F)}$ for a subquotient in its corresponding Jacquet module, but it does not embed into  $\nu\mathbf 1_{F^\times}\t \mathbf 1_{Sp(2,F)}$ .

\end{remark}

\section{Behavior of partially defined function for deforming  Jordan blocks}\label{behaviour}
\setcounter{equation}{0}

In Proposition \ref{Pr-jb-main},  square integrable representations of a smaller and bigger classical groups are related. There are two types of relation (see (1) and (2) of Proposition \ref{Pr-jb-main}). The proposition describes the behavior of the Jordan blocks in both cases. Note that the  partial cuspidal supports are preserved. Theorem \ref{Th-eps=} of C. M\oe glin describes the behavior of the partially defined function for the relation of type (2) of that proposition. In this section we shall describe the behavior of the partially defined function for the relation of type  (1) of that proposition.

\begin{lemma}\label{Le-red} Let $\pi$ be an irreducible square integrable representation of a
classical group $S_q$. Suppose that we have
$a\in
\jrp$, $a\geq 3$, which satisfies
$a-2\not\in
\jrp$. Then there exists an irreducible square integrable representation $\pi'$ such that
\begin{equation}\label{B1}
\pi\h \nu^{(a-1)/2}\rho\r \pi'.
\end{equation}
Let $\pi'$ be  any irreducible square integrable representation  satisfying \eqref{B1}.
Then:

 \begin{enumerate}

\item  $\pi$ is the unique irreducible subrepresentation of $ \nu^{(a-1)/2}\rho\r \pi'$.
\item
$$
\pi_{cusp}'= \pi_{cusp}.
$$
\item
$$ Jord(\pi')=\left(Jord(\pi)\ \backslash\ \{(\rho,a)\}\right)\cup
\{(\rho,a-2)\}.
$$

\item
Let 
$$
(\rho',b),(\rho',c)\in Jord(\pi)
$$
 (the possibility $b=c$ is not excluded). Observe that this implies $(\rho,a-2)\notin\{(\rho',b),(\rho',c)\}$.

Suppose $\rho'\not\cong\rho$, or $\rho'\cong\rho$ but $a\notin\{b,c\}$.  If $b\ne c$, then
\begin{equation}\label{Eq-B2}
\epsilon_{\pi'}((\rho',b))\epsilon_{\pi'}((\rho',c))^{-1}=
\epsilon_{\pi}((\rho',b))\epsilon_{\pi}((\rho',c))^{-1}.
\end{equation}
Further,
$\epsilon_{\pi'}((\rho',b))$ is defined if and only if
$\epsilon_\pi((\rho',b))$ is defined. If it is defined,  then 
\begin{equation}\label{Eq-B3}
\epsilon_{\pi'}((\rho',b))=\epsilon_{\pi}((\rho',b)).
\end{equation}

Suppose $\rho'\cong\rho$. If $b\ne a$, then
\begin{equation}\label{Eq-B4}
\epsilon_{\pi'}((\rho,b))\epsilon_{\pi'}((\rho,a-2))^{-1}=
\epsilon_{\pi}((\rho,b))\epsilon_{\pi}((\rho,a))^{-1}.
\end{equation}
Further,
$\epsilon_{\pi'}((\rho,a-2))$ is defined if and only if
$\epsilon_\pi((\rho,a))$ is defined. If it is defined,  then 
\begin{equation}\label{Eq-B5}
\epsilon_{\pi'}((\rho,a-2))=\epsilon_{\pi}((\rho,a)).
\end{equation}
\item If $\sigma$ is an irreducible  representation of a classical group such that
$$
\pi\h \nu^{(a-1)/2}\rho\r \sigma,
$$ 
then $\sigma\cong\pi'$. In particular, $\sigma$  is uniquely
determined by $\pi$ (and it is square integrable).

\end{enumerate}
\end{lemma}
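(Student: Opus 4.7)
The plan proceeds in three stages: establish existence of $\pi'$, derive the structural claims (1)--(3), and finally prove (4), which in turn yields (5).

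For existence, we appeal to M\oe glin's construction from \cite{Moe-Ex}. Since $a\in\text{Jord}_\rho(\pi)$ with $a-2\notin\text{Jord}_\rho(\pi)$, one can produce an irreducible square integrable representation $\pi'$ whose admissible triple is obtained from that of $\pi$ by replacing $(\rho,a)$ with $(\rho,a-2)$, keeping $\pi_{cusp}$, and restricting $\epsilon_\pi$ to the intersecting part of the domain. The hypothesis $a-2\notin\text{Jord}_\rho(\pi)$ ensures no duplication in the new multiset of Jordan blocks, so admissibility is preserved. Next, one shows $\nu^{(a-1)/2}\rho\otimes\pi'$ appears as a subquotient of $\mu^*(\pi)$ (comparing the admissible triples of $\pi$ and $\pi'$), and Corollary \ref{Cor-bc}(3), applied with the cuspidal representation $\nu^{(a-1)/2}\rho$, upgrades this subquotient statement to the embedding $\pi\hookrightarrow \nu^{(a-1)/2}\rho\rtimes\pi'$.

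Now fix any $\pi'$ satisfying this embedding. Claim (3) is immediate from Proposition \ref{Pr-jb-main} with $x=y=(a-1)/2>0$, giving $\text{Jord}(\pi')=(\text{Jord}(\pi)\setminus\{(\rho,a)\})\cup\{(\rho,a-2)\}$. Claim (2) follows by composing the embedding with an embedding $\pi'\hookrightarrow \theta'\rtimes \pi'_{cusp}$ and invoking uniqueness of the partial cuspidal support. Claim (1) then follows by applying Lemma \ref{Le-sr} to $\pi'$, which is valid by (3) since $(\rho,a-2)\in\text{Jord}_\rho(\pi')$ and $(\rho,a)\notin\text{Jord}_\rho(\pi')$: the representation $\nu^{(a-1)/2}\rho\rtimes\pi'$ has a unique irreducible subrepresentation, forced to be $\pi$. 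Claim (5) will follow once (4) is in hand, since then any irreducible $\sigma$ with $\pi\hookrightarrow \nu^{(a-1)/2}\rho\rtimes\sigma$ shares its entire admissible triple with $\pi'$, forcing $\sigma\cong\pi'$ by \cite{Moe-T}.

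The core content is (4), which we propose to establish using the Jacquet module characterizations of $\epsilon_\pi$ developed in Propositions \ref{Pr-eps=}, \ref{Pr-eps-ev}, and \ref{Pr-eps-0}. From $\pi\hookrightarrow \nu^{(a-1)/2}\rho\rtimes\pi'$ and formula \eqref{Eq-jm}, every subquotient of $\mu^*(\pi)$ appears in $M^*(\nu^{(a-1)/2}\rho)\rtimes \mu^*(\pi')$. For Jordan blocks untouched by the deformation (yielding \eqref{Eq-B2}, \eqref{Eq-B3}, and \eqref{Eq-B4}), the segments characterizing the relevant $\epsilon$-value involve cuspidal data whose support is disjoint from $\{\nu^{(a-1)/2}\rho\}$ in the sense required by Proposition \ref{Pr-eps=}, so the characterizing Jacquet module term transfers cleanly between $\mu^*(\pi)$ and $\mu^*(\pi')$. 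The delicate case is \eqref{Eq-B5}: using Proposition \ref{Pr-eps-ev} (for even $a$) or Proposition \ref{Pr-eps-0} (for odd $a$ with $\text{Jord}_\rho(\pi_{cusp})=\emptyset$), the characterizing subquotient of $\mu^*(\pi)$ involving the segment ending at $\nu^{(a-1)/2}\rho$ must be shown to correspond to the analogous subquotient of $\mu^*(\pi')$ with segment ending at $\nu^{(a-3)/2}\rho$, the extra factor $\nu^{(a-1)/2}\rho$ being absorbed by $M^*(\nu^{(a-1)/2}\rho)$.

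The main obstacle is precisely this combinatorial transfer underlying \eqref{Eq-B5}: one must identify which terms in $M^*(\nu^{(a-1)/2}\rho)\rtimes\mu^*(\pi')$ can actually contribute to the characterizing subquotient of $\mu^*(\pi)$, using Proposition \ref{Pr-jb-jac} (which forces any $\nu^x\rho$ occurring in a Jacquet module term of $\pi$ to satisfy $2x+1\in\text{Jord}_\rho(\pi)$) together with the updated Jordan block structure given by (3) to eliminate all but the expected contribution, and to verify no spurious sign twist is introduced by the absorption of $\nu^{(a-1)/2}\rho$.
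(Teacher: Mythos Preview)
Your approach to (1)--(3) matches the paper's. The real issue is your derivation of (5) from (4). Part (4) is stated only for square integrable $\pi'$, and admissible triples are only attached to square integrable representations; but in (5) the representation $\sigma$ is an arbitrary irreducible representation, and you have not explained why $\sigma$ must be square integrable before you invoke (4) and the classification in \cite{Moe-T}. The paper avoids this circularity by proving (5) \emph{before} (4), via a direct Jacquet module computation: from $\pi\hookrightarrow\nu^{(a-1)/2}\rho\rtimes\sigma$ one gets $\nu^{(a-1)/2}\rho\otimes\sigma\leq\mu^*(\pi)\leq\mu^*(\nu^{(a-1)/2}\rho\rtimes\pi')$, and the formula for the right-hand side together with (3) and Proposition \ref{Pr-jb-jac} forces $\sigma\cong\pi'$, with no square-integrability hypothesis on $\sigma$ required.

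For (4), your sketch points at the right ingredients but is one-directional. You describe how subquotients of $\mu^*(\pi)$ sit inside $M^*(\nu^{(a-1)/2}\rho)\rtimes\mu^*(\pi')$, which handles the implications $\epsilon_\pi=1\Rightarrow\epsilon_{\pi'}=1$; but each claim in (4) is a biconditional, and the converse direction (from $\epsilon_{\pi'}$ to $\epsilon_\pi$) is not covered by the inequality $\mu^*(\pi)\leq M^*(\nu^{(a-1)/2}\rho)\rtimes\mu^*(\pi')$. The paper handles the converse in each case by starting from the embedding characterizing $\epsilon_{\pi'}$, composing with $\pi\hookrightarrow\nu^{(a-1)/2}\rho\rtimes\pi'$, and then either commuting factors (when the relevant segments are unlinked) or passing to a Jacquet module subquotient of $\pi$ and applying Propositions \ref{Pr-eps=}, \ref{Pr-eps-ev}, \ref{Pr-eps-0}. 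The paper organizes this into seven cases (A)--(G), distinguishing whether the Jordan block in question is the minimum or maximum of $\text{Jord}_{\rho'}(\pi')$, whether it equals $(\rho,a-2)$ or its neighbor, and so on; your phrase ``transfers cleanly'' hides nontrivial work in each of these cases, including ruling out spurious contributions via square-integrability (Casselman's criterion) and via $(\rho,a)\notin\text{Jord}(\pi')$.
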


Note that  (4) tells us that  $\epsilon_{\pi'}$ is  completely determined by $\ep$.

\begin{proof} The existence of an irreducible square integrable representation $\pi'$ satisfying
\eqref{B1}  is just Lemma in 10.2.2 of \cite{Moe-T} (see there also Remark   after that
lemma). It also implies  that  claim (3) holds. Now Lemma 5.3 of \cite{Moe-T} applied to $\pi'$ gives (1). Claim (2) follows directly from the
definition of the cuspidal support: $\pi\h\nu^{(a-1)/2}\rho\t \theta\r\pi'_{cusp}$ for
some irreducible representation $\theta$ of a general linear group.
It remains to prove (4) and (5).

First we shall prove (5). Suppose $\pi\h \nu^{(a-1)/2}\rho\r \sigma$. This and \eqref{B1}
imply
\begin{equation}
\label{Eq-B55}
 \nu^{(a-1)/2}\rho\o \sigma\leq \mu^*(\pi) \leq \mu^*( \nu^{(a-1)/2}\rho\r \pi').
\end{equation}
We have
\begin{equation}
\label{Eq-B555}
\mu^*( \nu^{(a-1)/2}\rho\r \pi')=(1\o\nu^{(a-1)/2}\rho+ \nu^{(a-1)/2}\rho \o1
+ \nu^{-(a-1)/2}\rho\o1)
\r \mu^*(\pi').
\end{equation}
Now \eqref{Eq-B55} and the above formula imply
\begin{equation}\label{Eq-B6}
 \nu^{(a-1)/2}\rho\o \sigma
 \leq 
 (1\o\nu^{(a-1)/2}\rho)
\r \mu^*(\pi')
+
 (\nu^{(a-1)/2}\rho\o1)
\r \mu^*(\pi').
\end{equation}
Suppose that
\begin{equation}\label{Eq-B66}
 \nu^{(a-1)/2}\rho\o \sigma
 \leq 
 (\nu^{(a-1)/2}\rho\o1)
\r \mu^*(\pi').
\end{equation}
Then the formula for $\mu^*(\pi')$ implies $\sigma\cong\pi'$, so (5) holds. Suppose that
\eqref{Eq-B66} does not hold. In that case 
$$
\nu^{(a-1)/2}\rho\o \sigma\leq (1\o \nu^{(a-1)/2}\rho )
\r \mu^*(\pi').
$$
This implies that that there exits an irreducible representation $\tau\otimes\tau'\leq s_{(p)}(\sigma)$ such
that 
$$
 \nu^{(a-1)/2}\rho\o \sigma\leq\tau\o \nu^{(a-1)/2}\rho\r\tau'.
$$
This implies $\tau\cong \nu^{(a-1)/2}\rho$. Now (1) of Proposition \ref{Pr-jb-jac}  implies that
($\rho,a)\in Jord(\pi')$, which contradicts (3). This completes the proof of (5).

Now we shall prove (4). 
The proof proceeds in a number of steps.

\medskip

\noindent
{\bf (A)} Suppose 
$$
b=\min(Jord_{\rho'}(\pi')).
$$
Recall $(\rho',b)\ne (\rho,a-2)$ (since $(\rho',b)\in \jp$ and $(\rho,a-2)\notin \jp$).
Therefore
$$
b=\min(Jord_{\rho'}(\pi)).
$$
Clearly, $(\rho',b)\ne(\rho,a)$.
 Assume additionally 
 $$
 b\in2\mathbb Z.
 $$
\begin{enumerate}
\item
Let
$
\e_{\pi'}((\rho',b))=1.
$
 Then
$$
\pi'\h \d([\nu^{1/2}\rho',\nu^{(b-1)/2}\rho'])\r\sigma
$$
for some irreducible representation $\sigma.$ Now
$$
\pi\h\nu^{(a-1)/2}\rho\t \pi'\h
\nu^{(a-1)/2}\rho\t\d([\nu^{1/2}\rho',\nu^{(b-1)/2}\rho'])\r\sigma.
$$
The condition $(\rho',b)\ne(\rho,a-2)$ implies
$$
\pi\h
\d([\nu^{1/2}\rho',\nu^{(b-1)/2}\rho'])\t\nu^{(a-1)/2}\rho\r\sigma.
$$
 Thus
$
\e_{\pi}((\rho',b))=1.
$

\item
Let now
$
\e_{\pi}((\rho',b))=1.
$
 Then
$$
\pi\h \d([\nu^{1/2}\rho',\nu^{(b-1)/2}\rho'])\r\sigma
$$
for some irreducible representation $\sigma.$ This implies
$$
\d([\nu^{1/2}\rho',\nu^{(b-1)/2}\rho'])\o\sigma
\leq\mu^*(\nu^{(a-1)/2}\rho\r\pi')
\hskip40mm
$$
$$
\hskip30mm
=(1\o\nu^{(a-1)/2}\rho+ \nu^{(a-1)/2}\rho \o1
+ \nu^{-(a-1)/2}\rho\o1)
\r \mu^*(\pi').
$$
Now the above formula and $(\rho',b)\ne(\rho,a)$, $(\rho',b)\ne(\rho,a-2)$  directly imply that
$$
\d([\nu^{1/2}\rho',\nu^{(b-1)/2}\rho'])\o\sigma
\leq
(1\o\nu^{(a-1)/2}\rho)\r
\mu^*(\pi').
$$
From this follows that $\d([\nu^{1/2}\rho',\nu^{(b-1)/2}\rho'])\o\s'\leq \varphi\o\nu^{(a-1)/2}\rho\r\psi$ for some irreducible subquotient  $\varphi\o\psi\leq \mu^*(\pi')$. Clearly, $\d([\nu^{1/2}\rho',\nu^{(b-1)/2}\rho'])\cong \varphi$.
Now  from Proposition \ref{Pr-eps-ev}  we get
$
\e_{\pi'}((\rho',b))=1.
$
\end{enumerate}

\noindent
{\bf (B)} Suppose 
$$
a-2=\min(Jord_{\rho}(\pi')).
$$
Then also
$a=\min(Jord_{\rho}(\pi))$ (and conversely).  Assume additionally 
$$
a\in 2\mathbb Z.
$$

\begin{enumerate}
\item
Suppose
$
\e_{\pi'}((\rho,a-2))=1.
$
 Then
$$
\pi'\h \d([\nu^{1/2}\rho,\nu^{(a-3)/2}\rho])\r\sigma
$$
for some irreducible representation $\sigma.$ Now
$$
\pi\h\nu^{(a-1)/2}\rho\t \pi'\h
\nu^{(a-1)/2}\rho\t\d([\nu^{1/2}\rho,\nu^{(a-3)/2}\rho])\r\sigma.
$$
Now $\nu^{(a-1)/2}\rho\o\d([\nu^{1/2}\rho,\nu^{(a-3)/2}\rho])\o\sigma$ is
in the Jacquet module of $\pi$. Transitivity of Jacquet modules implies that
 also $\d([\nu^{1/2}\rho,\nu^{(a-1)/2}\rho])\o\sigma$ must be a subquotient of the Jacquet module of $\pi$. Now Proposition \ref{Pr-eps-ev} 
implies
$
\e_{\pi}((\rho,a))=1.
$

\item
Let now
$
\e_{\pi}((\rho,a))=1.
$
 Then
$$
\pi\h \d([\nu^{1/2}\rho,\nu^{(a-1)/2}\rho])\r\sigma
$$
for some irreducible representation $\sigma.$ This implies
$$
\d([\nu^{1/2}\rho,\nu^{(a-1)/2}\rho])\o\sigma
\leq\mu^*(\nu^{(a-1)/2}\rho\r\pi')
\hskip40mm
$$
$$
\hskip30mm
=(1\o\nu^{(a-1)/2}\rho+ \nu^{(a-1)/2}\rho \o1
+ \nu^{-(a-1)/2}\rho\o1)
\r \mu^*(\pi').
$$
From this  follows that 
\begin{equation}
\label{Eq-add-2}
\d([\nu^{1/2}\rho,\nu^{(a-3)/2}\rho])\o\sigma'
\leq\mu^*(\pi')
\end{equation}
or 
\begin{equation*}
\d([\nu^{1/2}\rho,\nu^{(a-1)/2}\rho])\o\sigma''
\leq\mu^*(\pi')
\end{equation*}
for some irreducible representations $\sigma'$ and $\sigma''$. The last inequality implies $(\rho,a)\in Jord(\pi')$, which is impossible. Therefore, \eqref{Eq-add-2} holds.
Now Proposition \ref{Pr-eps-ev} implies
$
\e_{\pi'}((\rho,a-2))=1.
$
\end{enumerate}

\noindent
{\bf (C)}
Suppose  
$$
\text{$\rho'\not\cong\rho$, or $\rho'\cong\rho$
but
$a\notin\{b,c\}$.  }
$$
Let $b\ne c$. Consider the case 
$
b=c_-.
$

\begin{enumerate}
\item
First suppose that
$
\epsilon_{\pi'}((\rho',c_-))\epsilon_{\pi'}((\rho',c))^{-1}=1.
$
Then by Remark 5.1.3 of \cite{Moe-Ex} (or Lemma 5.1 of \cite{Moe-T}) we have
$$
\pi'\h \delta([\nu^{-(c_--1)/2}\rho', \nu^{(c-1)/2}\rho'])\r\pi''
$$
for some irreducible square integrable representation $\pi''$.
Now
$$
\pi \h \nu^{(a-1)/2}\rho\r\pi'\h \nu^{(a-1)/2}\rho\t\delta([\nu^{-(c_--1)/2}\rho',
\nu^{(c-1)/2}\rho'])\r\pi''.
$$
Suppose that the segments $\{\nu^{(a-1)/2}\rho\}$ and
$\delta([\nu^{-(c_--1)/2}\rho',
\nu^{(c-1)/2}\rho'])$ are  linked. In this case $\rho'\cong\rho$ and we have two possibilities.
Then first is $(c-1)/2+1=(a-1)/2$, i.e. $c=a-2$, which is not possible since $c\in Jord_\rho(\pi)$
and
$a-2\notin Jord_\rho(\pi)$. The second possibility is ($a-1)/2+1=-(c_--1)/2$. This is obviously
impossible.
Therefore, the segments $\{\nu^{(a-1)/2}\rho\}$ and $\delta([\nu^{-(c_--1)/2}\rho',
\nu^{(c-1)/2}\rho'])$ are not linked. This implies
$$
\hskip12mm
\nu^{(a-1)/2}\rho\t\delta([\nu^{-(c_--1)/2}\rho',
\nu^{(c-1)/2}\rho'])
\cong
\delta([\nu^{-(c_--1)/2}\rho',
\nu^{(c-1)/2}\rho'])\t \nu^{(a-1)/2}\rho.
$$
Therefore
$$
\pi
\h
\delta([\nu^{-(c_--1)/2}\rho',
\nu^{(c-1)/2}\rho'])\t \nu^{(a-1)/2}\rho\r\pi''
\h
\hskip
35mm
$$
$$
\hskip27mm
\delta([\nu^{(c_-+1)/2}\rho',
\nu^{(c-1)/2}\rho'])
\t
\delta([\nu^{-(c_-1)/2}\rho',
\nu^{(c_--1)/2}\rho'])\t \nu^{(a-1)/2}\rho\r\pi''.
$$
Now  the  definition of the partially defined function implies
$
\epsilon_{\pi}((\rho',c_-))\epsilon_{\pi}((\rho,c))^{-1}$ $=1.
$
Therefore in this case we have
$
\epsilon_{\pi'}((\rho',c_-))\epsilon_{\pi'}((\rho',c))^{-1}=
\epsilon_{\pi}((\rho',c_-))\epsilon_{\pi}((\rho',c))^{-1}.
$

\item
Suppose  now 
$
\epsilon_{\pi}((\rho',c_-))\epsilon_{\pi}((\rho',c))^{-1}=1.
$
Then
\begin{equation}\label{Eq-B7}
\pi\h \delta([\nu^{-(c_--1)/2}\rho', \nu^{(c-1)/2}\rho'])\r\pi''
\end{equation}
for some irreducible square integrable representation $\pi''$. 
Now \eqref{Eq-B7} and
$
\pi \h \nu^{(a-1)/2}\rho\r\pi'
$
imply
$$
\delta([\nu^{-(c_--1)/2}\rho', \nu^{(c-1)/2}\rho'])\o\pi''
\leq
\mu^*(\nu^{(a-1)/2}\rho\r\pi') \hskip40mm
$$
$$
\hskip30mm= (1\o\nu^{(a-1)/2}\rho+ \nu^{(a-1)/2}\rho \o1
+\nu^{-(a-1)/2}\rho\o1)
\r \mu^*(\pi').
$$
Suppose
$$
\delta([\nu^{-(c_--1)/2}\rho', \nu^{(c-1)/2}\rho'])\o\pi''
\leq
 (\nu^{\pm(a-1)/2}\rho\o1)
\r \mu^*(\pi').
$$
Then $\rho'\cong\rho$ and $a\leq c$, which implies $a<c$ since we consider the case $a\ne c$.
Therefore, $a<c_-$ also (since $a\ne b$). In this case we  have two possibilities (corresponding to the choice of sign in the term $\nu^{\pm(a-1)/2}\rho\o1$). The first possibility implies
$$
\delta([\nu^{-(c_--1)/2}\rho', \nu^{(a-1)/2-1}\rho']) \t \delta([\nu^{(a-1)/2+1}\rho',
\nu^{(c-1)/2}\rho'])\o \sigma\leq \mu^*(\pi')
$$
for some irreducible representation $\sigma$. This would imply that $\pi'$ is not square integrable.
The second possibility implies in the same way that $\pi'$ is not square integrable.

Thus
$$
\delta([\nu^{-(c_--1)/2}\rho', \nu^{(c-1)/2}\rho'])\o\pi''
\leq
(1\o \nu^{(a-1)/2}\rho 
)
\r \mu^*(\pi').
$$
This implies  that
$$
\delta([\nu^{-(c_--1)/2}\rho', \nu^{(c-1)/2}\rho'])\o\s'\leq \mu^*(\pi')
$$
for some irreducible representation $\s'$. Now Proposition \ref{Pr-jb-jac} easily implies that
$
\epsilon_{\pi'}((\rho',c_-))\epsilon_{\pi'}((\rho,c))^{-1}=1.
$
Thus 
 $
\epsilon_{\pi'}((\rho',c_-))\epsilon_{\pi'}((\rho',c))^{-1}=
\epsilon_{\pi}((\rho',c_-))\epsilon_{\pi}((\rho',c))^{-1}
$
holds also in this case.

\end{enumerate}

\noindent
{\bf (D)} Suppose that  $a_-$ is defined. Then $(a-2)_-$ is defined, and
conversely. In that case 
$$
(a-2)_-=a_-.
$$

\begin{enumerate}
\item
Suppose
$
\epsilon_{\pi'}((\rho,a_-))\epsilon_{\pi'}((\rho,a-2))^{-1}=1.
$
Then
$$
\pi'\h \delta([\nu^{-(a_--1)/2}\rho, \nu^{(a-3)/2}\rho])\r\pi''
$$
for some irreducible square integrable representation $\pi''$.
Now
$$
\pi \h \nu^{(a-1)/2}\rho\r\pi'\h
\nu^{(a-1)/2}\rho\t\delta([\nu^{-(a_--1)/2}\rho,
\nu^{(a-3)/2}\rho])\r\pi''.
$$
In a standard way we get that $\delta([\nu^{(a_--1)/2+1}\rho,
\nu^{(a-1)/2}\rho])\o\pi''$ is a subquotient of the Jacquet module of $\pi$, and now (1) of Proposition \ref{Pr-eps=}
implies
$
\epsilon_{\pi}((\rho,a_-))\epsilon_{\pi}((\rho,a))^{-1}=1.
$
Therefore in this case holds
$
\epsilon_{\pi'}((\rho,a_-))\epsilon_{\pi'}((\rho,a-2))^{-1}=
\epsilon_{\pi}((\rho,a_-))\epsilon_{\pi}((\rho,a))^{-1}.
$

\item
 Suppose  now 
$
\epsilon_{\pi}((\rho,a_-))\epsilon_{\pi}((\rho,a))^{-1}=1.
$
Then
\begin{equation}\label{Eq-B77}
\pi\h \delta([\nu^{-(a_--1)/2}\rho, \nu^{(a-1)/2}\rho])\r\pi''
\end{equation}
for some irreducible square integrable representation $\pi''$. 
Now \eqref{Eq-B77} and
$
\pi \h \nu^{(a-1)/2}\rho\r\pi'
$
imply
$$
\delta([\nu^{-(a_--1)/2}\rho, \nu^{(a-1)/2}\rho])\o\pi''
\leq
\mu^*(\nu^{(a-1)/2}\rho\r\pi') \hskip40mm
$$
$$
\hskip30mm= (1\o\nu^{(a-1)/2}\rho+ \nu^{(a-1)/2}\rho \o1
+\nu^{-(a-1)/2}\rho\o1)
\r \mu^*(\pi').
$$
Suppose
$$
\delta([\nu^{-(a_--1)/2}\rho, \nu^{(a-1)/2}\rho])\o\pi''
\leq
 (\nu^{\pm(a-1)/2}\rho\o1)
\r \mu^*(\pi').
$$
 In this case we must have
$$
\delta([\nu^{-(a_--1)/2}\rho, \nu^{(a-1)/2-1}\rho]) 
\o \sigma\leq \mu^*(\pi')
$$
for some irreducible representation $\sigma$.  Now  Proposition \ref{Pr-eps=} implies that we have
$\epsilon_{\pi'}((\rho,a_-))\epsilon_{\pi'}((\rho,a-2))^{-1}=1$. It remains to consider the case
$$
\delta([\nu^{-(a_--1)/2}\rho, \nu^{(a-1)/2}\rho])\o\pi''
\leq
(1\o \nu^{(a-1)/2}\rho 
)
\r \mu^*(\pi').
$$
Now one directly gets that
$$
\delta([\nu^{-(a_--1)/2}\rho, \nu^{(a-1)/2}\rho])\o\s'\leq \mu^*(\pi')
$$
for some irreducible representation $\s'$. From (1) of Proposition \ref{Pr-jb-jac} now follows
$a\in Jord_\rho(\pi')$, which is a contradiction.

Therefore, we have proved that also in this case we have 
$
\epsilon_{\pi'}((\rho,a-2))\epsilon_{\pi'}((\rho,a_-))^{-1}$
$
=
\epsilon_{\pi}((\rho,a))\epsilon_{\pi}((\rho,a_-))^{-1}.
$
\end{enumerate}

\noindent
{\bf (E)} Suppose that 
$$
\text{$\rho'\cong\rho$ and $b_-=a$ is defined in $\jrp$.}
$$
Then
$b_-$ is $a-2$ in $Jord_\rho(\pi')$ (the converse also holds). We denote 
$$
a_+=b.
$$

\begin{enumerate}
 \item
Suppose
$
\epsilon_{\pi'}((\rho,a_+))\epsilon_{\pi'}((\rho,a-2))^{-1}=1.
$
Then
$$
\pi'\h \delta([\nu^{-(a-3)/2}\rho, \nu^{(a_+-1)/2}\rho])\r\pi''
$$
for some irreducible square integrable representation $\pi''$. 
Now
$$
\pi \h \nu^{(a-1)/2}\rho\r\pi'\h
\nu^{(a-1)/2}\rho\t\delta([\nu^{-(a-3)/2}\rho,
\nu^{(a_+-1)/2}\rho])\r\pi''.
$$
In standard way we get that $\delta([\nu^{(a-1)/2+1}\rho,
\nu^{(a_+-1)/2}\rho])\o\pi''$ is a subquotient of the Jacquet module of $\pi$. Now Proposition \ref{Pr-eps=}
implies
$
\epsilon_{\pi}((\rho,a_+))\epsilon_{\pi}((\rho,a))^{-1}=1.
$
Therefore  we have 
$
\epsilon_{\pi'}((\rho,a_+))\epsilon_{\pi'}((\rho,a-2))^{-1}=
\epsilon_{\pi}((\rho,a_+))\epsilon_{\pi}((\rho,a))^{-1}.
$

\item
Suppose  now 
$
\epsilon_{\pi}((\rho,a_+))\epsilon_{\pi}((\rho,a))^{-1}=1.
$
Then
\begin{equation}\label{Eq-B77+}
\pi\h \delta([\nu^{-(a-1)/2}\rho, \nu^{(a_+-1)/2}\rho])\r\pi''
\end{equation}
for some irreducible square integrable representation $\pi''$. 
Now \eqref{Eq-B77+} and
$
\pi \h \nu^{(a-1)/2}\rho\r\pi'
$
imply
$$
\delta([\nu^{-(a-1)/2}\rho, \nu^{(a_+-1)/2}\rho])\o\pi''
\leq
\mu^*(\nu^{(a-1)/2}\rho\r\pi') \hskip40mm
$$
$$
\hskip30mm= (1\o\nu^{(a-1)/2}\rho+ \nu^{(a-1)/2}\rho \o1
+\nu^{-(a-1)/2}\rho\o1)
\r \mu^*(\pi').
$$
Suppose
$$
\delta([\nu^{-(a-1)/2}\rho, \nu^{(a_+-1)/2}\rho])\o\pi''
\leq
 (\nu^{\pm(a-1)/2}\rho\o1)
\r \mu^*(\pi').
$$
 In this case we must have
$$
\delta([\nu^{-(a-1)/2}\rho, \nu^{(a-1)/2-1}\rho]) \t
\delta([\nu^{(a-1)/2+1}\rho,
\nu^{(a_+-1)/2}\rho])\o \sigma\leq \mu^*(\pi')
$$
or
$$
\delta([\nu^{-(a-1)/2+1}\rho, \nu^{(a_+-1)/2}\rho])
\o \sigma'
\leq \mu^*(\pi')
$$
for some irreducible representations $\sigma$. The first possibility would imply that $\pi'$ is not square
integrable, which is contradiction. The second possibility and Proposition \ref{Pr-eps=} easily imply that
$
\epsilon_{\pi'}((\rho,a-2))\epsilon_{\pi'}((\rho,a_+))^{-1}=1.
$

It remains to consider the case 
$$
\delta([\nu^{-(a-1)/2}\rho, \nu^{(a_+-1)/2}\rho])\o\pi''
\leq
(1\o \nu^{(a-1)/2}\rho 
)
\r \mu^*(\pi').
$$
Now one directly gets that
$$
\delta([\nu^{(a-1)/2+1}\rho, \nu^{(a_+-1)/2}\rho])\o\s''\leq\mu^*(\pi')
$$
for some irreducible representation $\s''$. Proposition \ref{Pr-eps=} now implies
$
\epsilon_{\pi'}((\rho,a-2))\epsilon_{\pi'}((\rho,a_+))^{-1}=1.
$
Therefore in this case  also holds
$
\epsilon_{\pi'}((\rho,a-2))\epsilon_{\pi'}((\rho,a_+))^{-1}=
\epsilon_{\pi}((\rho,a))\epsilon_{\pi}((\rho,a_+))^{-1}.
$
\end{enumerate}

\noindent
{\bf (F)} Suppose 
$$
b=\max(Jord_{\rho'}(\pi')).
$$
 Then
 $(\rho',b)\ne(\rho,a)$ since $(\rho,a)\notin Jord(\pi')$, and therefore
$b=\max(Jord_{\rho'}(\pi))$ (and conversely). 
Suppose additionally
$$
Jord_{\rho'}(\pc)=\emptyset.
$$
Write
\begin{equation}
\label{Eq-B777}
\rho'\r\pc=\tau_1'\oplus\tau_{-1}'.
\end{equation}
Let
$$
\e_{\pi'}((\rho',b))=i.
$$
 Then
$$
\pi'\h \theta \r
\d([\nu\rho',\nu^{(b-1)/2}\rho']_{\tau_i'};\pc)
$$
for some irreducible representation $\theta.$ Now
$$
\pi\h\nu^{(a-1)/2}\rho\r \pi'\h
\nu^{(a-1)/2}\rho\t\theta \r
\d([\nu\rho',\nu^{(b-1)/2}\rho']_{\tau_i'};\pc).
$$
Then $\e_{\pi}((\rho',b))=i$, and
therefore
$
\e_{\pi'}((\rho',b))=\e_{\pi}((\rho',b)).
$

\noindent
{\bf (G)} Suppose 
$$
a-2=\max(Jord_{\rho}(\pi')).
$$
 Then 
$a=\max(Jord_{\rho}(\pi))$ (and conversely). Suppose
$$
Jord_\rho(\pc)=\emptyset
$$
 and assume the decomposition    $\rho'\r\pc=\tau_1'\oplus\tau_{-1}'$ from \eqref{Eq-B777} to hold.

Let 
$$
\e_{\pi}((\rho,a))=i.
$$
 Then
$$
\pi\h \theta\r
\d([\nu\rho,\nu^{(a-1)/2}\rho]_{\tau_i};\pc)
$$
for some irreducible representation $\theta.$
 This implies
\begin{equation}
\label{Eq-B7777} 
\theta\o\d([\nu\rho,\nu^{(a-1)/2}\rho]_{\tau_i},\pc)
\leq\mu^*(\pi')
\end{equation}
for some (irreducible) representation  $\theta$. This implies
$$
\theta\o\d([\nu\rho,\nu^{(a-1)/2}\rho]_{\tau_i};\pc)
\leq\mu^*(\nu^{(a-1)/2}\rho\r\pi')
\hskip40mm
$$
$$
\hskip30mm
=(1\o\nu^{(a-1)/2}\rho+ \nu^{(a-1)/2}\rho \o1
+ \nu^{-(a-1)/2}\rho\o1)
\r \mu^*(\pi').
$$
From (ii) of Proposition 5.2 from \cite{T-seg} we know that
$
\nu^{(a-1)/2}\rho
\o
\d([\nu\rho,\nu^{(a-3)/2}\rho]_{\tau_i};\pc)
\leq
\mu^*(\d([\nu\rho,\nu^{(a-1)/2}\rho]_{\tau_i};\pc)).
$
From this and \eqref{Eq-B7777} follow that 
$$
\theta'\o\d([\nu\rho,\nu^{(a-3)/2}\rho]_{\tau_i};\pc)
\leq\mu^*(\pi')
$$
for some irreducible representation $\theta'$. 
Now Proposition \ref{Pr-eps-ev} implies
$
\e_{\pi'}((\rho,a-2))=i,
$
which  gives
$
\e_{\pi'}((\rho,a-2))=\e_{\pi}((\rho,a)).
$

The proof of lemma is now complete.
\end{proof}

\begin{theorem}\label{Th-red} Let $\pi$ be an irreducible square integrable representation of a
classical group $S_q$, let $\rho$ be an irreducible cuspidal $F'/F$-selfdual representation of $GL(p,F')$ and let 
$a\in
\jrp$, $a\geq 3$. Suppose that there exists $k\in \mathbb Z_{>0}$ such that
\begin{equation}\label{Eq-C1--}
[a-2k,a-2]\cap \jrp=\emptyset.
\end{equation}
 Then there exists an irreducible square integrable representation $\pi'$ of $S_{q-kp}$ such that $\pi$ embeds into
\begin{equation}\label{Eq-C1-}
\d([
\nu^{(a-2(k-1)-1)/2}\rho,
 \nu^{(a-1)/2}\rho])
\r \pi'.
\end{equation}
Then $\pi$ embeds also into
\begin{equation}\label{Eq-C1}
 \nu^{(a-1)/2}\rho\t
\nu^{(a-3)/2}\rho\t
\dots
\t
\nu^{(a-2(k-1)-1)/2}\rho
\r \pi'.
\end{equation}
Let $\pi'$ be  any irreducible square integrable representation  such that $\pi$ embeds into  \eqref{Eq-C1}.
Then:

 \begin{enumerate}

\item  $\pi$ is the unique irreducible subrepresentation of 
$$
\nu^{(a-1)/2}\rho\t
\nu^{(a-3)/2}\rho\dots
\nu^{(a-2(k-1)-1)/2}\rho
\r \pi'.
$$
\item
$$
\pi_{cusp}'= \pi_{cusp}.
$$
\item
\begin{equation}
\label{Eq-C11}
 Jord(\pi')=\left(Jord(\pi)\ \backslash\ \{(\rho,a)\}\right)\cup
\{(\rho,a-2k)\}.
\end{equation}

\item
Let $(\rho',b),(\rho',c)\in Jord(\pi)$ (the possibility $b=c$ is not excluded).

Suppose $\rho'\not\cong\rho$, or $\rho'\cong\rho$ but $a\notin\{b,c\}$.  If $b\ne c$, then
\begin{equation}\label{Eq-C2}
\epsilon_{\pi'}((\rho',b))\epsilon_{\pi'}((\rho',c))^{-1}=
\epsilon_{\pi}((\rho',b))\epsilon_{\pi}((\rho',c))^{-1}.
\end{equation}
Further,
$\epsilon_{\pi'}((\rho',b))$ is defined if and only if
$\epsilon_\pi((\rho',b))$ is defined. If it is defined,  then 
\begin{equation}\label{Eq-C3}
\epsilon_{\pi'}((\rho',b))=\epsilon_{\pi}((\rho',b)).
\end{equation}

Suppose $\rho'\cong\rho$. If $b\ne a$, then
\begin{equation}\label{Eq-C4}
\epsilon_{\pi'}((\rho,b))\epsilon_{\pi'}((\rho,a-2k))^{-1}=
\epsilon_{\pi}((\rho,b))\epsilon_{\pi}((\rho,a))^{-1}.
\end{equation}
Further,
$\epsilon_{\pi'}((\rho,a-2k))$ is defined if and only if
$\epsilon_\pi((\rho,a))$ is defined. If it is defined,  then 
\begin{equation}\label{Eq-C5}
\epsilon_{\pi'}((\rho,a-2k))=\epsilon_{\pi}((\rho,a)).
\end{equation}
\item If $\sigma$ is an irreducible  representation of a classical group such that
\begin{equation}\label{Eq-C5+}
\pi\h \nu^{(a-1)/2}\rho\t
\nu^{(a-3)/2}\rho\t\dots\t
\nu^{(a-2(k-1)-1)/2}\rho
\r \s
,
\end{equation}
then $\sigma\cong\pi'$. In particular, $\sigma$  is uniquely
determined by $\pi$ (and it is square integrable).

\end{enumerate}
\end{theorem}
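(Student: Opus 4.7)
The plan is to reduce the theorem to Lemma \ref{Le-red} by iterating it $k$ times. I would construct a sequence $\pi = \pi_0, \pi_1, \ldots, \pi_k$ of irreducible square integrable representations together with embeddings
$$
\pi_j \h \nu^{(a-2j-1)/2}\rho \r \pi_{j+1}, \qquad 0 \leq j \leq k-1.
$$
At step $j$ one invokes Lemma \ref{Le-red} with the integer $a-2j$ playing the role of $a$. Its hypotheses are verified inductively: by part (3) of Lemma \ref{Le-red}, $Jord_\rho(\pi_j) = (\jrp \backslash \{a\}) \cup \{a-2j\}$, so $a-2j \in Jord_\rho(\pi_j)$; further, $(a-2j)-2 = a-2(j+1)$ is not in $Jord_\rho(\pi_j)$ since it lies in $[a-2k, a-2]$ (hence is absent from $\jrp$ by the gap assumption \eqref{Eq-C1--}) and differs from the inserted element $a-2j$. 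The inequality $a-2j \geq 3$ for $0 \leq j \leq k-1$ follows from the fact that $a-2k$ must have the same parity as $a$ and be at least $1$, so $a-2(k-1) \geq 3$ in all admissible cases. Composing the embeddings gives \eqref{Eq-C1} with $\pi' := \pi_k$.

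Iterating the remaining conclusions of Lemma \ref{Le-red} along this chain immediately yields claims (2) and (3); claim (4) is obtained by composing the ratio-preservation statements \eqref{Eq-B2}--\eqref{Eq-B5} of the lemma, noting that any $(\rho',b) \in Jord(\pi)$ with $(\rho',b) \neq (\rho,a)$ persists unchanged through every intermediate step, so the ratio $\epsilon(\rho',b)\epsilon(\rho',c)^{-1}$ (respectively the individual value $\epsilon(\rho',b)$) transports consistently across all $k$ stages, ultimately replacing $a$ by $a-2k$ in the denominator as in \eqref{Eq-C4}--\eqref{Eq-C5}. The embedding \eqref{Eq-C1-} into the segment representation is then deduced from \eqref{Eq-C1} by Jacquet-module analysis: Frobenius reciprocity applied to \eqref{Eq-C1} shows that $\nu^{(a-1)/2}\rho \o \nu^{(a-3)/2}\rho \o \cdots \o \nu^{(a-2k+1)/2}\rho \o \pi'$ is a quotient of the iterated Jacquet module $s_{(p,\ldots,p)}(\pi)$. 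By transitivity of Jacquet modules, this factors through some irreducible quotient $\tau \o \sigma$ of $s_{(kp)}(\pi)$; since $\tau$ is an irreducible $GL(kp,F')$-representation whose Jacquet module admits the strictly decreasing sequence $\nu^{(a-1)/2}\rho \o \cdots \o \nu^{(a-2k+1)/2}\rho$ as a quotient, the Zelevinsky classification forces $\tau \cong \delta(\Delta)$ with $\Delta = [\nu^{(a-2k+1)/2}\rho, \nu^{(a-1)/2}\rho]$, while matching the classical factor forces $\sigma \cong \pi'$. Frobenius reciprocity then produces $\pi \h \delta(\Delta) \r \pi'$.

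For claim (1), uniqueness of $\pi$ as irreducible subrepresentation in \eqref{Eq-C1} is a multiplicity computation in the iterated Jacquet module: using \eqref{Eq-jm} and \eqref{Eq-M*}, the only way to produce the cuspidal sequence $\nu^{(a-1)/2}\rho \o \cdots \o \nu^{(a-2k+1)/2}\rho \o \pi'$ from the expansion of $M^*(\nu^{(a-1)/2}\rho) \cdots M^*(\nu^{(a-2k+1)/2}\rho) \r \mu^*(\pi')$ is by selecting the ``$\nu^x \rho \o 1$'' term in each $M^*$ and the trivial term $1 \o \pi'$ of $\mu^*(\pi')$, giving multiplicity exactly one, so two distinct irreducible subrepresentations would both have to produce this term by Frobenius reciprocity, a contradiction. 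Claim (5) is established in the same spirit: if $\sigma$ satisfies \eqref{Eq-C5+}, then $\nu^{(a-1)/2}\rho \o \cdots \o \nu^{(a-2k+1)/2}\rho \o \sigma$ sits in $\mu^*(\pi)$, and the same multiplicity-one calculation combined with (1) of Proposition \ref{Pr-jb-jac} (to rule out $(\rho,a) \in Jord_\rho(\sigma)$) forces $\sigma \cong \pi'$, while square integrability is inherited from the inductive construction. The main technical hurdle is the Jacquet-module identification in the passage from \eqref{Eq-C1} to \eqref{Eq-C1-}, where one must invoke Zelevinsky's classification to single out $\delta(\Delta)$ among all irreducible representations with the prescribed cuspidal support rather than some Langlands quotient or intermediate segment combination.
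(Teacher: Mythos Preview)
Your overall plan matches the paper's: iterate Lemma~\ref{Le-red} to build the chain $\pi=\pi_0,\dots,\pi_k=\pi'$, transport (2)--(4) along it, and handle (1) and (5) by a multiplicity-one computation in $\mu^*(\Pi)$. The one real gap is your passage from \eqref{Eq-C1} to \eqref{Eq-C1-}. You assert that the surjection from $s_{(p,\dots,p)}(\pi)$ onto the decreasing cuspidal string ``factors through some irreducible \emph{quotient} $\tau\otimes\sigma$ of $s_{(kp)}(\pi)$''; but transitivity and exactness of Jacquet functors only produce an irreducible \emph{subquotient} with this property (run down a composition series of $s_{(kp)}(\pi)$: at each stage the surjection either factors through the top quotient or restricts surjectively to the Jacquet module of the maximal sub, and one recurses --- but the irreducible at which one terminates may well be a sub rather than a quotient). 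A subquotient is not enough for the final Frobenius step, so the embedding $\pi\hookrightarrow\delta(\Delta)\rtimes\pi'$ is not yet justified.

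The paper avoids this by proving \eqref{Eq-C1-} directly by induction along your chain: from $\pi\hookrightarrow\delta(\Delta_j)\rtimes\pi_j$ and $\pi_j\hookrightarrow\nu^{(a-2j-1)/2}\rho\rtimes\pi_{j+1}$ one has $\pi\hookrightarrow\delta(\Delta_j)\times\nu^{(a-2j-1)/2}\rho\rtimes\pi_{j+1}$. Since the two segments are linked with $\Delta_j$ on top, $\delta(\Delta_{j+1})\rtimes\pi_{j+1}$ sits in the latter as a subrepresentation with quotient embedding into $\nu^{(a-2j-1)/2}\rho\times\delta(\Delta_j)\rtimes\pi_{j+1}$; if $\pi$ were not contained in the sub, it would embed in this quotient, and Proposition~\ref{Pr-jb-jac}(1) would give $a-2j\in\jrp$, contradicting \eqref{Eq-C1--}. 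This is shorter than your Zelevinsky route and sidesteps the quotient-versus-subquotient issue entirely. Your route can also be salvaged, though less directly: once $\delta(\Delta)\otimes\pi'$ is known to be a subquotient of $s_{(kp)}(\pi)$, combine this with your multiplicity-one computation for (1) and the evident embedding $\delta(\Delta)\rtimes\pi'\hookrightarrow\Pi$ to conclude that $\pi$ and $\delta(\Delta)\rtimes\pi'$ cannot be disjoint subrepresentations of $\Pi$ (else $\delta(\Delta)\otimes\pi'$ would occur twice in $\mu^*(\Pi)$), whence $\pi\hookrightarrow\delta(\Delta)\rtimes\pi'$.
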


\begin{proof} First we shall prove by induction that $\pi$ can be embedded into representation of type \eqref{Eq-C1-}. For $k=1$ this follows Lemma \ref{Le-sr}. Suppose that $[a-2(k+1),a-2]\cap \jrp=\emptyset$, and that we have an embedding 
$$
\pi\h \d([
\nu^{(a-2(k-1)-1)/2}\rho,
 \nu^{(a-1)/2}\rho])
\r \pi'.
$$
Now Lemma \ref{Le-sr}, \eqref{Eq-C11} and
assumption $[a-2(k+1),a-2]\cap \jrp=\emptyset$ implies that 
$$
\pi'\h \nu^{(a-2(k+1)-1)/2}\rho\r\pi''
$$
 for some square integrable representation $\pi'$ (use (1) of Proposition \ref{Pr-jb-main}). Further, (1) of Proposition \ref{Pr-jb-main} implies
$$
Jord(\pi'')=\left(Jord(\pi)\ \backslash\ \{(\rho,a)\}\right)\cup
\{(\rho,a-2(k+1))\}.
$$
 Observe that 
$$
\pi\h
 \d([
\nu^{(a-2(k-1)-1)/2}\rho,
 \nu^{(a-1)/2}\rho])
\t  \nu^{(a-2(k+1)-1)/2}\rho\r\pi''.
$$
We know 
$$
 \d([
\nu^{(a-2(k+1)-1)/2}\rho,
 \nu^{(a-1)/2}\rho])
\r\pi''
\h
 \d([
\nu^{(a-2(k-1)-1)/2}\rho,
 \nu^{(a-1)/2}\rho])
\t  \nu^{(a-2(k+1)-1)/2}\rho\r\pi''.
$$
Suppose
\begin{equation}
\label{Eq-C55}
\pi\not\h
 \d([
\nu^{(a-2(k+1)-1)/2}\rho,
 \nu^{(a-1)/2}\rho])
\r\pi''.
\end{equation}
Then  we can easily get 
$$
\pi\h
 \nu^{(a-2(k+1)-1)/2}\rho
\t
 \d([
\nu^{(a-2(k-1)-1)/2}\rho,
 \nu^{(a-1)/2}\rho])
\r\pi''.
$$
Now 
(1) of Proposition \ref{Pr-jb-jac} implies $a-2(k+1)\in \jrp$, which is a contradiction. This tells that \eqref{Eq-C55} can not happen. Therefore, we have proved the existence of an embedding of type \eqref{Eq-C1-}.

Observe that (2) follows directly from Proposition \ref{Pr-pcs}. Further, (3) follows from Proposition \ref{Pr-jb-jac}.

Let us now prove (1). Denote representation \eqref{Eq-C1} by $\Pi$.  To prove (1), it  is enough to prove that the multiplicity of 
\begin{equation}
\label{Eq-C6}
 \nu^{(a-1)/2}\rho\o
\nu^{(a-3)/2}\rho\o
\dots
\o
\nu^{(a-2(k-1)-1)/2}\rho
\o\pi'.
\end{equation}
in the Jacquet module of $\Pi$ is 1. For this, it is enough to prove that the multiplicity of 
$
\d([
\nu^{(a-2(k-1)-1)/2}\rho,
 \nu^{(a-1)/2}\rho])
\r \pi'
$
in $\mu^*(\Pi)$ is one. We shall now prove that if an irreducible representation
$$
\omega=
\d([
\nu^{(a-2(k-1)-1)/2}\rho,
 \nu^{(a-1)/2}\rho])
\r \s
$$
 is in $\mu^*(\Pi)$, then $\s\cong \pi'$ and it has multiplicity one in $\mu^*(\Pi)$.

We have
\begin{equation}
\label{Eq-prod}
\mu^*(\Pi)=\underset{i=(a-2(k-1)-1)/2}{\overset{(a-1)/2}{\t}}(1\o\nu^i\rho+
\nu^i\rho\o1+
\nu^{-i}\rho\o1)\r\mu^*(\pi').
\end{equation}
Suppose that for each index $i$ in the above formula we have picked up the term $\nu^i\rho\o\rho$. To get $\omega$ for a subquotient, we must take from $\mu^*(\pi')$ the term $1\o\pi'$. In the corresponding product, if $\omega$ is a subquotient, then $\s\cong\pi'$ and it has multiplicity one. 

Suppose that for some index $i$ in \eqref{Eq-prod} we have picked up one of two terms different from $\nu^i\rho\o 1$ and suppose that $\omega$ is a subquotient of the corresponding product. Obviously, the term $\nu^{-i}\rho$ can not give $\omega$ (we see this from the cuspidal supports).
Thus, the only remaining possibility is $\nu^i\rho\o1$. Now to get $\omega$ for a subquotient, we must  take  from $\mu^*(\pi')$  a non-zero term of form $\tau'\o\s$, where $\tau'$ is an irreducible representation of some $GL(l,F')$ with $l\geq 1$ such that the cuspidal support of  $\tau'$ is contained in
$\{\nu^{\frac{a-2(k-1)-1}2}\rho,\dots, \nu^{\frac{a-3}2}\rho,
\nu^{\frac{a-1}2}\rho\}$. This would imply that $\ell\in Jord_\rho(\pi')$ for some $\ell \in \{
a-2(k-1),\dots, a-2,
a\}$. This contradict to \eqref{Eq-C1--} and (3). This contradiction ends the proof of the claim.

We get (4) applying Lemma \ref{Le-red} several times.

Suppose that we have an embedding \eqref{Eq-C5+}. Then $\omega$ which we have defined above must be a
quotient of the Jacquet module of $\pi$, and therefore in $\mu^*(\Pi)$. We have seen that then $\s\cong\pi'$.
This ends the proof of (5).

The proof of the theorem is now complete.
\end{proof}

\begin{definition}\label{down} The representation $\pi'$ from the above theorem will be denoted by 
$$
\pi^{(\rho,a\downarrow a-2k)}.
$$
Further, the representation 
$\pi$ from the above theorem will be denoted by 
$$
(\pi')^{(\rho,a-2k\uparrow a)}.
$$
\end{definition}

\section{Appendix: an irreducibility result}\label{add}
\setcounter{equation}{0}

 The following simple, but often useful result  is proved in \cite{T-RPI} (Theorem 13.2). Let $\D$ be a segment in irreducible cuspidal representations of general linear groups and let $\sigma$ be an irreducible cuspidal representation of a classical group. Then, assuming that (BA) holds, we have
$$
\d(\D)\r\s \text{ reduces } \iff \rho\r\s \text{ reduces for some }\rho\in\D,
$$
or equivalently
$$
 \rho\r\s \text{ is irreducible for all }\rho\in\D \iff \d(\D)\r\s \text{ is irreducible}.
$$

If we take instead of cuspidal $\s$ an irreducible square integrable representation $\pi$, the above equivalence does not  hold in general (we can have $\d(\D)\r\pi $ irreducible despite the fact that $ \rho\r\pi$ reduces for some $\rho\in\D$).
Instead of the equivalence, for the square integrable representation  $\pi$ one implication  still holds. This follows from
G. Mui\'c's paper \cite{Mu3}, where he has described  completely (besides others) reducibility points of representations $\d(\D)\r\pi$.
Proof of this implication 
 is elementary in comparison with his results. For the convenience of the reader, we present here a proof of this implication (which  we have used  in this paper). Before we give the proof, we shall recall  (general) Proposition 6.1 from \cite{T-CJM}, which we shall use several times in the proof below (note that in  (vii) of Proposition 6.1 in \cite{T-CJM}, the condition weather $(\rho,2)$ satisfy or not satisfy (J1)  was forgotten).
 
 \begin{proposition}
 \label{Pr-red-si}
  Let $\rho$ be an irreducible $F'/F$-self dual cuspidal representation of a general linear group, and let
$\pi$ be an irreducible square integrable representation of  $S_q$.
Suppose that (BA) holds. 
Let
$a$ be a positive integer. Then:
\begin{enumerate}

\item [(i)] For $\alpha\in \mathbb R$, $\nu^\a\rho\r\pi$ reduces if and only
if
$\nu^{-\a}\rho\r\pi$ reduces.

\item[(ii)] If $\a\in \mathbb R \backslash (1/2)\mathbb Z$, then $\nu^\a\rho\r\pi$
is irreducible.

\item[(iii)] $\rho\rtimes \pi$ reduces if and only if $\rho$ has odd parity
with respect to $\pi$ and 
$1\not\in Jord_\rho(\pi)$.

\item [(iv)] If $a\not\in Jord_\rho(\pi)$, then
$\nu^{(a+1)/2}\rho\rtimes\pi$ is irreducible.

\item[(v)] If $a\in\jrp$ and $a+2\not\in\jrp$, then
$\nu^{(a+1)/2}\rho\rtimes\pi$  reduces.

\item[(vi)] Suppose that $a$ and $a+2$ are in $\jrp$. Then
$\nu^{(a+1)/2}\rho\rtimes\pi$  reduces if and only if $\e((\rho,a))\e((\rho,a+2))=1$.

\item[(vii)] $\nu^{1/2}\rho\r\pi$  reduces  if
and only if $(\rho,2)$ satisfies {\rm (J1)} and $2\not\in \jrp$, or $2\in\jrp$ and $\e((\rho,2))=1$. 

\noindent
In other words,
$\nu^{1/2}\rho\r\pi$ is irreducible if and only if $(\rho,2)$ does not satisfy {\rm (J1)}, or $2\in \jrp$ and $\e((\rho,2))=-1$.

\end{enumerate}
\end{proposition}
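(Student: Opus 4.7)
The plan is to verify the seven items in order of increasing depth, leaning throughout on the M\oe glin--Tadi\'c classification, Proposition \ref{Pr-jb-main}, Lemma \ref{Le-sr}, the Jacquet-module formula \eqref{Eq-jm}, and the partially defined function $\ep$. Item (i) is immediate: since $\rho$ is $F'/F$-selfdual one has $\check{\nu^{\alpha}\rho}\cong\nu^{-\alpha}\rho$, so \eqref{asso} gives $\text{s.s.}(\nu^\alpha\rho\rtimes\pi)=\text{s.s.}(\nu^{-\alpha}\rho\rtimes\pi)$ and the two representations reduce simultaneously. For item (ii), writing $\pi\hookrightarrow\theta\rtimes\pi_{cusp}$ with $\theta$ a product of segment representations having half-integer exponents, any reducibility of $\nu^\alpha\rho\rtimes\pi$ would propagate through \eqref{Eq-jm} into the cuspidal layer, where (BA) combined with Silberger forces the cuspidal reducibility exponent to be a half-integer, whence $\alpha\in(1/2)\mathbb{Z}$.

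Items (iii)--(v) are direct consequences of the definition of Jordan blocks and existing tools. Item (v) is Lemma \ref{Le-sr}. For (iii): the condition that $\rho$ has odd parity with respect to $\pi$ amounts to (J1) holding for $a=1$, after which $\rho\rtimes\pi=\delta(\rho,1)\rtimes\pi$ reduces iff $1\notin\text{Jord}_\rho(\pi)$ by (J2); if (J1) fails for $a=1$, then $(\rho,1)$ cannot belong to $\text{Jord}(\pi)$ by definition, while (BA) applied to $\pi_{cusp}$ ensures that the only half-integer reducibility of $\rho\rtimes\pi_{cusp}$ occurs at $\nu^{1/2}\rho$, so $\rho\rtimes\pi$ stays irreducible. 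For (iv), if $a\notin\text{Jord}_\rho(\pi)$ and $\nu^{(a+1)/2}\rho\rtimes\pi$ were reducible, a square integrable subrepresentation $\pi''$ would satisfy $\nu^{(a+1)/2}\rho\otimes\pi\leq\mu^*(\pi'')$ by Frobenius, and Proposition \ref{Pr-jb-main}(1) with $x=y=(a+1)/2$ would force $a\in\text{Jord}_\rho(\pi)$, a contradiction.

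The two remaining items form the main obstacle. For (vi), under $a,a+2\in\text{Jord}_\rho(\pi)$, Theorem \ref{Th-eps=} together with Remark \ref{epsilon=} identifies $\ep((\rho,a))\ep((\rho,a+2))=1$ with the existence of an embedding $\pi\hookrightarrow\delta([\nu^{-(a-1)/2}\rho,\nu^{(a+1)/2}\rho])\rtimes\pi^{-\{(\rho,a),(\rho,a+2)\}}$; a direct $\mu^*$-computation in the spirit of Lemmas \ref{Le-def-main}--\ref{Le-def-odd-2} extracts $\nu^{(a+1)/2}\rho\otimes\pi$ from the resulting Jacquet module of $\pi$, so by Frobenius $\pi\hookrightarrow\nu^{(a+1)/2}\rho\rtimes\pi$ and the induced representation reduces. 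The converse is obtained by taking a square integrable subquotient of $\nu^{(a+1)/2}\rho\rtimes\pi$, using Corollary \ref{Cor-bc} to promote Jacquet information to an embedding, and invoking the multiplicity analysis of Proposition \ref{Pr-eps=}. Item (vii) follows the same template with $a=2$ replacing the pair, the role of Theorem \ref{Th-eps=} played by Proposition \ref{Pr-eps-ev}, and (BA) deciding whether $(\rho,2)$ satisfies (J1). The key hurdle, shared by (vi) and (vii), is ruling out the ``parasitic'' subquotient $\nu^{-(a+1)/2}\rho\otimes(\cdots)$ of $\mu^*(\nu^{(a+1)/2}\rho\rtimes\pi)$ when reconstructing the embedding; this is handled by the multiplicity-one arguments used in Propositions \ref{Pr-eps=} and \ref{Pr-eps-ev}, which ultimately rest on the explicit form of $M^*$ in \eqref{Eq-M-seg}.
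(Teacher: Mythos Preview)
The paper does not prove this proposition: it is explicitly recalled from \cite{T-CJM} (Proposition~6.1 there), with no argument given here beyond a correction to the statement of (vii). So there is no in-paper proof to compare against; your sketch is an attempt to reconstruct a proof using only the tools of the present paper.

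Parts of your outline are fine --- (i) is correct as written, and (v) is indeed Lemma~\ref{Le-sr}. But there are genuine gaps elsewhere. In (iv) you assume that reducibility of $\nu^{(a+1)/2}\rho\rtimes\pi$ yields a square-integrable subrepresentation $\pi''$; this is not automatic and is in fact one of the things the cited paper has to establish. More seriously, Proposition~\ref{Pr-jb-main} carries a parity hypothesis (``$x,y\in\mathbb Z$ iff the $L$-function has no pole at $s=0$''), so your application with $x=y=(a+1)/2$ only covers the case where $(\rho,a)$ satisfies (J1); when $a$ has the wrong parity, $a\notin\text{Jord}_\rho(\pi)$ holds trivially but you have given no argument for irreducibility. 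The same parity gap afflicts your treatment of (iii) in the even-parity case: saying that (BA) controls the cuspidal reducibility exponent of $\rho\rtimes\pi_{cusp}$ does not by itself force $\rho\rtimes\pi$ to be irreducible for general square-integrable $\pi$. For (vi) and (vii), the forward implications are roughly as you indicate, but the converses are too vague: once again you invoke an unjustified square-integrable subquotient, and Propositions~\ref{Pr-eps=}--\ref{Pr-eps-ev} are designed to pass from Jacquet subquotients of $\pi$ to $\epsilon$-values, not from reducibility of $\nu^{(a+1)/2}\rho\rtimes\pi$ to an embedding of $\pi$. These are precisely the substantive obstacles that the argument in \cite{T-CJM} is there to overcome.
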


Recall, if  $\rho$ is a not $F'/F$-self dual irreducible  cuspidal representation of a general linear group and $\alpha\in\mathbb R$, then $\nu^\alpha\rho\r\pi$ is  irreducible ($\pi$ is an irreducible square integrable representation of  $S_q$).

\begin{lemma}
\label{Le-irr}
Let $\D$ be a segment in irreducible cuspidal representations of general linear groups.
Suppose that
$
\tau\r\pi
$
is irreducible for all $\tau\in \D$. Then
$$
\d(\D)\r\pi
$$
is irreducible.
\end{lemma}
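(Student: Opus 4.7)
The plan is to argue by induction on $n := |\Delta| = b - a + 1$, where $\Delta = [\nu^a\rho, \nu^b\rho]$; the base case $n = 1$ is exactly the hypothesis. For the inductive step, set $\Delta^- := [\nu^a\rho, \nu^{b-1}\rho]$. By the inductive hypothesis (applied to $\Delta^-$, whose elements form a subset of $\Delta$), the representation $\sigma := \delta(\Delta^-) \rtimes \pi$ is irreducible. From the Zelevinsky short exact sequence $0 \to \delta(\Delta) \to \nu^b\rho \times \delta(\Delta^-) \to L \to 0$, with $L := L(\Delta^-, \{\nu^b\rho\})$ the Langlands quotient, exactness of parabolic induction yields
$$
0 \to \delta(\Delta) \rtimes \pi \to \nu^b\rho \rtimes \sigma \to L \rtimes \pi \to 0,
$$
reducing the problem to an analysis of the structure of $\nu^b\rho \rtimes \sigma$ as a subquotient of which $\delta(\Delta) \rtimes \pi$ sits.

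The main tool is the Jacquet module calculus: using \eqref{Eq-jm} together with \eqref{Eq-M-seg}, I would analyze $\mu^*(\delta(\Delta) \rtimes \pi) = M^*(\delta(\Delta)) \rtimes \mu^*(\pi)$ and track the multiplicity of the target subquotient $\delta(\Delta) \otimes \pi$ (and, by a symmetric count, of its ``dual'' variant $\delta([\nu^{-b}\rho, \nu^{-a}\rho]) \otimes \pi$). The hypothesis that $\nu^c\rho \rtimes \pi$ is irreducible for every $c \in \Delta$ translates, via Proposition \ref{Pr-red-si}(iii)--(vii), into sharp constraints on $\text{Jord}_\rho(\pi)$ in a neighborhood of $[a,b]$: no $(\rho, 2c+1)$ with $c \in [a,b]$ may satisfy the ``reducibility-producing'' Jordan-block configurations. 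Combining this with Proposition \ref{Pr-jb-jac}, any would-be ``spurious'' contribution to $\mu^*(\delta(\Delta) \rtimes \pi)$ coming from summands in \eqref{Eq-M-seg} with indices $(i, j) \neq (a-1, b)$ paired with a non-trivial subquotient of $\mu^*(\pi)$ must put a forbidden Jordan block of $\pi$ at some $(\rho, 2c+1)$ with $c \in \Delta$, contradicting the hypothesis via Proposition \ref{Pr-red-si}. The resulting multiplicity count, combined with Frobenius reciprocity applied on both the submodule and quotient sides, forces $\delta(\Delta) \rtimes \pi$ to be irreducible.

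The main obstacle is the case-by-case verification of these multiplicity-and-vanishing statements in the Jacquet module. The combinatorics of the summands of $M^*(\delta(\Delta))$ must be matched carefully against the possible irreducible subquotients $\theta \otimes \tau \leq \mu^*(\pi)$, with every putative exceptional pair analyzed via cuspidal-support bookkeeping and Proposition \ref{Pr-jb-jac} so as to locate a Jordan block of $\pi$ at $(\rho, 2c+1)$ with $c \in \Delta$, whereupon Proposition \ref{Pr-red-si} contradicts the irreducibility assumption on $\nu^c\rho \rtimes \pi$. Particular care is required at boundary configurations, e.g.\ when $0 \in \Delta$ (where parts (iii) and (vii) of Proposition \ref{Pr-red-si} govern the half-integer reducibility points) or when an endpoint $a$ or $b$ is adjacent to an element of $\text{Jord}_\rho(\pi)$. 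The case $\rho \not\cong \check\rho$ is comparatively easy, since then the cuspidal support of the ``dual'' segment appearing in $M^*(\delta(\Delta))$ is disjoint from $\Delta$, the combinatorial matchings degenerate, and irreducibility follows directly from the multiplicity-one count.
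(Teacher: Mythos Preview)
Your outline shares the induction and the Jacquet-module toolkit with the paper, but there is a genuine gap in the central step. You write that a spurious contribution to $\mu^*(\delta(\Delta)\rtimes\pi)$ ``must put a forbidden Jordan block of $\pi$ at some $(\rho,2c+1)$ with $c\in\Delta$, contradicting the hypothesis via Proposition~\ref{Pr-red-si}.'' But Proposition~\ref{Pr-jb-jac} only gives the implication \emph{subquotient in $\mu^*(\pi)\Rightarrow$ Jordan block exists}; it does not say the resulting Jordan block is incompatible with the irreducibility hypothesis. Indeed, by part~(vi) of Proposition~\ref{Pr-red-si}, one can have $(\rho,2c-1),(\rho,2c+1)\in\mathrm{Jord}_\rho(\pi)$ with $\nu^{c}\rho\rtimes\pi$ irreducible, provided $\epsilon_\pi((\rho,2c-1))\epsilon_\pi((\rho,2c+1))=-1$. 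So the presence of a Jordan block is not itself a contradiction. To actually rule out the relevant terms $\nu^{c}\rho\otimes\tau$ in $\mu^*(\pi)$ you need the finer $\epsilon$-information via Proposition~\ref{Pr-eps=}, which your sketch does not invoke. The paper's treatment of the ``delicate'' case $\Delta=\{\rho,\nu\rho\}$ makes exactly this move: from irreducibility one extracts $\epsilon_\pi((\rho,1))\epsilon_\pi((\rho,3))^{-1}=-1$, and only then (via Proposition~\ref{Pr-eps=}) concludes that $\nu\rho\otimes\tau$ does not occur in $\mu^*(\pi)$, which is what makes the multiplicity count go through.

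It is also worth noting that the paper does \emph{not} carry out a uniform multiplicity computation. It splits into three regimes. For $\delta(\Delta)$ unitarizable it argues by contradiction directly from Proposition~\ref{Pr-red-si}, walking Jordan blocks down to a reducibility at $0$ or $1/2$. For the generic non-unitarizable case it bypasses multiplicity counts entirely: writing $\theta_s\hookrightarrow\nu^{b}\rho\times\delta(\Delta^-)\rtimes\pi$, using the inductive hypothesis to flip $\delta(\Delta^-)\rtimes\pi\cong\delta(\check{\Delta^-})\rtimes\pi$, and then commuting factors (when the segments are unlinked) to force $\delta(\check\Delta)\otimes\pi$ into the Jacquet module of $\theta_s$; this identifies $\theta_s$ with the Langlands quotient and gives irreducibility. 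Only the boundary case $\Delta=\{\rho,\nu\rho\}$ with $\rho$ self-dual (where the required unlinking fails) is handled by an explicit multiplicity computation, and there the $\epsilon$-input above is essential. Your plan could likely be made to work, but not with Proposition~\ref{Pr-jb-jac} alone, and the paper's segment-swapping argument is both shorter and avoids most of the case analysis you anticipate.
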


Not that  for the above result, we assume also that the basic assumption (BA) from section \ref{notation}  holds.

\begin{proof} We shall prove the lemma by induction with respect to Card$(\D)$. For
Card$(\D)=1$ there is nothing to prove. Therefore we shall assume in what follows that Card$(\D)\geq 2$.

First consider the case when $\d(\D)$ is unitarizable. Write $\d(\D)=\d(\rho,a)$. Suppose that
$\d(\D)\r\pi$ is reducible. Then $(\rho,a)$ satisfies (J1) and $a\not\in\jrp$. Further suppose that  $\tau\r\pi$ is irreducible for all $\tau\in \D$.
Assume that $a$ is odd. 
Now (v) of Proposition \ref{Pr-red-si} (and the fact that $\nu^{(a-1)/2}\rho\r\pi , \nu^{(a-1)/2-1}\rho\r\pi,\dots,\nu\rho\r\pi,\nu\rho\r\pi$ are all irreducible by our assumptions) implies that  $a-2, a-4,\dots, 3,1$ are not
in
$\jrp$. But $1\notin\jrp$ implies that $\rho\r\pi$ reduces. Since $\rho\in \D$, we have obtained
a contradiction. It remains to consider the case of $a$  even. Now in the same
way as before (using (v) of Proposition \ref{Pr-red-si}), we get that $a-2,a-4,\dots,2$ are not in $\jrp$. Since $(\rho,2)$ satisfies (J1), (vii) of the same proposition implies that
$\nu^{1/2}\rho\r\pi$ reduces. This is again a contradiction. This completes the proof of the lemma in the case of $\d(\D)$ unitarizable.

Now we go to the case when $\d(\D)$ is not unitarizable.  We shall first consider a delicate case,   the case  of $\D=\{\rho,\nu\rho\}$, where $\rho$ is an irreducible $F'/F$-self dual cuspidal representation of a general linear group such that $\rho\r\pi$ and $\nu\rho\r\pi$ are both irreducible. Note that a representation of
the form
$\nu^\alpha\rho\o\tau$, $\a\leq 0$, cannot be a subquotient of the Jacquet module of $\pi$, since $\pi$ is
square integrable (this follows directly from the square integrability criterion of Casselman from \cite{C-int}). We consider now  two possibilities.  Suppose first that the parity of $\rho$ is odd (i.e. that $(\rho,1)$ satisfies (J1)). Then first $1\in\jrp$ (because $\rho\r\pi$ is irreducible). Now (v) of Proposition \ref{Pr-red-si} implies $3\in\jrp$ (because 
$\nu\rho\r\pi$ is irreducible). Further, (vi) of the same proposition and the irreducibility of $\nu\rho\r\pi$ imply
$\epsilon_\pi((\rho,1))\epsilon_\pi((\rho,3))^{-1}=1$. This and the definition of $\epsilon_\pi$ (using Proposition \ref{Pr-eps=}) imply that a
representation of the form
$\nu\rho\o\tau$ can not be  a subquotient of the Jacquet module of $\pi$. 
If the parity
of
$\rho$ is even, then a representation of the form $\nu\rho\o\tau$ 
 can not again be a subquotient of  of the Jacquet module of $\pi$ by (1) of Proposition \ref{Pr-jb-jac} (since 3 is odd).

By  the proof of the lemma for $\d(\D)$ unitarizable, our assumptions on $\rho$ and $\pi$ imply that $\d([\nu^{-1}\rho,\nu\rho])\r\pi$ is
irreducible. Now \cite{G} (or \cite{Moe-T}) implies that  $\rho\t \d([\nu^{-1}\rho,\nu\rho])\r\pi$ is irreducible.
Recall
$$
\mu^*(\rho \times \rho \times \nu \rho \times \nu \rho
\rtimes \pi) =
(1\o\rho+2\rho\o1)^2
\t
(1\o\nu\rho+\nu\rho\o1+\nu^{-1}\rho\o1)^2
\r
\mu^*(\pi).
$$
From this formula and from the above observations about Jacquet modules of $\pi$, one gets directly that the
multiplicity of $
\delta([\rho,\nu \rho]) \times \delta([\rho, \nu \rho])
\otimes \pi $
in
$
\mu^*(\rho \times \rho \times \nu \rho \times \nu \rho
\rtimes \pi) $ is four.
Further, one gets easily that 
the  multiplicity of $\delta([\rho,\nu \rho]) \times \delta([\rho, \nu \rho])
\otimes \pi$  in
$\mu^*(\rho \times
\delta([\nu^{-1}\rho,\nu\rho]) \rtimes \pi)$ is also four.

 Let $\theta_s$ be an
irreducible subrepresentation of $\delta([\rho, \nu \rho]) \rtimes \pi$. Note that
$\delta([\rho, \nu \rho]) \o \pi\leq \mu^*(\theta_s)$ (by Frobenius reciprocity). This implies 
\begin{equation}
\label{Eq-app-1}
\delta([\rho, \nu
\rho])
\t \delta([\rho, \nu \rho]) \o \pi\leq  \mu^*(\delta([\rho, \nu \rho]) \rtimes \theta_s).
\end{equation}
Since the
 multiplicity
 of $\delta([\rho,\nu \rho]) \times \delta([\rho, \nu \rho])
\otimes \pi$ in the Jacquet modules of  $\rho \times \rho \times \nu \rho \times \nu \rho
\rtimes \pi$ and $\rho \times
\delta([\nu^{-1}\rho,\nu\rho]) \rtimes \pi$ is four in both cases, and the last representation is irreducible, we see that
\begin{equation}
\label{Eq-app-2}
\rho \times
\delta([\nu^{-1}\rho,\nu\rho]) \rtimes \pi\leq \delta([\rho, \nu \rho]) \rtimes \theta_s.
\end{equation}

From the Langlands classification follows that the representation $\delta([\rho, \nu \rho]) \rtimes \pi$ has a unique irreducible quotient. Denote it by $\theta_q$. Then $\theta_q\h \delta([\nu^{-1}\rho, \rho ]) \rtimes \pi$, which implies $ \delta([\nu^{-1}\rho, \rho ]) \o \pi \leq \mu^*(\theta_q)$.

Suppose that $\delta([\rho, \nu \rho]) \rtimes \pi$ reduces. Then $\theta_s\not\cong\theta_q$.
Now starting with \eqref{Eq-app-2}, we get
$$
\aligned
\mu^*(\rho \times
\delta([\nu^{-1}\rho,\nu\rho]) \rtimes \pi)\leq \mu^*(\delta([\rho, \nu \rho]) \rtimes
\theta_s)
\hskip60mm
\\
\hskip40mm
\leq M^*(\delta([\rho,\nu\rho]))\r
\bigg(M^*(\delta([\rho,\nu\rho]))\r\mu^*(\pi)-\delta([\nu^{-1}\rho,\rho])\o
\pi\bigg).
\endaligned
$$
One directly gets that the multiplicity of $\rho \times
\delta([\nu^{-1}\rho,\nu\rho]) \o \pi$ in $\mu^*(\rho \times
\delta([\nu^{-1}\rho,\nu\rho]) \rtimes \pi)$ is  four (for the proof we need only that it is at least four).
This and the above remarks about Jacquet modules of $\pi$ (regarding the terms of the form $\nu\rho\o\tau$ and $\nu^\a\rho\o\tau$ for $\a\leq0$) imply
$$
\aligned
4\rho \times \delta([\nu^{-1}\rho, \nu\rho]) \otimes \pi
\leq \big(\delta([\rho, \nu\rho]) + \rho\t\nu\rho+
\delta([\nu^{-1}\rho, \rho])\big)\t \big(\delta([\rho, \nu\rho]) +
\rho\t\nu\rho\big)\o\pi.
\endaligned
$$
This implies
$$
4\rho \times \delta([\nu^{-1}\rho, \nu\rho]) \otimes \pi 
\leq 
\delta([\nu^{-1}\rho, \rho])\t \big(\delta([\rho, \nu\rho]) +
\rho\t\nu\rho\big)\o\pi.
$$
Obviously, this cannot happen. Therefore, we have proved the irreducibility of
$\delta([\rho, \nu\rho]) \r \pi $.

We consider now the case $\D=[\nu^\a \rho , \nu^\b\rho]$, where $\a,\b\in \mathbb R$, $\b-\a\in \mathbb Z_{>0}$ and $\rho$ is an $F'/F$-selfdual irreducible cuspidal representation of a general linear group. Suppose that $\D$ satisfies the condition of the lemma (with respect to $\pi$). Since $\d([\nu^\a \rho , \nu^\b\rho])\r\pi$ and $\d([\nu^{-\b} \rho , \nu^{-\a}\rho])\r\pi$ have the same composition series, it is enough to prove the irreducibility of $\d(\D)\r\pi$ in the case  $\a+\b> 0$. 
By the previous part of the proof, it is enough to consider the case $\D\ne \{\rho,\nu\rho\}$ (therefore if  $\a+1=\b$, then  $\a\ne 0$), which we shall assume in what follows.

Let $\theta_s$ be an irreducible subrepresentation of $\d(\D)\r\pi$. Note that $\d(\D)\r\pi$
has a unique irreducible quotient. Now a well-known embedding  of $\d(\D)$ and the
inductive assumption imply
$$
\theta_s \hookrightarrow \d(\D)\r\pi
\h \nu^\b\rho\t\d([\nu^{\a}\rho,\nu^{\b-1}\rho])\r\pi
\cong \nu^\b\rho\t\d([\nu^{-\b+1}\rho,\nu^{-\a}\rho])\r\pi.
$$
Observe $\b+1
\ne-\b+1$ (since $\b>0$).

Suppose $-\a+1\ne\b$, i.e. $\a\ne-\b+1$. Then 
$\nu^\b\rho\t\d([\nu^{-\b+1}\rho,\nu^{-\a}\rho])$ is irreducible  (recall $\b+1
\ne-\b+1$), which implies
$$
\theta_s \hookrightarrow  
 \d([\nu^{-\b+1}\rho,\nu^{-\a}\rho])\t \nu^\b\rho\r\pi
\cong \d([\nu^{-\b+1}\rho,\nu^{-\a}\rho])\t \nu^{-\b}\rho\r\pi.
$$
Now Frobenius reciprocity implies that  $\d([\nu^{-\b+1}\rho,\nu^{-\a}\rho])\otimes
\nu^{-\b}\rho\otimes\pi$ is a quotient of the Jacquet module of $\theta_s$. This and the transitivity of Jacquet modules imply that 
$\d([\nu^{-\b}\rho,\nu^{-\a}\rho])\otimes\pi$ must be also a subquotient of the Jacquet module
of $\theta_s$. But then $\theta_s$ must be the unique irreducible quotient of $\d(\D)\r\pi$
(see Lemma 4.4 of \cite{Moe-T}). This implies the irreducibility of $\d(\D)\r\pi$. 

It remains to consider the case $\a=-\b+1$. Note that $\b-\a\in \mathbb Z_{>0}$ implies $\b\geq 1$, and further $\D\ne \{\rho,\nu\rho\}$  implies $b>1$. For an irreducible subrepresentation $\theta_s$ of $\d([\nu^{-\b+1}\rho,\nu^\b\rho])\r\pi$, we proceed similarly as above:
$$
\theta_s\h
\d([\nu^{-\b+1}\rho,\nu^\b\rho])\r\pi\h
\d([\nu^{-\b+2}\rho,\nu^\b\rho])\t \nu^{-\b+1}\rho \r\pi
\cong \d([\nu^{-\b+2}\rho,\nu^\b\rho])\t \nu^{\b-1}\rho \r\pi.
$$
Clearly, $\b+1\ne\b-1$. Further, $\b-1+1=-\b+2$ implies $\b=1$, which contradicts to $b>1$. Therefore, $\d([\nu^{-\b+2}\rho,\nu^\b\rho])\t \nu^{\b-1}\rho$ is
irreducible. Using the inductive assumption, we continue similarly as in the previous case:
$$
\theta_s\h
 \nu^{\b-1}\rho \t \d([\nu^{-\b+2}\rho,\nu^\b\rho]) \r\pi
\cong 
 \nu^{\b-1}\rho \t \d([\nu^{-\b}\rho,\nu^{\b-2}\rho]) \r\pi.
$$
This implies that $\nu^{\b-1}\rho\o\nu^{\b-2}\rho\o\dots\o\nu^{-\b}\rho\o\pi$ is in
the Jacquet module of $\s$. The transitivity of Jacquet modules imply that
$\d([\nu^{-\b}\rho,\nu^{\b-1}\rho])\o\pi$ is also in the Jacquet module of $\theta_s$. One
concludes the irreducibility of
$\d(\D)\r\pi$ in the same way as above, using Lemma 4.4 of \cite{Moe-T}.

We end with the case  $\D=[\nu^\a \rho , \nu^\b\rho]$, where $\a,\b\in \mathbb R$, $\b-\a\in \mathbb Z_{>0}$ and $\rho$ is a unitarizable irreducible cuspidal representation of a general linear group which is not  $F'/F$-selfdual (then $\D$ satisfies the condition of the lemma). It is enough to prove the irreducibility of $\d(\D)\r\pi$ in the case  $\a+\b> 0$.  Let $\theta_s$ be an irreducible subrepresentation of $\d(\D)\r\pi$. Then
$$
\theta_s \hookrightarrow \d(\D)\r\pi
\h \nu^\b\rho\t\d([\nu^{\a}\rho,\nu^{\b-1}\rho])\r\pi
\cong \nu^\b\rho\t\d([\nu^{-\b+1}\check\rho,\nu^{-\a}\check\rho])\r\pi
$$
$$
\hskip40mm
\cong \d([\nu^{-\b+1}\check\rho,\nu^{-\a}\check\rho])\t\nu^\b\rho\r\pi
\cong \d([\nu^{-\b+1}\check\rho,\nu^{-\a}\check\rho])\t\nu^{-\b}\check\rho\r\pi.
$$
We conclude now the irreducibility of $\d(\D)$ in the same way as in the previous two cases.

The proof of the
lemma is now complete.
\end{proof}


\begin{thebibliography}{99}



\bibitem{A}
 Arthur, J.,
{\it  The Endoscopic Classification of
Representations: Orthogonal and
Symplectic Groups},  preprint (\url{http://www.claymath.org/cw/arthur/pdf/Book.pdf}).


\bibitem{Ban}
Ban, D.,
{\it  Parabolic induction and Jacquet modules of representations
of $O(2n,F)$},
   Glasnik Mat.
  34 (1999),
  pp.
 147-185.
 
 \bibitem{BJ}
  Ban, D. and Jantzen, C., {\it Degenerate principal series for even-orthogonal groups}, Represent. Theory 7 (2003), pp. 440Ð480.

\bibitem{B} Bernstein, J. (written by K. Rumelhart), {\it Draft of:
Representations of $p$-adic groups},  preprint.



\bibitem{B-D} Bernstein, J.,r\'edig\'e par Deligne, P., 
{\it Le ``centre" de Bernstein}, ``Repr\'esentations des Groupes 
 R\'eductifs sur un Corps
Local"  written by J.-N. Bernstein, P. Deligne, D. Kazhdan and M.-F. 
Vign\'eras, Hermann, Paris, 1984.

\bibitem{B-Z}Bernstein, J. and Zelevinsky, A.V., {\it   Induced representations of reductive $p$-adic 
groups I },  Ann. Sci. \'{E}cole Norm Sup.   10  (1977), pp. 441-472.


\bibitem{C-int}
 Casselman, W.,
{\it  Introduction to  the theory of admissible representations of $p$-adic
reductive groups},  preprint (\url{http://www.math.ubc.ca/~cass/research/pdf/p-adic-book.pdf}).

\bibitem{G}
Goldberg, D.,
{\it Reducibility of  induced representations for ${Sp}(2n)$ and
${SO}(n)$},
 Amer. J. Math.
  116 (1994),
pp. 1101-1151.

\bibitem{Han}
Hanzer, M., {\it The generalized injectivity conjecture for the classical $p$-adic groups}, 
Intern. Math. Research Notices  2010 (2010), pp. 195-237.

\bibitem{H}
Herb, R.,
{\it Elliptic representations for ${\rm Sp}(2n)$ and ${\rm SO}(n)$}, Pacific J. Math.  161(1993), pp. 347-358.



\bibitem{J1}
Jantzen, C., {\it   Degenerate principal series for symplectic and odd-orthogonal groups},  Mem.  Amer.
Math. Society   590  (1996),  pp. 1-100.

\bibitem{J2}Jantzen, C., {\it   On supports of induced representations for symplectic and odd-orthogonal
groups},  Amer. J. Math.   119  (1997), pp. 1213-1262.

  \bibitem{J3}
 Jantzen, C.,
{\it Tempered representations for classical $p$-adic groups}, preprint.

\bibitem{Jiang}
Jiang, D. and Soudry, D.,
{\it Generic representations and local Langlands reciprocity law for p-adic $SO(2n+1)$}
in "Contributions to automorphic forms, geometry, and number theory", 
Johns Hopkins Univ. Press, Baltimore, MD, 2004, pp. 457-519.


\bibitem{Moe-Ex} M\oe glin, C.,
{\it   Sur la classification des s\'eries discr\`etes des groupes
classiques p-adiques: para\-m\`etres de Langlands et exhaustivit\'e},
   J.  Eur. Math. Soc.   4 (2002),
pp.    143-200.

\bibitem{Moe-fin} 
 M\oe glin, C., {\it Points de r\'eductibilit\'e pour les induites de cuspidales}, J. Algebra 268 (2003), pp. 81-117.
 




\bibitem{Moe-Pac} M\oe glin, C.,
{\it
Classification et Changement de base pour les s\'eries discr\`etes des groupes unitaires $p$-adiques}, Pacific J. Math.   233 (2007), pp. 159-204.

\bibitem{Moe-mult-one} M\oe glin, C.,
 {\it Multiplicit\'e 1 dans  les paquets  d'Arthur  aux places p-adiques}, in "On certain $L$-functions", Clay Math. Proc. 13, Amer. Math. Soc., Providence, RI, 2011, pp. 333-374.

\bibitem{Moe-T}
 M{\oe}glin, C. and Tadi\'c, M.,
{\it  Construction of discrete series for classical $p$-adic groups},
  J. Amer. Math. Soc.
15 (2002),
pp.  715-786.

\bibitem{MVW}
 M{\oe}glin, C.; Vign\'eras, M.-F. and Waldspurger, J.-L.,
{\it Correspondances de Howe sur un corps $p$-adique},
  Lecture Notes in Math. 1291, Springer-Verlag,
 Berlin,
 1987.

\bibitem{MW}
 M{\oe}glin, C. and Waldspurger, J.-L.,
 {\it Sur le  transfert  des  traces   tordues   dÕun  group  lin\'eaire \`a un  groupe
 classique  p-adique},  Selecta  mathematica 12 (2006),  pp. 433-516.

\bibitem{Mu1} Mui\'c, G., {\it  On generic irreducible representations of $Sp(n,F)$ and $SO(2n+1,F)$},  Glasnik Mat.  33(53)  (1998), pp. 19-31.


\bibitem{Mu2} Mui\'c, G., {\it A proof of Casselman-Shahidi's conjecture for quasi-split classical groups}, Canad. Math. Bull. 44 (2001), pp. 298-312.

\bibitem{Mu4} Mui\'c, G., 
 {\it  Composition Series of Generalized Principal Series; The Case of Strongly Positive Discrete Series}, Israel J. Math 149 (2004), pp. 157-202.


\bibitem{Mu3} Mui\'c, G., 
{\it  Reducibility of Generalized Principal Series},    Canad. J. Math.
 57 (2005), pp.
  616-647.
  

  
  \bibitem{R-Wh} Rodier, F., {\it Whittaker models for admissible representations}, Proc. Sympos. Pure Math. AMS 26 (1983), pp. 425-430.
  
  \bibitem{Sh1}
Shahidi, F.,
{\it  A proof of Langlands conjecture on Plancherel
measures; complementary series for $p$-adic groups},
Ann. of Math.
132
(1990),
pp. 273-330.



\bibitem{Sh2}
Shahidi, F.,
{\it  Twisted endoscopy and reducibility of induced representations
for 
$p$-adic groups},
Duke Math. J.
66
(1992),
pp. 1-41.

\bibitem{Si}
Silberger, A.,
{\it  Special representations of reductive p-adic groups are not
integrable},
Ann. of Math.
111
(1980),
pp. 571-587.



\bibitem{T-Geo}
Tadi\'{c}, M.,
{\it Geometry of dual spaces of reductive  groups  (non-archimedean
case)},
J. Analyse Math.
51
(1988),
pp. 139-181.

\bibitem{T-Str}Tadi\'c, M.,
{\it  Structure arising from induction and Jacquet modules of
representations of classical
$p$-adic groups},
J. of Algebra 177
(1995),
pp. 1-33.

\bibitem{T-RPI}Tadi\'c, M.,
{\it  On reducibility of parabolic induction},
 Israel J. Math. 107 (1998), pp. 29-91.


\bibitem{T-seg} Tadi\'c, M.,
{\it  Square integrable representations of classical $p$-adic groups corresponding to
segments}, Represent. Theory
  3
  (1999),
 pp. 58-89.
 
\bibitem{T-Sing} Tadi\'c, M.,  {\it On classification of some classes of irreducible
representations of classical  groups}, 
in book 
Representations of real and $p$-adic groups,  
Singapore University Press and World Scientific,
Singapore, 2004, 95-162.

 
 \bibitem{T-CJM} Tadi\'c, M.,
{\it On reducibility and
unitarizability for classical $p$-adic groups, some general results}, 
 Canad. J. Math. 61 (2009), pp. 427-450.
 
% \bibitem{T-Ram}
%  Tadi\'c, M.,
%{\it  On automorphic duals and isolated 
%representations; 
%  new phenomena},  J. Ramanujan Math. Soc.
%25 (2010), pp.  295-328.

\bibitem{T-inv} Tadi\'c, M.,
{\it On invariants
of discrete series representations  of classical $p$-adic groups},  Manuscripta Math. 136 (2011), pp.. 417-435.

\bibitem{T-Tok}
Tadi\'c, M.,
{\it  Reducibility and discrete series 
in the case of
classical $p$-adic groups;
an approach based on  examples},  in "Geometry and
Analysis of Automorphic Forms of Several Variables", 
World
Scientific, Singapore, 2012, pp. 254-333.



\bibitem{T-harm}
Tadi\'c, M., {\em  On interactions between harmonic analysis and the theory of automorphic forms}, preprint (\url{http://www.hazu.hr/~tadic/50-harmonic-auto.pdf}).


 



\bibitem{W} 
Waldspurger, J.-L., {\it  La formule de Plancherel   pour les groupes 
p-adiques},  J. Inst. Math. Jussieu   2  (2003), pp. 235--333.

\bibitem{Z} Zelevinsky, A. V.,
{\it  Induced representations of reductive p-adic groups II. On
irreducible representations of GL(n)},
    Ann. Sci. \'{E}cole Norm. Sup.
  13 (1980),
pp. 165-210.






\end{thebibliography}
\end{document}